\documentclass[11pt,regno]{amsart}

\usepackage{amsmath}
\usepackage{amssymb,mathtools}
\usepackage[english]{babel}
\usepackage{enumerate}
\usepackage{tikz}
\usepackage[all]{xy}
\usepackage{geometry}
\usepackage{ulem}
\usepackage{multicol}
\usepackage{hyperref}
\hypersetup{
    colorlinks,%
    citecolor=blue,%
    filecolor=blue,%
    linkcolor=blue,%
    urlcolor=blue}

\usepackage[utf8]{inputenc}
\usepackage{tikz-cd}
\usepackage[T1]{fontenc}
\usepackage{amssymb}
\usepackage{graphicx}
\usepackage{colonequals}
\usepackage{multicol}
\usepackage{amsthm}
\usepackage{hyperref}
\newtheorem{thm}{Theorem}[section]
\newtheorem{lem}[thm]{Lemma}
\newtheorem{df}[thm]{Definition}

\newtheorem{prop}[thm]{Proposition}
\newtheorem{cor}[thm]{Corollary}

\newtheorem{rmk}[thm]{Remark}
\newtheorem{nt}[thm]{Notation}
\usepackage{enumitem}

\newtheorem*{thm*}{Theorem}

\newcommand{\OO}{\mathcal{O}}
\theoremstyle{definition}
\usepackage{amsaddr}
\usepackage{comment}

\newcommand{\CC}{\mathbb{C}}

\DeclareMathOperator{\Aut}{Aut}
\DeclareMathOperator{\Autgr}{Aut_{gr}}

\DeclareMathOperator{\Bir}{Bir}

\DeclareMathOperator{\Pic}{Pic}

\DeclareMathOperator{\im}{Im}
\DeclareMathOperator{\Sing}{Sing}
\DeclareMathOperator{\Proj}{Proj}

\definecolor{myorchid}{HTML}{a4538a}

\usepackage[english]{babel}

\usepackage{geometry}
\title{Automorphisms of hypersurfaces in polarized varieties}

\author{Ana Quedo}
\email{ana.quedo@gmail.com}

\begin{document}

\maketitle

\begin{abstract}
Let $F$ be a projective variety, let $H$ be a normally generated line bundle on $F$, and let $X\in |H^{\otimes d}|$ be a general projective variety. We establish a uniform criterion, formulated in terms of the graded Betti numbers of the section ring $R(F,H)$, that provides sufficient conditions for every automorphism of $X$ to extend to an automorphism of $F$. This extends the classical results of Matsumura--Monsky on automorphisms of hypersurfaces in projective spaces to a broader class of polarized varieties $(F,H)$, including rational homogeneous varieties of Picard number one, smooth Fano threefolds with their anticanonical polarization, and examples of hyperkähler and Calabi--Yau varieties.

\end{abstract}

\section{Introduction}

\subsection{The classical Matsumura--Monsky theorem}

The classical Matsumura--Monsky \cite{MatsumuraMonsky} theorem states that for a smooth hypersurface $X \subseteq \mathbb{P}^{n}$ with degree $d \geq 3$ and $n \geq 3$, $\Aut(X)$ can be identified with the stabilizer of $X$ in $\mathrm{PGL}(n+1)$,  with the sole exception of smooth quartic surfaces in $\mathbb{P}^3$. In particular, the study of automorphisms of $X$ reduces to the study of linear transformations of the ambient projective space preserving the defining equation of $X$.

They also proved that smooth quartic surfaces in $\mathbb{P}^3$ are the only hypersurfaces with degree $d \geq 3$ and $n \geq 3$ whose automorphism group may not be finite and that for a very general smooth hypersurface $X$, the automorphism group  $\operatorname{Aut}(X)$ is trivial.

Among its important applications, the Matsumura--Monsky theorem is one of the key ingredients in the construction, via Geometric Invariant Theory, of moduli spaces of smooth hypersurfaces of fixed degree; see, for instance, \cite[Chapter 5]{Mukai03}. It is also worth noting that in many cases one has $\Aut(X) = \Bir(X)$, hence the above result also provides  information about the birational geometry of hypersurfaces.

The classical extension result has also been generalized to complete intersections. Benoist \cite{Benoist} proved that any automorphism of a smooth complete intersection of dimension at least three in $\mathbb{P}^{n}$ extends to an automorphism of the ambient projective space. Similarly, the generic triviality of the automorphism group has recently been established for complete intersections under suitable dimension and degree bounds \cite{Automorphismandcohomology}.

The importance of the Matsumura--Monsky theorem lies not only in its consequences, but also in the questions it raises. For instance, one may ask which finite groups can occur as automorphism groups of smooth hypersurfaces. This question was studied for Klein hypersurfaces in \cite{Klein}. One may also ask for effective bounds on the order of the automorphism group; see, for example, \cite{EsserLi} and \cite{YangYuZhu2025}.

Another problem is to ask which automorphisms of smooth quartic surfaces are induced by birational transformations of $\mathbb{P}^3$. This question, known as Gizatullin's problem, has attracted considerable attention in recent years (see, for example, \cite{OguisoCremonaFields,Oguiso2,APZ,paivaquedo}). This problem can also be extended to the case where $F$ is a Fano threefold of Picard number 1 and $S$ is a smooth element of $|-K_F|$, as done in
\cite{AGP}.

It is also worth to understand to what extent similar phenomena hold for hypersurfaces in more general ambient varieties. Esser recently proved that every automorphism of a quasismooth hypersurface $X$ in a well-formed weight projective space $\mathbb{P}(a_1,\dots, a_n)$ can be extended to an automorphism of the ambient space if $\dim X \geq 3$ or $\dim X = 2$ and $\deg X \neq a_1 + \dots + a_n$, see \cite{EsserAutoWPS}.
Benedetti and Manivel proved an analogous result for hyperplane sections of generalized Grassmannians with Picard number one \cite{BenedettiManivel}.  These results suggest that the extension phenomenon is not intrinsically tied to projective space and motivate the following broader question:

\begin{quote}
To what extent does an automorphism of a smooth hypersurface in a more general ambient variety extend to an automorphism of the ambient variety?
\end{quote}

\subsection{Extending automorphisms of hypersurfaces in more general ambient spaces}

Let $F$ be a smooth projective variety and let $X\subset F$ be a smooth hypersurface. Denote by $\operatorname{Aut}(F;X)$ the subgroup of automorphisms of $F$ preserving $X$. We are interested in understanding when the natural restriction homomorphism
$\Aut(F;X) \to \Aut(X)$
is an isomorphism. When this happens, information about the automorphism group of $X$ can be obtained from the automorphism group of the ambient variety $F$, which is often better understood.

There are several settings in which this perspective is particularly promising. First, suppose that $F$ is an irreducible holomorphic symplectic variety (also called a hyperkähler variety). For such varieties, the group $H^2(F,\mathbb{Z})$ carries a natural lattice structure, and the automorphism group of $F$ can be described in terms of isometries of this lattice satisfying suitable additional conditions. If every automorphism of $X$ extends to $F$, then $\operatorname{Aut}(X)$ can likewise be studied through isometries of the lattice $H^2(F,\mathbb{Z})$. Moreover, if $X\subset F$ is a smooth hypersurface of dimension at least $3$, the Lefschetz hyperplane theorem gives $H^2(X,\mathbb{Z})\simeq H^2(F,\mathbb{Z}).$
Thus, in this setting, the automorphism group of $X$ can, in favorable cases, be studied in terms of the lattice $H^2(X,\mathbb{Z})$ itself.

 Another interesting class of ambient varieties is that of Fano varieties. Smooth Fano threefolds with infinite automorphism groups have been completely classified by Przyjalkovski, Chelstov and Shramov \cite{Fano3InftyAuto}. An extension theorem for automorphisms could therefore provide significant insight into $\operatorname{Aut}(X)$ when $X$ is a hypersurface in a Fano threefold. This may also aid in understanding the automorphism groups of Fano fourfolds, as many Fano threefolds naturally appear as hypersurfaces within them.
 
Finally, one may expect that many of the applications and questions raised by the classical Matsumura--Monsky theorem for hypersurfaces in $\mathbb{P}^n$ admit analogous formulations and answers for hypersurfaces in more general ambient varieties.

\subsection{Our results}
Our theorem applies to several classes of polarized varieties. We briefly describe some of the main examples.

\begin{enumerate}
\item \textbf{Fano threefolds (Theorem \ref{Fano}).}
Let $F$ be a normal factorial Fano threefold with terminal singularities, with $-\omega_F$ very ample, and that $F$ is not a quartic hypersurface in $\mathbb{P}^4$. Let $X$ be a very general projective variety $X \in |\omega_F^{-d}|$ with $d\geq 4$. Then, \[\operatorname{Aut}(X)
    \simeq
    \operatorname{Aut}(F;X)
    \simeq
    \operatorname{Aut}(\mathbb P^N;X);
    \]
where $i \colon F \hookrightarrow \mathbb{P}^N$ is the anticanonical embedding.

\item \textbf{Rational homogeneous varieties of Picard number one (Theorem \ref{rational}).}
Let $F$ be a rational homogeneous variety of Picard number one and dimension at least $3$, and let $H$ be the ample generator of $\operatorname{Pic}(F)$. This includes, for example, Grassmannians, Lagrangian Grassmannians $\mathrm{LG}(n,2n)$, and spinor varieties.  Then, let $X$ be a smooth projetive variety,  $X \in |H^{\otimes d}|$ for $d >2$, $X \notin |\omega_F^{-1}|$. Then,
$$\operatorname{Aut}(X)
    \simeq
    \operatorname{Aut}(F;X)
    \simeq
    \operatorname{Aut}(\mathbb P^N;X);$$
    
where $i \colon F \hookrightarrow \mathbb{P}^N$ is the embedding induced by $H$. This complements the result of Benedetti--Manivel \cite{BenedettiManivel}, which treats the case $d=1$ of hyperplane sections.

\item \textbf{Hyperkähler and Calabi--Yau varieties (Theorem \ref{trivialcanonical}).} Let $F$ be a smooth simply connected projective variety with trivial canonical bundle and $n:=\dim F \geq 3$ and $H$ a normally generated line bundle on $F$. For $d>n+2$, let $X \in |H^{\otimes d}|$ a smooth projective variety. Then
$$\Aut(X)\simeq \Aut(F;X) \simeq\Aut(\mathbb{P}^N;X),$$ 
 where $i \colon F \hookrightarrow \mathbb{P}^N$ is the embedding induced by the complete linear system $|H|$.  
For certain specific examples of hyperkähler and Calabi-Yau varieties this bond of $d>n+2$ can be improved.
For certain families of hyperkähler varieties of type $K3^{[2]}$, we obtain a sharper bound as a consequence of a recent work of Ortiz, Rojas, and Song \cite{ORS} (see Corollary \ref{hyperkahler}).
In the Calabi--Yau case, we carry out explicit computations for Wehler varieties (See Corollary \ref{Wehler2}).

\end{enumerate}

These applications show that the extension phenomenon discovered by Matsumura and Monsky for hypersurfaces in projective space is not an isolated feature of $\mathbb{P}^n$. Rather, it can be viewed as part of a broader principle: under suitable positivity, generation, and homological conditions on a polarized variety $(F,H)$, sufficiently general hypersurfaces in $F$ inherit their automorphisms from the ambient variety.

%To prove our results, we draw inspiration from the original proof of Matsumura--Monsky and from Esser's recent work. The original argument of Matsumura--Monsky relies on the fact that, for a hypersurface $X\subseteq \mathbb{P}^N$ defined by a homogeneous polynomial $f$ of degree $d\geq 3$, with $d\neq 4$, the polarization $\mathcal{O}_X(1)$ is preserved by every automorphism of $X$. Consequently, an automorphism of $X$ induces an automorphism of the section ring
%$$R(X, \OO_X(1)) \simeq \mathbb{C}[T_0, \dots, T_N]/(f)$$

%\noindent and, under the hypotheses of the Matsumura--Monsky theorem, such an automorphism lifts to an automorphism of the polynomial ring $\mathbb{C}[T_0,\ldots,T_N]$. This in turn induces an automorphism of the ambient projective space preserving $X$.

%Esser's strategy adapts this approach to weighted projective spaces, replacing the standard polynomial ring by a graded polynomial ring whose variables have different weights. Additional arguments are required in that setting to deal with the singularities of the ambient weighted projective space and, in particular, with the distinction between Cartier and Weil divisors.

Our approach is inspired by the original proof of Matsumura--Monsky and by Esser's recent work. The main difference is that we work with the section ring of a polarized variety, whereas their arguments are formulated in terms of the polynomial ring $\mathbb{C}[T_0, \dots, T_N]$ or its graded version.

Let $F$ be a projective variety and let $H$ be an ample line bundle on $F$. We consider the section ring $$R(F, H) = \bigoplus_{m\geq 0} H^0(F, H^{\otimes m})
.$$

%For a hypersurface $X\in |H^{\otimes d}|$, defined by a section $f\in H^0(F,H^{\otimes d})$, the section ring of $X$ is naturally related to the quotient of $R(F,H)$ by the element $f$. Under the assumptions of our main theorem, we obtain

\noindent In our case $R(F,H)$ is not a polynomial ring, but it is still a finitely generated $\mathbb{C}$-algebra, and as a consequence $R(F,H)$ can be written as $$R(F,H) \simeq \frac{\mathbb{C}[T_0, \dots, T_N]}{I}.$$

When $H$ is a normally generated line bundle, $\deg T_i=1$ and $I$ is a homogeneous ideal. Moreover, the degrees of a minimal homogeneous generating set of $I$ are homological invariants that can be computed through the graded Betti numbers of $R(F,H)$. In our problem, we will be particularly interested in the maximal degree of the generators of a minimal homogeneous generating set of $I$, that will be denoted by $t_1(R(F,H))$. Our main theorem is the following.

\begin{thm}[Theorem \ref{thm:main}]
 \label{thm:main}
 Let $F$ be a projective variety and $H$ normally generated line bundle on $F$ and $R(F,H)$ its section ring. Let $X \in |H^{\otimes d}|$ a projective variety, for $d >t_1(R(F,H))$. Suppose 
 \begin{enumerate}
     \item $H^1(F,H^{\otimes(m-d)})=0$ for all $m\geq0$,
     \item Every $g \in \Aut(X)$ preserves $H|_X$,
     \item $\Aut(F;X)\subseteq \Aut(F,H).$

 \end{enumerate}
 Then, $$\Aut(X)\simeq \Aut(F;X) \simeq\Aut(\mathbb{P}^N;X)$$
where $i_H \colon F \hookrightarrow \mathbb{P}^N$ is the embedding induced by the complete linear system $|H|$.

\end{thm}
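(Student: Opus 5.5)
The plan is to follow Matsumura--Monsky: turn an automorphism of $X$ into a graded automorphism of section rings and lift it to the polynomial ring. Write $R=R(F,H)=\mathbb{C}[T_0,\dots,T_N]/I$, with the $T_i$ of degree $1$ since $H$ is normally generated. Let $f\in H^0(F,H^{\otimes d})=R_d$ be the section cutting out $X$. The first step is to identify the section ring of $(X,H|_X)$ with $R/(f)$. From the exact sequence
\[
0\to H^{\otimes(m-d)}\xrightarrow{\cdot f} H^{\otimes m}\to H^{\otimes m}|_X\to 0,
\]
hypothesis (1), $H^1(F,H^{\otimes(m-d)})=0$, makes each restriction map $H^0(F,H^{\otimes m})\to H^0(X,H^{\otimes m}|_X)$ surjective. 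Hence $R(X,H|_X)\simeq R/(f)\simeq \mathbb{C}[T_0,\dots,T_N]/(I+(\tilde f))$, where $\tilde f$ is a degree-$d$ lift of $f$. In particular $H|_X$ is normally generated and $X\subset\mathbb{P}^N$ is linearly normal, embedded by $|H|_X|$.

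The second step handles $g\in\Aut(X)$. By hypothesis (2), $g^*(H|_X)\simeq H|_X$, so $g$ induces a graded $\mathbb{C}$-algebra automorphism $\phi$ of $R(X,H|_X)$, well defined up to scalars on each graded piece. Since $R(X,H|_X)_1=R_1=\mathbb{C}\langle T_0,\dots,T_N\rangle$, the degree-one part of $\phi$ is a linear map $A\in \mathrm{GL}(N+1)$. This $A$ induces a graded automorphism $\Phi$ of $\mathbb{C}[T]$ satisfying $\Phi(J)=J$, where $J=I+(\tilde f)$. Geometrically, $A$ is an element of $\mathrm{PGL}(N+1)$ preserving $X$, and we need to show it also preserves $F$, that is, $\Phi(I)=I$.

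The key step, and the main obstacle, is the degree argument. Since $d>t_1:=t_1(R)$, the minimal generators of $J$ are a minimal generating set of $I$, all of degree $\le t_1<d$, together with $\tilde f$ in degree $d$. We should check that $\tilde f\notin I$, which holds because $f\neq0$ in $R_d$, and that $\tilde f$ is not redundant modulo $I$ in degree $d$. Then for every $k<d$ we have $J_k=I_k$. Because $I$ is generated in degrees $\le t_1<d$, we get $I=(J_{\le t_1})=(J_{<d})$, the ideal generated by the elements of $J$ of degree less than $d$. This ideal is intrinsic to $J$, so $\Phi(J)=J$ and $\Phi$ preserves degrees give $\Phi(I)=I$. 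The care needed is precisely that the graded pieces below degree $d$ determine $I$; this is where the hypothesis $d>t_1(R(F,H))$ is used. Consequently $A$ preserves $F=V(I)$, and the induced automorphism $\hat g$ of $F$ preserves $X$ and restricts to $g$ on $X$, because $X$ spans $\mathbb{P}^N$ linearly and $A$ is determined by $g$ on $H^0(X,H|_X)$.

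Finally, we assemble the isomorphisms. The map $\Aut(\mathbb{P}^N;X)\to\Aut(X)$ is injective because $X\subset\mathbb{P}^N$ is nondegenerate, and the previous step shows it is surjective. The step $A\mapsto A|_F$ sends $\Aut(\mathbb{P}^N;X)$ into $\Aut(F;X)$. Conversely, hypothesis (3) says every element of $\Aut(F;X)$ preserves $H$, so it acts linearly on $H^0(F,H)$ and hence comes from $\Aut(\mathbb{P}^N;F)$ while preserving $X$. The restriction maps compose compatibly, giving $\Aut(X)\simeq\Aut(F;X)\simeq\Aut(\mathbb{P}^N;X)$. Injectivity of $\Aut(F;X)\to\Aut(X)$ follows because an element acting trivially on $X$ acts trivially on $H^0(X,H|_X)=H^0(F,H)$ up to scalar.
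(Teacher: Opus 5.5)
Your proposal is correct and follows essentially the paper's own proof. Step 1 is Proposition \ref{coxring}, your observation that $I=(J_{<d})$ is intrinsic to $J=I+(\tilde f)$ is exactly the degree argument of Proposition \ref{prop:lift2}, and hypothesis (3) is used as in the paper to realize elements of $\Aut(F;X)$ as linear maps preserving $X$. The only difference is cosmetic: you assemble the isomorphisms through restriction maps in $\mathrm{PGL}(N+1)$ and the nondegeneracy of $X$, whereas the paper routes them through the exact sequence of Proposition \ref{prop:gradedaut_autY}.
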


The proof of the theorem can be summarized in four steps.

\medskip

\noindent\textbf{Step 1.}
Let $X=V(f)\subset F$ be a hypersurface with $f\in H^0(F,H^{\otimes d})$. Under the hypotheses of the theorem, we prove that $$R(X, H|_X) \simeq R(F,H)/(f).$$

\medskip

\noindent\textbf{Step 2.}
Assume that $d> t_1(R(F,H))$.
We prove that every graded automorphism of $R(F,H)/(f)$ lifts to a graded automorphism of $R(F,H)$ preserving the ideal $(f)$. More precisely, if $$\Autgr(R(F,H), R(F,H)/(f))$$ denotes the group of graded automorphisms of $R(F,H)$ preserving $(f)$, then the natural map
$$ \Autgr(R(F,H), R(F,H)/(f)) \simeq \Autgr( R(F,H)/(f))$$
is an isomorphism.

\medskip

\noindent\textbf{Step 3.}
When $H$ is normally generated, the embedding of $F$ defined by $H$ is projectively normal. Consequently, $$ \Aut(F,H) \simeq \frac{\Autgr(R(F,H))}{\mathbb{C}^*},$$

\noindent where $\operatorname{Aut}(F,H)$ denotes the subgroup of automorphisms of $F$ preserving $H$.

\medskip

\noindent\textbf{Step 4.}
Finally, under the hypotheses of the theorem, every automorphism of $X$ preserves the polarization $H|_X$. Therefore, putting all the above steps together, we obtain: 
    $$ \Aut(X) \simeq \frac{\Autgr( R(F,H)/(f))}{\mathbb{C}^*} \simeq \frac{\Autgr(R(F,H), R(F,H)/(f))}{\mathbb{C}^*} \simeq \Aut(F,X).$$

The paper is organized as follows. In Section \ref{2}, we recall the notions of normally generated and normally presented line bundles, together with the commutative algebra results that will be used throughout the paper. In Section \ref{3}, we prove the lifting property for graded algebras that underlies Step 2. In Section \ref{4}, we establish the correspondence between graded automorphisms of the section ring and automorphisms of the polarized variety. Section \ref{5} is devoted to understanding under which hypotheses every automorphism $g\in\operatorname{Aut}(X)$ preserves the polarization $H|_X$. In Section \ref{sec.Extaut} we state and prove our main results, and in Section \ref{7}, we discuss their main applications.

\medskip

\textbf{Acknowledgements.} This project is one of the research outcomes of PRIN 2022, and I would like to express my sincere gratitude to the rest of the PRIN team for their support and encouragement throughout this process.

First and foremost, I would like to thank Antonella Grassi for suggesting this project to me and for bringing to my attention several vanishing theorems that helped me develop it. I am also grateful to Andrea Petracci for pointing out mistakes in my initial ideas and for his valuable comments. I would like to thank Concettina Galati for carefully reading the first drafts and for her insightful questions. I am especially grateful to all three of them for taking the time to listen to my ideas and for engaging in many fruitful discussions throughout the development of this project.

I would also like to express my gratitude to Carolina Araujo and Alex Massarenti for their comments, as well as their support on the final stages of this project.

This project was supported by funding from the European Union – NextGenerationEU under the National Recovery and Resilience Plan (PNRR), Mission 4 “Education and Research”, Component 2 “From Research to Business”, Investment 1.1, PRIN 2022 “Geometry of Algebraic Structures: Moduli, Invariants, Deformations”, DD No. 104, 2 February 2022, proposal code 2022BTA242, CUP J53D23003720006. During the final stages of the project, I was supported by CNPq (Conselho Nacional de Desenvolvimento Científico e Tecnológico), grant No. 200313/2026-6.

\section{Preliminaries}
\label{2}

Throughout this paper, the term projective variety refers to an integral projective scheme over $\mathbb{C}$.

\subsection{Normal generation and Normal presentation ($N_0$ and $N_1$ property)} 

We start by recalling the concepts of normally generated and normally presented line bundles, these concepts were introduced by Mumford in \cite{Mumford}, and we refer the reader to this paper for a more detailed exposition.

Consider $F$ a complex projective variety and
$B$ a very ample line bundle of $F$. In this case, we know that there is a morphism $\phi_B$ that associates each point $x \in F$ to a point in the projective space of hyperplanes in  $H^0(F, B)$, denoted here by $\mathbb{P}(H^0(F, B) )$.
\begin{align*} 
\phi_B \colon F &\to \mathbb{P}(H^0(F, B) )\\
x &\mapsto \phi_B(x):= \{s \in H^0(F, B)  \text{ | }  s(x)=0 \}.
\end{align*}

Considering a basis $s_0, \dots, s_n$ of  $H^0(F, B)$, we can write

\begin{align*} 
\phi_B \colon F &\to \mathbb{P}(H^0(F, B) )\\
x &\mapsto (s_0(x) \colon \dots : s_n(x))
\end{align*}

 Given a finite dimensional vector space $V$, we denote by $S^d(V)$ the $d^{th}$ symmetric power.

In our case, since $H^0(\mathbb{P}^n, \OO(1)) \simeq H^0(F,B)$, we have $$H^0( \mathbb{P}^n, \OO(d)) \simeq \CC[T_0, \dots, T_n]_d \simeq S^d(H^0(\mathbb{P}^n, \OO(1)) \simeq S^d(H^0(F,B))$$

Since $B \simeq \phi_B^* \OO(1)$, we have the following exact sequence

  \begin{equation} \label{FinPn}
      H^0(\mathbb{P}^n, \OO(d)) \rightarrow H^0(F, B^{\otimes d})\rightarrow H^1(\mathbb{P}^n, I_F(d)) 
  \end{equation}

Therefore, the map $$S^d(H^0(F,B)) \to H^0(F, B^{\otimes d})$$ is well-defined for every $d \geq 0$.

When this map is surjective for $d \geq 0$,
we say $F$ is \textbf{projectively normal} with respect to the embedding
$\phi_B \colon F \hookrightarrow \mathbb{P}^n$ and that the line bundle $B$
is \textbf{normally generated}.
In this case, the homogeneous coordinate $S(F)$ ring associated to the embedding $\phi_B$ is isomorphic to ring of sections of $B$, denoted here by $R(F,B)$. That is:

\[
S(F)
\;:=\;
\frac{\mathbb{C}[T_0,\dots,T_n]}
{\displaystyle \bigoplus_{d \geq 0} H^0\!\left(\mathbb{P}^n,\, I_F(d)\right)} \simeq {\displaystyle \bigoplus_{d \geq 0} H^0\!(F,\, B^{\otimes d})} =: R(F,B) .
\]

When $F$ is a normal variety, $R(F,B)$ is the integral closure of $S(F)$, see \cite[Chapter II, ex 5.14]{Hartshorne}. 

\begin{rmk}
\label{veryampleN0}    
By \ref{FinPn}, we notice that saying that $F$ is projectively normal with respect to $\phi_B$  is equivalent to requiring $H^1( \mathbb{P}^n, I_F(d))=0$ for all $d \geq 0$.
\end{rmk}

\begin{rmk}
    \label{rmk1}
We also notice that $R(F,B)$ being normally generated implies that the ring $R(F,B)$ is generated as a graded algebra by elements of degree 1.
\end{rmk}

We notice that the map $S^d(H^0(X,B)) \to H^0(X, B^{\otimes d})$ is still well-defined when $B$ is an ample line bundle, in fact, more generally we give the following definition.

\begin{df} Let $B$ be an ample line bundle on a complex projective variety $F$. We say $B$ is \textbf{normally generated} if 
$$S^d(H^0(X,B)) \to H^0(X, B^{\otimes d})  \text{ is surjective for every } d \geq 0.$$
\end{df}

\begin{rmk} \label{ngisvample}
If $B$ is normally generated, then $B$ is very ample. For a proof, see
\cite[pages 38,39]{Mumford}
\end{rmk}

In the next definition, we look to the surjectivity of other maps.

\begin{df}
  Let $B$ be a normally generated line bundle on a complex projective variety $F$.
   We say that $B$ is a \textbf{normally presented} line bundle if the following map is surjective for all $d\geq 2$  
  $$H^0(\mathbb{P}^n,\OO(d-2)) \otimes H^0(\mathbb{P}^n, I_F(2)) \to H^0(\mathbb{P}^{n}, I_F(d)).$$

\end{df}

\begin{rmk}
Notice that the above condition  implies that the ideal 
$$I_{*}:=\displaystyle \bigoplus_{d \geq 0} H^0\!\left(\mathbb{P}^n,\, I_F(d)\right)$$
is generated by homogeneous elements of degree 2 (quadrics). 
\end{rmk}

\begin{prop} \label{restrictionNG}
Let $B$ be a normally generated line bundle on a projective variety $F$. \begin{enumerate}
    \item Let $X \subset F$ be a subvariety such that the restriction maps $$H^0(F,B^{\otimes m}) \longrightarrow H^0(X,B|_X^{\otimes m})$$
    are surjective for every $m \geq 0$. Then $B|_X$ is normally generated.

    \item In particular, suppose that $X \in |B^{\otimes d}|$ is a divisor and
    \[
    H^1(F,B^{\otimes m-d})=0
    \qquad\text{for every }m\geq 0.
    \]
    Then $B|_X$ is normally generated.
\end{enumerate}
\end{prop}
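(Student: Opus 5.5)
For part (1), the plan is to factor the multiplication map for $B|_X$ through the one for $B$. For each $m\geq 0$ there is a commutative square
\[
\begin{tikzcd}
S^m(H^0(F,B)) \arrow[r] \arrow[d] & H^0(F,B^{\otimes m}) \arrow[d] \\
S^m(H^0(X,B|_X)) \arrow[r] & H^0(X,B|_X^{\otimes m}).
\end{tikzcd}
\]
Here the left vertical arrow is $S^m$ of the restriction in degree one, and the horizontal arrows are the multiplication maps. The square commutes because restriction of sections is a ring homomorphism $R(F,B)\to R(X,B|_X)$. The top arrow is surjective because $B$ is normally generated, and the right arrow is surjective by hypothesis. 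Hence the composite $S^m(H^0(F,B))\to H^0(X,B|_X^{\otimes m})$ is surjective. Since this composite equals the bottom arrow precomposed with the left arrow, the bottom arrow is surjective as well. That is exactly normal generation of $B|_X$.

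One should also note that $B|_X$ is ample, as required by the definition. This holds because it is the restriction of an ample (indeed very ample, by Remark \ref{ngisvample}) line bundle to a closed subvariety.

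For part (2), the only work is to verify the surjectivity hypothesis of (1). Since $X\in|B^{\otimes d}|$ is an effective Cartier divisor, its ideal sheaf is $\mathcal{O}_F(-X)\simeq B^{\otimes -d}$. This gives the exact sequence
\[
0\to B^{\otimes(m-d)}\to B^{\otimes m}\to B|_X^{\otimes m}\to 0 .
\]
Tensoring the ideal-sheaf sequence with the line bundle $B^{\otimes m}$ preserves exactness. Taking cohomology gives
\[
H^0(F,B^{\otimes m})\to H^0(X,B|_X^{\otimes m})\to H^1(F,B^{\otimes(m-d)}).
\]
The last group vanishes by assumption, so the restriction maps are surjective for all $m\geq0$, and part (1) applies.

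There is no real obstacle. The one point to state carefully is the commutativity of the square, that is, that restriction is compatible with multiplication of sections. This is immediate, since restriction is a morphism of graded rings.
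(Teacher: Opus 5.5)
Your proposal is correct and follows the paper's proof: part (1) uses the same commutative square, and part (2) uses the same twisted ideal-sheaf sequence together with the vanishing of $H^1(F,B^{\otimes(m-d)})$. Your argument through the composite is slightly tidier than the paper's, since it does not require the left vertical arrow to be surjective, and you also correctly check that $B|_X$ is ample, as the definition of normal generation requires.
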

\begin{proof}
For the first part, we have that since
the top and vertical maps of the diagram below are surjective, hence the bottom map is surjective.
\[
\begin{tikzcd}
S^m H^0(F,B)
    \arrow[r]
    \arrow[d, two heads]
&
H^0(F,B^{\otimes m})
    \arrow[d, two heads]
\\
S^m H^0(X,B|_X)
    \arrow[r]
&
H^0(X,B|_X^{\otimes m})
\end{tikzcd}
\]

For the second statement, the exact sequence
\[
0\longrightarrow B^{\otimes(m-d)}
\longrightarrow B^ {\otimes m}
\longrightarrow B|_X^ {\otimes m}
\longrightarrow 0
\]
and the vanishing
\[
H^1(X,B^{\otimes (m-d)})=0
\]
implies that
\[
H^0(F,B^{\otimes m})\longrightarrow H^0(X,B|_X ^{\otimes m})
\]
is surjective for every $m\geq 1$. The first statement therefore applies.
\end{proof}

\subsection{Graded Betti numbers and minimal graded free resolutions}
In this subsection, we introduce some notions of commutative algebra needed for formulating our results. We based our exposition in
\cite[Sections 1.10, 20.1, 20.5]{eisenbud1995commutative} and \cite[Section 1B]{Eisenbud2005}.

Let $\mathbb{C}[T_0, \dots, T_N]=:R$ and $R/I$ be a finitely generated algebra. The Hilbert Syzygies Theorem (see \cite[Theorem 1.13, Theorem 20.2]{eisenbud1995commutative}) guarantees the existence of a minimal unique graded free resolution (up to isomorphism) at most $N$ steps.

$$0 \to F_p \to \cdots \to F_2 \to F_1 \to R \to R/I \to 0,
\quad \text{where } F_i = \bigoplus_j R(-j )^{\beta_{i,j}(R/I)} $$

\begin{df}
The integers $\beta_{i,j}(R/I)$ are called \textbf{graded Betti numbers}.
\end{df}

\begin{rmk}
The graded Betti numbers are homological invariants of $R/I$ and do not depend on the choice of the minimal free resolution. In particular,
$\beta_{1,j}(R/I)$ records the number of minimal generators of $I$ of degree 
$j$ (for a proof, see \cite[Proposition 1.7]{Eisenbud2005}).
\end{rmk}

\begin{df} \label{t0}

The largest degree among the elements of a minimal homogeneous generating set of 
$I$ is an invariant of $R/I$, defined in the following way
 $$t_1(R/I)=\max \{j \mid\beta_{1,j}(R/I)\neq0\}.$$
\end{df}

A more famous and easier invariant to compute is the \textbf{Castelnuovo--Mumford regularity}
\[\operatorname{reg}(R/I)
=
\max \{\, j - i \mid \beta_{i,j}(R/I) \neq 0 \,\}.
\]

It follows immediately from the definitions that $t_1(R/I) \leq \operatorname{reg}(R/I) +1$.

\subsection{Castelnuovo--Mumford regularity for sheaves}
\begin{df}
Let $\mathcal{F}$ be a coherent sheaf on $\mathbb{P}^n$. We say that $\mathcal{F}$ is
\textbf{$m$-regular} if
\[
H^i\bigl(\mathbb{P}^n,\mathcal{F}(m-i)\bigr)=0
\qquad\text{for all } i>0.
\]
The \textbf{Castelnuovo--Mumford regularity} of $\mathcal{F}$ is defined by
\[
\operatorname{reg}(\mathcal{F})
=
\min\left\{
m\in\mathbb{Z}
\;\middle|\;
H^i\bigl(\mathbb{P}^n,\mathcal{F}(m-i)\bigr)=0
\text{ for all } i>0
\right\}.
\]
\end{df}
The following proposition is an exercise of \cite[Chapter 4E]{Eisenbud2005}.
\begin{prop} \label{reg0} \cite[4E, Exercise 3]{Eisenbud2005}
    Suppose $X \subset \mathbb{P}^N$ is a projective scheme and $N> 0$. Then,
    $$\operatorname{reg}(\mathcal{I}_X) =
\operatorname{reg}(I_X) = 1+\operatorname{reg}(\mathcal{O}_X) = 1+ \operatorname{reg}(S(X)).$$
\end{prop}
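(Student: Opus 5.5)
The plan is to pass through local cohomology with respect to the irrelevant ideal $\mathfrak m=(T_0,\dots,T_N)$ of $S:=\mathbb{C}[T_0,\dots,T_N]$, and to use the ideal-sheaf sequence to shift regularity by one. Here $I_X\subseteq S$ is the saturated homogeneous ideal of $X$, $\mathcal{I}_X=\widetilde{I_X}$, and $S(X)=S/I_X$. Throughout I use the module version of regularity,
\[
\operatorname{reg}(M)=\max\{\,j-i \mid \beta_{i,j}(M)\neq 0\,\}=\max_{i}\{\,\mathrm{end}(H^i_{\mathfrak m}(M))+i\,\},
\]
where $\mathrm{end}$ is the top nonzero degree. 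I take the second equality (Eisenbud's local cohomology characterization of regularity) as known.

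\textbf{Step 1: $\operatorname{reg}(I_X)=1+\operatorname{reg}(S(X))$.} Take a minimal graded free resolution of $S(X)$,
\[
\cdots\to F_2\to F_1\to S\to S(X)\to 0 .
\]
Truncating the first term gives a minimal resolution of $I_X$ with $\beta_{i,j}(I_X)=\beta_{i+1,j}(S(X))$. The term $F_0=S$ only contributes $j-i=0$, and $\operatorname{reg}(I_X)\ge 1$ whenever $I_X\neq 0$. Hence
\[
\max_{i,j}\{\,j-i\,\}\ \text{for}\ I_X\;=\;1+\max_{i,j}\{\,j-i\,\}\ \text{for}\ S(X).
\]
If $I_X=0$, i.e.\ $X=\mathbb{P}^N$, both sides are read with the usual convention.

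\textbf{Step 2: sheaf versus module, via local cohomology.} For a finitely generated graded module $M$ there are the exact sequence
\[
0\to H^0_{\mathfrak m}(M)\to M\to \textstyle\bigoplus_k H^0(\widetilde M(k))\to H^1_{\mathfrak m}(M)\to 0
\]
and the isomorphisms $H^{i+1}_{\mathfrak m}(M)_k\simeq H^i(\widetilde M(k))$ for $i\ge1$.

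Since $I_X$ is saturated and $\bigoplus_k H^0(\mathcal{I}_X(k))=I_X$, we get $H^0_{\mathfrak m}(I_X)=H^1_{\mathfrak m}(I_X)=0$. Comparing the vanishing ranges then shows
\[
\operatorname{reg}(I_X)=\operatorname{reg}(\mathcal{I}_X).
\]
This gives the first equality of the proposition.

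\textbf{Step 3: $\operatorname{reg}(\mathcal{I}_X)=1+\operatorname{reg}(\mathcal{O}_X)$.} Use $0\to\mathcal{I}_X\to\mathcal{O}_{\mathbb{P}^N}\to\mathcal{O}_X\to0$. Because $N>0$, $H^i(\mathcal{O}_{\mathbb{P}^N}(k))=0$ for $0<i<N$ and all $k$, and $H^N(\mathcal{O}_{\mathbb{P}^N}(k))=0$ for $k\ge -N$. The long exact sequence therefore gives
\[
H^i(\mathcal{O}_X(m-i))\simeq H^{i+1}(\mathcal{I}_X(m+1-(i+1)))\qquad (i\ge1),
\]
in the relevant range $m\ge0$.

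The remaining condition for $(m+1)$-regularity of $\mathcal{I}_X$ is $H^1(\mathcal{I}_X(m))=0$, i.e.\ surjectivity of $S_m\to H^0(\mathcal{O}_X(m))$. This is precisely $H^1_{\mathfrak m}(S(X))_m=0$, which Step 2 also controls. So the identification holds once $\operatorname{reg}(\mathcal{O}_X)$ is understood to include this surjectivity. Equivalently, $\operatorname{reg}(\mathcal{O}_X)$ coincides with $\operatorname{reg}(S(X))$ computed via $H^1_{\mathfrak m}$ and the higher $H^{i+1}_{\mathfrak m}(S(X))_k\simeq H^i(\mathcal{O}_X(k))$, the term $H^0_{\mathfrak m}(S(X))$ being zero by saturation.

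\textbf{Main obstacle.} I expect the hard part to be this bookkeeping in Step 3. One issue is the $H^1$/projective-normality term, which is not visible in the naive sheaf definition of regularity for $\mathcal{O}_X$. The other is the edge cases: the top cohomology $H^N(\mathcal{O}_{\mathbb{P}^N}(m-N))$ for small or negative $m$, and the degenerate cases $I_X=0$ or $\dim X=0$. I would first prove everything for $m\ge0$, then check separately that all the regularities involved are $\ge0$ (resp.\ $\ge1$ for $I_X$), so that these edge ranges never affect the minimum.
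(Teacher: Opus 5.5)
Your Steps 1 and 2 are sound. Shifting the minimal resolution gives $\operatorname{reg}(I_X)=1+\operatorname{reg}(S(X))$ for $\emptyset\neq X\subsetneq\mathbb{P}^N$. Since $I_X$ is saturated, $H^0_{\mathfrak m}(I_X)=H^1_{\mathfrak m}(I_X)=0$, so the local cohomology description of regularity, together with Mumford's lemma, yields $\operatorname{reg}(I_X)=\operatorname{reg}(\mathcal{I}_X)$. (The paper gives no argument of its own, only the citation to Eisenbud.)

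The gap is Step 3, and it is not a bookkeeping issue. You correctly observe that $(m+1)$-regularity of $\mathcal{I}_X$ needs, besides $m$-regularity of $\mathcal{O}_X$, the vanishing $H^1(\mathcal{I}_X(m))=0$, i.e.\ surjectivity of $S_m\to H^0(\mathcal{O}_X(m))$. You then declare $\operatorname{reg}(\mathcal{O}_X)$ ``understood to include this surjectivity''. That changes the definition rather than proving anything. With the sheaf regularity defined in the paper, $\operatorname{reg}(\mathcal{O}_X)$ only sees $H^i(\mathcal{O}_X(m-i))$ for $i>0$ and is blind to $H^1_{\mathfrak m}(S(X))$. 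What your computation actually establishes is
\[
\operatorname{reg}(\mathcal{I}_X)=1+\operatorname{reg}(S(X))=1+\max\bigl\{\operatorname{reg}(\mathcal{O}_X),\ \operatorname{end}\bigl(H^1_{\mathfrak m}(S(X))\bigr)+1\bigr\},
\]
where $H^1_{\mathfrak m}(S(X))_k=\operatorname{coker}\bigl(S_k\to H^0(\mathcal{O}_X(k))\bigr)$. The second term genuinely matters, so your assertion that $\operatorname{reg}(\mathcal{O}_X)$ ``coincides with'' $\operatorname{reg}(S(X))$ is false in general.

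Here are two counterexamples:
\begin{enumerate}
\item Let $C\subset\mathbb{P}^3$ be the smooth rational quartic, the image of $\mathbb{P}^1$ under $[s^4:s^3t:st^3:t^4]$. Then $\mathcal{O}_C(k)\simeq\mathcal{O}_{\mathbb{P}^1}(4k)$, so $\operatorname{reg}(\mathcal{O}_C)=1$. But $H^1(\mathcal{I}_C(1))\simeq\mathbb{C}$, which gives $\operatorname{reg}(\mathcal{I}_C)=3\neq 2$.
\item For a single point, $\mathcal{O}_X$ is $m$-regular for every $m\in\mathbb{Z}$, while $\operatorname{reg}(\mathcal{I}_X)=1$.
\end{enumerate}

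So the third equality of the proposition cannot be proved without an extra hypothesis. By the display above, it holds precisely when $S_k\to H^0(\mathcal{O}_X(k))$ is surjective for all $k\geq\operatorname{reg}(\mathcal{O}_X)$. This is the case, for instance, when $X$ is integral, positive-dimensional and projectively normal. Indeed, then $\operatorname{reg}(\mathcal{O}_X)\geq 0$, because $\mathcal{O}_X(m)$ is globally generated for $m\geq\operatorname{reg}(\mathcal{O}_X)$ while $H^0(\mathcal{O}_X(m))=0$ for $m<0$. This is exactly the situation in which the paper later uses the statement, in Proposition~\ref{reg}.

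Your closing plan of restricting to $m\geq 0$ and checking signs handles the $H^N(\mathcal{O}_{\mathbb{P}^N}(m-N+1))$ edge term, but does nothing about the $H^1$ term. You should state and prove this corrected version rather than redefine $\operatorname{reg}(\mathcal{O}_X)$.
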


\section{Lifting graded algebra automorphisms}
\label{3}
 In this section and throughout the rest of the paper paper, we denote by
$\mathbb{C}[T_0, \dots, T_N]$ the ring of polynomials with standard grading. Considering a finitely generated graded algebra
$L \simeq \mathbb{C}[T_0, \dots, T_N]/I$, and $f$ an homogeneous element of degree $d$,
our aim in this section is to study the
lifting of graded automorphisms of $L/(f)$ to $L$.

\begin{nt}Let $A$ be a finitely generated graded $\CC$-algebra. We denote by 
$\Autgr(A)$ the group of isomorphisms of $A$ as a $\CC$-graded algebra. Moreover, given  a graded ideal $I \subseteq A$, we denote by

$$\Autgr(A, A/I):= \{\phi' \in \Autgr(A) \mid \phi'(I)=I\}.$$
\end{nt}

\begin{nt}Let $A$ and $B$ be a graded rings and $\alpha: A \to B$ a graded homomorphism.
We denote by $\alpha_m \colon A_m \to B_m$, the degree $m$ component of $\alpha$.

\end{nt}

\begin{lem} \label{lem2} Let $A$ be a finitely generated graded $\CC$-algebra and $I \subset A$ a graded ideal. If every graded automorphism of $A/I$ lifts uniquely to a graded automorphism of $A$,
then
$$\Autgr(A/I) \simeq \Autgr(A, A/I).$$
    
\end{lem}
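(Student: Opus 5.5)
The plan is to write down the natural restriction map explicitly and check that it is a bijective group homomorphism. Let $\pi \colon A \to A/I$ be the quotient map. Every $\phi' \in \Autgr(A, A/I)$ satisfies $\phi'(I) = I$, so it descends to a graded endomorphism $\bar\phi'$ of $A/I$ with $\bar\phi' \circ \pi = \pi \circ \phi'$. We also need $(\phi')^{-1}(I) = I$; this is immediate from $\phi'(I)=I$ and the bijectivity of $\phi'$. Hence $(\phi')^{-1}$ descends as well, and its reduction is inverse to $\bar\phi'$, so $\bar\phi' \in \Autgr(A/I)$. This gives a map
$$\rho \colon \Autgr(A, A/I) \to \Autgr(A/I), \qquad \phi' \mapsto \bar\phi'.$$
Uniqueness of the descended map, which holds because $\pi$ is surjective, shows $\overline{\phi'\psi'} = \bar\phi'\bar\psi'$. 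So $\rho$ is a group homomorphism. Note also that $\Autgr(A,A/I)$ really is a subgroup of $\Autgr(A)$, by the same inverse argument.

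For surjectivity, I take $\phi \in \Autgr(A/I)$. By hypothesis it lifts to a graded automorphism $\phi'$ of $A$, meaning $\pi \circ \phi' = \phi \circ \pi$. From this, $\phi'(I) \subseteq \ker \pi = I$, since $\pi(\phi'(I)) = \phi(\pi(I)) = 0$. For the reverse inclusion I apply the same reasoning to a lift $\psi'$ of $\phi^{-1}$, which gives $\psi'(I) \subseteq I$. Then $\phi'\psi'$ lifts the identity, and so does $\mathrm{id}_A$, so uniqueness of lifts gives $\phi'\psi' = \mathrm{id}_A$ and hence $\psi' = (\phi')^{-1}$. Therefore $(\phi')^{-1}(I) \subseteq I$, which yields $\phi'(I) = I$. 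So $\phi' \in \Autgr(A, A/I)$ and $\rho(\phi') = \phi$.

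For injectivity, suppose $\rho(\phi') = \mathrm{id}_{A/I}$. Then $\phi'$ and $\mathrm{id}_A$ are both lifts of the identity, and uniqueness forces $\phi' = \mathrm{id}_A$. I expect no real obstacle here. The only subtle point is that the hypothesis as stated only gives a lift in $\Autgr(A)$, not necessarily one preserving $I$. The key step is therefore showing $\phi'(I) = I$ rather than just $\phi'(I) \subseteq I$, and the argument above uses uniqueness to identify the lift of $\phi^{-1}$ with $(\phi')^{-1}$. Alternatively, one could argue degreewise: each $I_m$ is finite-dimensional and $\phi'_m$ is injective, so $\phi'(I_m) \subseteq I_m$ already forces equality.
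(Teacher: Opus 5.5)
Your proof is correct and is essentially the paper's argument seen from the other side. The paper defines the lifting map $\alpha\colon \Autgr(A/I)\to\Autgr(A,A/I)$ and checks injectivity using surjectivity of $\kappa$, whereas you work with its inverse, the descent map $\rho$, using existence of lifts for surjectivity and uniqueness for injectivity. Your explicit check that the lift satisfies $\phi'(I)=I$ fills in what the paper leaves as ``one can verify''; this also follows directly from injectivity of $\phi$, since $(\phi')^{-1}(I)=\{a\in A : \phi(\pi(a))=0\}=I$.
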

\begin{proof}
 We define the map \begin{align*}
\alpha \colon \Autgr(A/I) &\to \Autgr(A,A/I) \\
    \phi &\mapsto \alpha (\phi)
\end{align*}
where $\alpha (\phi)$ is the lift of $\phi$. From the hypotheses, one can verify that the map $\alpha$ is a well-defined surjective group homomorphism. Denoting by $\kappa \colon A \to A/I$ the quotient homomorphism, the lifting property implies that $ \phi \circ \kappa = \kappa \circ \alpha(\phi) $. Since $\kappa$ is surjective, every element of $A/I$ can be written as $\kappa(g)$ for some $g$ in $A$, therefore if $\alpha(\phi)= id_{A}$, we have $\phi( \kappa(g))= \kappa(g)$ for every $g \in A$, which implies \(\phi=\operatorname{id}_{A/I}\). Hence,
$\ker\alpha=\{\operatorname{id}_{A/I}\}$ and we conclude our proof.

\end{proof}
\begin{prop}

\label{prop:lift1}
Let $L := \mathbb{C}[T_0,\ldots,T_N]/I$ be a finitely generated graded algebra with $\beta_{1,1}(L)=0$. Then, any graded algebra homomorphism $\phi \colon L \to L$ admits a unique lift to a homomorphism $\phi' \colon \mathbb{C}[T_0,\dots,T_N] \to \mathbb{C}[T_0,\dots,T_N].$ Hence, $$\Autgr(\mathbb{C}[T_0,\dots,T_N], L) \simeq \Autgr(L).$$
\end{prop}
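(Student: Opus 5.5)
The plan is to exploit that $\beta_{1,1}(L)=0$ means $I$ contains no nonzero linear forms, i.e. $I_1=0$ (minimal generators of $I$ of degree $1$ would span $I_1$, since $I_0=0$ for $L$ a graded algebra with $L_0=\mathbb{C}$). Consequently the quotient map $\kappa\colon \mathbb{C}[T_0,\dots,T_N]\to L$ restricts to an isomorphism of vector spaces $\kappa_1\colon \mathbb{C}[T_0,\dots,T_N]_1 \to L_1$. I will use this to define the lift in degree one and then extend multiplicatively.

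Concretely, given a graded homomorphism $\phi\colon L\to L$, I take its degree-one component $\phi_1\colon L_1\to L_1$ and set $\phi'_1:=\kappa_1^{-1}\circ\phi_1\circ\kappa_1$, a linear endomorphism of the space of linear forms. Since $\mathbb{C}[T_0,\dots,T_N]$ is free on $T_0,\dots,T_N$, there is a unique graded $\mathbb{C}$-algebra homomorphism $\phi'$ with $\phi'(T_i)=\phi'_1(T_i)$. The identity $\kappa\circ\phi'=\phi\circ\kappa$ holds on the generators $T_i$ by construction, and both sides are algebra homomorphisms, so it holds everywhere. Uniqueness: any graded lift $\psi$ with $\kappa\circ\psi=\phi\circ\kappa$ satisfies $\kappa_1\circ\psi_1=\phi_1\circ\kappa_1$, forcing $\psi_1=\phi'_1$ since $\kappa_1$ is injective; a homomorphism of the polynomial ring is determined by its values on the $T_i$, so $\psi=\phi'$. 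From $\kappa\circ\phi'=\phi\circ\kappa$ we also get $\phi'(I)\subseteq I$.

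For automorphisms: if $\phi$ is a graded automorphism, then $\phi_1$ is bijective, hence $\phi'_1$ is invertible and $\phi'$ is a graded automorphism of the polynomial ring (a linear change of variables). Applying the construction to $\phi^{-1}$ yields $(\phi^{-1})'$, and by uniqueness $(\phi^{-1})'\circ\phi'$ and $\phi'\circ(\phi^{-1})'$ are lifts of the identity, hence equal to the identity. Thus $\phi'(I)\subseteq I$ and $\phi'^{-1}(I)\subseteq I$, so $\phi'(I)=I$ and $\phi'\in\Autgr(\mathbb{C}[T_0,\dots,T_N],L)$. Then Lemma \ref{lem2}, with $A=\mathbb{C}[T_0,\dots,T_N]$, gives $\Autgr(\mathbb{C}[T_0,\dots,T_N],L)\simeq\Autgr(L)$; surjectivity there is immediate since any $\phi'$ preserving $I$ descends to $L$.

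The only delicate point is the first step: translating $\beta_{1,1}(L)=0$ into $I_1=0$, which needs $I\subseteq(T_0,\dots,T_N)$ (true as $L_0=\mathbb{C}$) so that $I_1$ consists of minimal generators; everything else is formal.
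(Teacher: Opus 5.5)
Your proposal is correct and follows essentially the same route as the paper: use $I_1=0$ to make $\kappa_1$ an isomorphism, define $\phi'$ on the variables by $\kappa_1^{-1}\circ\phi_1\circ\kappa_1$, extend via the universal property of the polynomial ring, handle automorphisms through $\phi^{-1}$, and finish with Lemma \ref{lem2}. Your explicit checks of uniqueness among graded lifts and of $\phi'(I)=I$ spell out details the paper leaves implicit.
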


\begin{proof}
Let $\kappa \colon \CC[T_0, \dots T_N] \to L$ be the graded quotient homomorphism. We recall that the condition $\beta_{1,1}(L)=0$ implies that the degree-1 component of $I$ is zero, i.e. 
$I_1=0$, hence the degree one component of $\kappa$, $\kappa_1 \colon \CC[T_0, \dots T_N]_1 \to L_1$ is an isomorphism.
Then, for each variable $T_i$, we can define $$\phi'(T_i)=: \kappa_{1}^{-1} ( \phi_{1} ( \kappa_{1} (T_i)))\in \CC[T_0, \dots, T_N]_{1}.$$
The universal property of polynomial rings then gives a unique extension $\phi'$. If $\phi$ is an isomorphism, the same applies to its inverse, so $\phi'$ is an isomorphism. The last assertion is a direct consequence of Lemma \ref{lem2}.
\end{proof}

The idea of the next proposition is simple, first we lift $\phi \colon L/(f) \to L/(f)$ to the ambient polynomial ring using Proposition \ref{prop:lift1}; then we show this lift descends to 
$L$ by checking it kills the generators of 
$I$ using that
$L_{d_j} \xrightarrow{\pi_{d_j}} (L/(f))_{d_j}$ is an isomorphism when $d_j < \deg f$.

\begin{prop} \label{prop:lift2}
Let $L:=\mathbb{C}[T_0,\ldots,T_N]/I$ be a finitely generated graded algebra with $\beta_{1,1}(L)=0$.  Let \(f \in L\) be a homogeneous element of degree $d$ with
$ d > t_1(L)$.  Then, any homomorphism of graded algebras $\phi \colon L/(f) \to L/(f)$ admits a unique
lift to a homomorphism of graded algebras $ \phi' \colon L \to L$.
Therefore, $$\Autgr( \CC[T_0, \dots, T_N], L/f) \simeq \Autgr(L, L/f) \simeq \Autgr(L/f).$$

\end{prop}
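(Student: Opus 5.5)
Write $P:=\mathbb{C}[T_0,\ldots,T_N]$, $\kappa\colon P\to L$ for the quotient map and $\pi\colon L\to L/(f)$ for the projection. Choose a homogeneous lift $\tilde f\in P_d$ of $f$. Then $L/(f)\simeq P/J$ with $J:=I+(\tilde f)$.

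\textbf{Step 1: apply Proposition \ref{prop:lift1} to $L/(f)$.} Since $d>t_1(L)\ge 1$, we have $d\ge 2$. Hence $J_1=I_1=0$, so $\beta_{1,1}(L/(f))=0$ and Proposition \ref{prop:lift1} applies to $L/(f)$. A graded homomorphism $\phi\colon L/(f)\to L/(f)$ therefore lifts uniquely to a graded endomorphism $\phi'$ of $P$ with $\pi\kappa\circ\phi'=\phi\circ\pi\kappa$. Uniqueness of a lift $L\to L$ is immediate: $L$ is generated in degree $1$, and $\pi_1\colon L_1\to (L/(f))_1$ is an isomorphism, so any lift is determined on generators.

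\textbf{Step 2: show $\phi'(I)\subseteq I$.} This is the main point. By definition of $t_1(L)$, $I$ is generated by homogeneous $g_1,\dots,g_r$ of degrees $d_j\le t_1(L)<d$. Fix $j$. The element $\phi'(g_j)$ is homogeneous of degree $d_j$ and maps to $\phi(\pi\kappa(g_j))=0$ in $L/(f)$, so $\kappa(\phi'(g_j))\in (f)_{d_j}$. Now $(f)_{m}=f\cdot L_{m-d}=0$ for $m<d$, i.e. $\pi_{d_j}\colon L_{d_j}\to (L/(f))_{d_j}$ is injective. Hence $\kappa(\phi'(g_j))=0$, that is, $\phi'(g_j)\in I$. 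It follows that $\phi'$ descends to a graded endomorphism $\bar\phi$ of $L$ lifting $\phi$.

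\textbf{Step 3: $\bar\phi$ preserves $(f)$.} We have $\pi\bar\phi(f)=\phi(\pi f)=0$, so $\bar\phi(f)\in(f)$ and therefore $\bar\phi((f))\subseteq(f)$.

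\textbf{Step 4: automorphisms.} Suppose $\phi$ is an automorphism. Lifting $\phi^{-1}$ gives $\psi$, and by uniqueness of lifts $\bar\phi\circ\psi$ and $\psi\circ\bar\phi$ are lifts of the identity, hence equal to $\mathrm{id}_L$. So $\bar\phi$ is a graded automorphism with $\bar\phi((f))\subseteq(f)$ and $\bar\phi^{-1}((f))\subseteq(f)$, which gives $\bar\phi((f))=(f)$. The same argument on $P$ shows that $\phi'$ is an automorphism preserving $J$.

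Applying Lemma \ref{lem2} to both $L\to L/(f)$ and $P\to L/(f)$ now yields
\[\Autgr(P,L/f)\simeq \Autgr(L/f)\simeq\Autgr(L,L/f).\]
The identification $\Autgr(P,L/f)\simeq\Autgr(L/f)$ is also exactly Proposition \ref{prop:lift1} applied to $L/(f)$. The only delicate point is the degree comparison in Step 2: the hypothesis $d>t_1(L)$ is used precisely to make $\pi$ injective in the degrees of the generators of $I$.
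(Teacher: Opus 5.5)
Your proof is correct and follows the paper's argument. You lift $\phi$ to the polynomial ring via Proposition \ref{prop:lift1}, then use $d>t_1(L)$ to get $(f)_{d_j}=0$ in the degrees of the generators of $I$, so the lift kills $I$ and descends to $L$; your Steps 3 and 4 (that the lift preserves $(f)$, and that it is invertible by uniqueness of lifts) spell out points the paper only asserts in passing.
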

\begin{proof}
    Let $\phi \colon L/(f) \to L/(f)$ be a graded homomorphism, since
    $L_1 \simeq (L/(f))_1$, $\beta_{1,1}(L/(f))=\beta_{1,1}(L)=0$ and then by Proposition \ref{prop:lift1}, $\phi$ lifts to a graded homomorphism $\tilde{\phi} \colon \CC[T_0, \dots, T_N] \to \CC[T_0, \dots, T_N] $. Let $\kappa \colon \CC[T_0, \dots, T_N]  \to L $ and $q \colon \CC[T_0, \dots, T_N]  \to L/(f) $ be the quotient maps. By construction,
$ q \circ \tilde{\phi} = \phi \circ q.$

We denote by $\{f_1,\ldots,f_k\}$ a set of minimal homogeneous generators of
$I$ and denote by $d_j:=\deg f_j$.
We claim that for every $f_j \in \{f_1,\ldots,f_k\}$, $\kappa(\tilde{\phi}(f_j)) =0 $. Indeed, since $d > t_1(L) \geq d_j$, we have that $(f)_{d_j}=0$ and so
the graded pieces $L_{d_j}$ and $\big(L/(f)\big)_{d_j}$ are canonically isomorphic. Hence, identifying these two graded pieces $$\kappa(\tilde{\phi}(f_j)) = \kappa_{d_j}(\tilde{\phi}_{d_j}(f_j))= q_{d_j}(\tilde{\phi}_{d_j}(f_j)))= \phi_{d_j} (q_{d_j}(f_j))=0.$$
Since $f_j$ generate $I$ as an ideal and $\kappa \circ \tilde{\phi}$ is a ring homomorphism, $ I \subseteq \ker \kappa \circ \tilde{\phi} $. 

Then by the universal property of quotient graded algebras, there is a unique graded algebra homomorphism $\phi' \colon L \to L$ such that $\phi'\circ \kappa=\kappa \circ \tilde{\phi}$. If $\phi$ is an isomorphism, the same reasoning applies to its inverse.  Finally, any automorphism of graded algebras $\phi \colon L/(f) \to L/(f)$
lifts uniquely to a automorphism of graded algebras $ \phi' \colon L \to L$, and the last assertion of the proposition follows from Lemma \ref{lem2} and Proposition \ref{prop:lift1}.

\end{proof}

\section{Automorphisms of polarized varieties and section rings}
\label{4}
The main result of this section (Proposition \ref{prop:gradedaut_autY}) states that 
for a normally generated line
bundle $B$, we have the following exact sequence

\[
1 \longrightarrow \mathbb{C}^*
\xrightarrow{\;\iota\;}
\operatorname{Aut}_{\mathrm{gr}}(R(Y,B))
\xrightarrow{\;\Phi_{(Y,B)}\;}
\operatorname{Aut}(Y,B)
\longrightarrow 1.
\]

The above result is closely related to the exact sequence of \cite[Proposition 2.13]{Brion}, although the central objects
in the two sequences are defined differently. We give a self-contained proof in the present setting
inspired by the argument of Amrani for weighted projective spaces
in \cite[Section 8]{Amrani1989}.

\begin{df}
 Let $B$ be an ample line bundle on a projective variety $Y$. We denote by
 $\Aut(Y,B)= \{g \in \Aut(Y) \mid g^*B \simeq B \}$.

\end{df}
We recall a basic algebraic geometry fact before proving the next proposition.
\begin{rmk}
\label{hartshorne}
\cite [Exercise 2.14 b, Chapter II]{Hartshorne}, Given a homomorphism $\varphi \colon S \to S'$ of graded rings, denoting by $\mathfrak{m}$ the irrelevant ideal of $S$, that is, $\mathfrak{m}= \bigoplus_{d>0} S_d$, we have that $\varphi$ induces a homomorphism
$g \colon U \to \text{Proj } S$,
where $U= \{ p \in \text{Proj } S'| \phi(\mathfrak{m}) \not\subseteq p \}$.
\end{rmk}

For our next proposition, we recall that in this paper, when refer to a variety, we refer to an integral scheme of finite type over $\mathbb{C}$.

\begin{df}
    We recall that $Y \subset \mathbb{P}^N$ is said to be nondegenerate if it $Y$ is not contained in any hyperplane of $\mathbb{P}^N$, or equivalently not contained in any proper linear subspace.
\end{df}

\begin{lem} \label{helpfullemma}
    Let $Y \subset \mathbb{P}^N$ be a nondegenerate projective variety. If $g\in \operatorname{PGL}(N+1)$ satisfies $g|_Y=\operatorname{id}_Y$,
    then $g= \operatorname{id}_{\mathbb{P}^N}$.
\end{lem}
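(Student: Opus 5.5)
Represent $g$ by a matrix $A \in \mathrm{GL}(N+1)$, i.e.\ a linear automorphism of $V=\mathbb{C}^{N+1}$ whose projectivization restricts to the identity on $Y$. The plan is to show that $A$ is a scalar. The condition $g|_Y=\operatorname{id}_Y$ means that for every point $y\in Y$, any representative vector $v_y\in V$ satisfies $Av_y=\lambda(y)v_y$ for some $\lambda(y)\in\mathbb{C}^*$. In other words, the affine cone $C(Y)\subset V$ is contained in the union of the eigenspaces of $A$.

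First I use that $A$ has finitely many eigenvalues $\lambda_1,\dots,\lambda_r$. Let $E_i=\ker(A-\lambda_i)\subseteq V$ be the eigenspaces. Then $Y\subseteq \bigcup_{i}\mathbb{P}(E_i)$, and each $\mathbb{P}(E_i)$ is a closed linear subspace. Since $Y$ is a variety, hence irreducible, it is contained in one of them, say $\mathbb{P}(E_1)$.

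Next, nondegeneracy forces $E_1=V$. Otherwise $\mathbb{P}(E_1)$ would be a proper linear subspace containing $Y$. Hence $A=\lambda_1\cdot\operatorname{Id}$, and so $g=\operatorname{id}_{\mathbb{P}^N}$ in $\operatorname{PGL}(N+1)$.

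The only point needing care is the claim that $Y$ lies in the union of the $\mathbb{P}(E_i)$, which holds because each fixed point of $g$ corresponds to an eigenline of $A$. Beyond this there is no real obstacle: the argument is elementary, and irreducibility of $Y$ (part of the paper's convention that varieties are integral) is essential. An alternative route would be to choose $N+2$ points of $Y$ in general linear position, which exist by nondegeneracy, and note that a projective transformation fixing such a frame is the identity. I prefer the eigenspace argument because it avoids constructing the frame.
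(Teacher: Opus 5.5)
Your proof is correct and follows essentially the same route as the paper's. Both lift $g$ to $A\in\operatorname{GL}(N+1)$, observe that the affine cone over $Y$ lies in the union of the eigenspaces of $A$, use irreducibility of $Y$ to place $Y$ inside a single $\mathbb{P}(E_{\lambda_i})$, and conclude from nondegeneracy that $A$ is a scalar.
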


\begin{proof}
     Let $C(Y)$ be the affine cone over $Y$ and $A$ be a lift of $g$ to $\operatorname{GL}(N+1)$. Since $g$ restricts to the identity on $Y$, for every nonzero $v \in C(Y)$  we have $$Av =    \lambda_{v} v  \text{ for some $ \lambda_{v} \in \mathbb{C}^*$}. $$
     Suppose that $A$ has $r$ distinct eigenvalues $\lambda_1, \dots, \lambda_r$ and denote by $E_{\lambda_i}$ the corresponding eigenspaces. Thus, $C(Y) \subseteq  \bigcup_{i=1}^{r}  E_{\lambda_i}$ and consequently $$ Y \subseteq \bigcup_{i=1}^{r}\mathbb{P}(E_{\lambda_i}).$$

By the convention adopted in this paper, projective varieties are irreducible, so it follows that $Y \subseteq \mathbb{P}(E_{\lambda_i}) $ for one $i \in
\{1, \dots,r\}$. Moreover, since $Y$ is nondegenerate and  $\mathbb{P}(E_{\lambda_i})$ is a linear subset of $\mathbb{P}^N$, we have that
     $\mathbb{P}(E_{\lambda_i})=\mathbb{P}^N$. Finally, we conclude that $A= \lambda_i \operatorname{id}$, and hence $g= \operatorname{id}_{\mathbb{P}^N}$.
\end{proof}

\begin{rmk} \label{C}

We note that when $S$ is a graded algebra generated by degree 1 elements,
there is a natural injective homomorphism $$ \iota \colon\mathbb{C}^* \hookrightarrow \Autgr(S),$$ given by $\iota(\lambda)|_{S_m}= \lambda^m \operatorname{id}_{S_m}, \text{ } \lambda \in \mathbb{C}^*, m \geq 0.$
\end{rmk}
\begin{prop} \label{prop:gradedaut_autY}
Let $Y$ be a projective variety and $B$ a normally generated line bundle. 
Let 
\[
R(Y,B) := \bigoplus_{m \geq 0} H^0\!\left(Y, B^{\otimes m}\right)
\]
be the section ring of $B$, and denote by $\Autgr(R(Y,B))$ the group of automorphisms of $R(Y,B)$ as a $\CC$-graded algebra. 
Then, there is a natural short exact sequence

\[
1 \longrightarrow \mathbb{C}^*
\xrightarrow{\;\iota\;}
\operatorname{Aut}_{\mathrm{gr}}(R(Y,B))
\xrightarrow{\;\Phi_{(Y,B)}\;}
\operatorname{Aut}(Y,B)
\longrightarrow 1.
\]

\noindent where $\Phi_{(Y,B)}$ is induced by the action of a graded automorphism on $\Proj R(Y,B)$.
In particular, $$\Aut(Y,B) \simeq \Autgr(R(Y,B))/\CC^*.$$
\end{prop}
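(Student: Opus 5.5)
We construct $\Phi_{(Y,B)}$ explicitly, check that it is a well-defined homomorphism, and then prove exactness in three places: injectivity of $\iota$, $\ker\Phi_{(Y,B)}=\iota(\mathbb{C}^*)$, and surjectivity.

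\textbf{Construction.} Since $B$ is normally generated, Remark \ref{ngisvample} shows that $B$ is very ample. Moreover $R(Y,B)\simeq S(Y)=\mathbb{C}[T_0,\dots,T_N]/I_*$ for the projectively normal embedding $\phi_B\colon Y\hookrightarrow\mathbb{P}^N$, so $\Proj R(Y,B)\simeq Y$. A graded automorphism $\varphi$ of $R(Y,B)$ is bijective in positive degrees, so $\varphi(\mathfrak m)=\mathfrak m$. Hence the open set $U$ of Remark \ref{hartshorne} is all of $\Proj R(Y,B)$, and $\varphi$ induces a morphism $g_\varphi\colon Y\to Y$. Compatibility with composition gives $g_{\varphi\circ\psi}=g_\psi\circ g_\varphi$, and $g_{\mathrm{id}}=\mathrm{id}$. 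So $\Phi(\varphi):=g_{\varphi^{-1}}$ (or $g_\varphi$, with the convention fixed once) is an automorphism of $Y$, and $\Phi$ is a homomorphism. Concretely, $\varphi_1$ is a linear automorphism of $H^0(Y,B)$, and $g_\varphi$ is the restriction to $Y$ of the projective linear map $[\varphi_1^*]\in\mathrm{PGL}(N+1)$, which preserves $Y$ because $\varphi$ preserves $I_*$ (Proposition \ref{prop:lift1}). The isomorphism $\varphi_1$ identifies $g_\varphi^*\mathcal{O}(1)|_Y$ with $\mathcal{O}(1)|_Y=B$, so $g_\varphi\in\Aut(Y,B)$.

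\textbf{Exactness.} Injectivity of $\iota$ is Remark \ref{C}. Clearly $\iota(\lambda)$ acts on $\mathbb{P}^N$ as the scalar $\lambda$, so it induces the identity on $Y$. Conversely, suppose $\Phi(\varphi)=\mathrm{id}_Y$. The linear map $\varphi_1$ induces an element of $\mathrm{PGL}(N+1)$ restricting to the identity on $Y$. The variety $Y$ is nondegenerate, since $H^0(\mathbb{P}^N,\mathcal{O}(1))\to H^0(Y,B)$ is an isomorphism. Lemma \ref{helpfullemma} therefore gives $\varphi_1=\lambda\,\mathrm{id}$. Since $R(Y,B)$ is generated in degree $1$ (Remark \ref{rmk1}), it follows that $\varphi=\iota(\lambda)$.

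\textbf{Surjectivity.} This is the main step. Let $g\in\Aut(Y,B)$ and fix an isomorphism $u\colon g^*B\xrightarrow{\sim}B$. Then the maps $s\mapsto u^{\otimes m}(g^*s)$ define linear isomorphisms $H^0(Y,B^{\otimes m})\to H^0(Y,B^{\otimes m})$. These are compatible with multiplication of sections, because $g^*$ and $u^{\otimes m}$ are both multiplicative. This yields a graded automorphism $\varphi_{g,u}$ of $R(Y,B)$, and changing $u$ by a scalar changes $\varphi_{g,u}$ by an element of $\iota(\mathbb{C}^*)$.

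It remains to check that $\Phi(\varphi_{g,u})=g$ (or $g^{-1}$, depending on the convention). The point $y$ corresponds to the homogeneous prime of sections vanishing at $y$. Pulling back by $g$ sends sections vanishing at $g(y)$ to sections vanishing at $y$, so the induced map on $\Proj$ is $g$. The main obstacle is keeping the direction conventions consistent so that $\Phi$ is a homomorphism rather than an anti-homomorphism. I would fix $\Phi(\varphi)$ as the map induced by $\varphi^{-1}$, and verify the identification on closed points via $y\mapsto\{s\mid s(y)=0\}$, exactly as in the definition of $\phi_B$.

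Exactness at all three places then yields $\Aut(Y,B)\simeq\Autgr(R(Y,B))/\mathbb{C}^*$.
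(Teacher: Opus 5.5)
Your proposal is correct and is essentially the paper's proof. The shared steps are: $\Phi(\varphi)=\Proj(\varphi)^{-1}$ (your final choice; the alternative $g_\varphi$ would be an anti-homomorphism); the kernel via Lemma~\ref{helpfullemma} together with generation in degree $1$; and surjectivity via $s\mapsto u^{\otimes m}(g^*s)$ checked on the primes $\mathfrak p_y$. For surjectivity the paper applies the construction to $h=g^{-1}$ and adds that two morphisms of a reduced separated finite-type $\mathbb{C}$-scheme agreeing on closed points are equal, a step you should state explicitly. The remaining differences are cosmetic: you get $\Proj R(Y,B)\simeq Y$ from projective normality and $g^*B\simeq B$ from the linear lift of Proposition~\ref{prop:lift1}, where the paper cites Brion--Kumar and Hartshorne II.5.12(c).
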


\begin{proof}
Since $B$ is ample, we have $\Proj R(Y,B) \simeq Y$  by \cite[Lemma 1.1.13]{brion2005frobenius}. Therefore, by Remark \ref{hartshorne}, 
we can define a group homomorphism 
\begin{align*}
\Phi_{(Y,B)} \colon \Autgr(R(Y,B))  &\to \Aut(Y,B) \\
\varphi &\mapsto g := \Proj (\varphi)^{-1}
\end{align*} 

In order to simplify our notation, we denote 
$\Phi_{(Y,B)}$ by $\Phi$ and  $R(Y,B)$ by $R$ in the rest of the proof.
Since $B$ is normally generated, by Remark \ref{rmk1}, $R$ is generated in degree 1. We have that in this case $B \simeq \OO_Y(1)$ and therefore by \cite[Proposition 5.12c]{Hartshorne}, for every $g \in \im \Phi$,
$$g^*B \simeq B,$$
so $\im \Phi \subset \Aut(Y,B).$
Thus, in order to obtain our result, we have to show that $\im \Phi = \Aut(Y,B)$ and that
$\ker \Phi = \mathbb{C}^*.$

We start by proving $\im \Phi = \Aut(Y,B)$. Let $g \in \Aut(Y,B)$ and put $h := g^{-1}\in \Aut(Y,B)$. Fix an isomorphism
$\alpha_1 \colon h^*B \xrightarrow{\simeq} B$ and let
$\alpha_m := \alpha_1^{\otimes m} \colon h^*(B^{\otimes m}) \simeq (h^*B)^{\otimes m} \to B^{\otimes m}$.
For $s \in H^0(Y,B^{\otimes m})$ define
$$\varphi_m(s) := \alpha_m(h^*s) \in H^0(Y,B^{\otimes m}).$$
Each $\varphi_m$ is $\mathbb{C}$-linear and bijective, since $h^*$ is bijective on global
sections ($h$ is an automorphism) and $\alpha_m$ is an isomorphism. Moreover, for
$s \in H^0(Y,B^{\otimes m})$ and $t \in H^0(Y,B^{\otimes n})$ we have $h^*(st)=h^*s\cdot h^*t$
and $\alpha_{m+n}=\alpha_m\otimes\alpha_n$, hence
$$\varphi_{m+n}(st)=\varphi_m(s)\,\varphi_n(t),$$
where products are taken via the canonical multiplication maps
$H^0(Y,B^{\otimes m})\otimes H^0(Y,B^{\otimes n})\to H^0(Y,B^{\otimes(m+n)})$.
Thus $\varphi:=\bigoplus_m \varphi_m$ is a graded $\mathbb{C}$-algebra automorphism of $R$.

We claim $\Proj(\varphi)=h$. Since $Y$ is a variety over $\mathbb{C}$, it suffices to
compare the two morphisms on closed points. For a closed point $y\in Y$ let
$\mathfrak{p}_y=\bigoplus_{m\ge 0}\{s\in H^0(Y,B^{\otimes m}) : s(y)=0\}$; this is a
homogeneous prime ideal not containing $R_+$ (as $B$ is globally generated), and
$y\leftrightarrow\mathfrak{p}_y$ under $\Proj R\simeq Y$. Because $\alpha_m$ is an
isomorphism of line bundles, $\alpha_m(h^*s)$ vanishes at $y$ if and only if $h^*s$
does, i.e.\ if and only if $s(h(y))=0$. Therefore
$$\varphi^{-1}(\mathfrak{p}_y)=\mathfrak{p}_{h(y)},$$
that is, $\Proj(\varphi)(y)=h(y)$ for every closed point $y$. Since $Y$ is reduced,
separated and of finite type over $\mathbb{C}$, by \cite[II Ex 4.2]{Hartshorne}, two morphisms $Y\to Y$ that agree on
closed points are equal, so $\Proj(\varphi)=h$. Hence
$\Phi(\varphi)=\Proj(\varphi)^{-1}=h^{-1}=g$, and $g\in\im\Phi$.
This proves $\im\Phi=\Aut(Y,B)$.

We now prove that $\ker \Phi = \mathbb{C}^*.$
It is clear that $\mathbb{C}^* \subseteq \ker \Phi$. For the reverse inclusion,  recall that since $B$ is normally generated, $B$ is very ample by Remark \ref{ngisvample} and hence $B$ induces a nondegenerate embedding $i_B \colon Y \hookrightarrow \mathbb{P}^N:= \mathbb{P}(H^0(Y,B))$. Let $\varphi\in\ker\Phi$. Then $\Phi(\varphi)=\operatorname{id}$ and the linear map $\varphi_1 \colon H^0(Y,B) \to H^0(Y,B) $ induces an automorphism $g$ on $\mathbb{P}^N$ such that
$$ \operatorname{id}_Y= \Phi(\varphi)=g|_Y$$

By Lemma \ref{helpfullemma}, $g= \operatorname{id}_{\mathbb{P}^N}$ and hence
$\varphi_1= \lambda \operatorname{id}_{R_1}$ for some $\lambda \in \mathbb{C}^*$.
Since $R$ is generated by elements of degree 1, it follows that
$\varphi= \iota (\lambda)$ as in Remark \ref{C}. Thus, $\ker \Phi = \mathbb{C}^*.$

\end{proof}

\section{Varieties with an $\Aut(X)$-invariant polarization} \label{5}
In this section, we will be interested in varieties that admit an ample line bundle
$H$ such that $\Aut(X)= \Aut(X,H)$. The main motivation for studying such varieties is that their automorphism groups can be studied algebraically, as we will see in Corollary
\ref{cor:aut=autgr}.

\begin{df} Let $X$ be a projective variety and $H$ an ample line bundle on $X$. If for every $g \in \Aut(X)$, $g^*H \simeq H$, we say that $H$ is an $\Aut(X)$-invariant polarization.
In this case, we say that $X$ admits an
$\Aut(X)$-invariant polarization.
\end{df}

An immediate corollary is the following

\begin{cor} \label{cor:aut=autgr}
Let $X$ be a projective variety and $H$ an $\Aut(X)$-invariant and normally generated line bundle. We denote by $R(X,H)$ the ring of sections of $H$. Then,

$$ \Aut(X) \simeq \Autgr( R(X,H))/\mathbb{C}^*.$$
    
\end{cor}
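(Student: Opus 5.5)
This corollary follows directly from Proposition \ref{prop:gradedaut_autY} applied with $Y = X$ and $B = H$. The one extra input is that, under the invariance hypothesis, the subgroup $\Aut(X,H)$ is all of $\Aut(X)$.

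\textbf{Step 1: $\Aut(X,H)=\Aut(X)$.} By definition, $\Aut(X,H)=\{g\in\Aut(X)\mid g^*H\simeq H\}$. Since $H$ is an $\Aut(X)$-invariant polarization, every $g\in\Aut(X)$ satisfies $g^*H\simeq H$, so the inclusion $\Aut(X,H)\subseteq\Aut(X)$ is an equality. No further checking is needed here, since both are subgroups of $\Aut(X)$ with the same group law.

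\textbf{Step 2: apply the proposition.} $H$ is normally generated, and $X$ is a projective variety in the paper's convention (integral, projective over $\mathbb{C}$). So the hypotheses of Proposition \ref{prop:gradedaut_autY} hold, and it gives the short exact sequence
\[
1 \longrightarrow \mathbb{C}^* \xrightarrow{\;\iota\;} \Autgr(R(X,H)) \xrightarrow{\;\Phi_{(X,H)}\;} \Aut(X,H) \longrightarrow 1 .
\]
Here $\iota(\mathbb{C}^*)$ is the normal (indeed central) subgroup of Remark \ref{C}. The first isomorphism theorem then yields $\Aut(X,H)\simeq \Autgr(R(X,H))/\mathbb{C}^*$, and combining with Step 1 gives $\Aut(X)\simeq \Autgr(R(X,H))/\mathbb{C}^*$.

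\textbf{Main obstacle.} There is no genuine obstacle, since all the substance (surjectivity via pulling back sections, and identifying the kernel via Lemma \ref{helpfullemma}) is already in Proposition \ref{prop:gradedaut_autY}. The only point to state explicitly is that the isomorphism is induced by $\varphi\mapsto \Proj(\varphi)^{-1}$. This makes it natural, which matters for the later comparison with $\Aut(F;X)$ in the main theorem.
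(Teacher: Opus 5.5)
Your proposal is correct and follows the paper's own proof: the invariance hypothesis gives $\Aut(X)=\Aut(X,H)$, and the conclusion is then immediate from the exact sequence of Proposition \ref{prop:gradedaut_autY} applied with $Y=X$, $B=H$. Your remarks that $\iota(\mathbb{C}^*)$ is central and that the isomorphism is induced by $\varphi\mapsto\Proj(\varphi)^{-1}$ are accurate additions that the paper leaves implicit.
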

\begin{proof}
 From the definitions it follows that if $H$ is a $\Aut(X)$-invariant polarization, $\Aut(X)= \Aut(X,H)$. Then, the result is an immediate consequence of Proposition \ref{prop:gradedaut_autY}.
\end{proof}

In what follows, we see some conditions on varieties $X$ that guarantee the existence of an $\Aut(X)$-invariant polarization $H$. Our motivation in this article is understanding when automorphisms of a hypersurface are completely determined by the ambient space. Therefore, we will be particularly interested to understand when $H|_X$ is a $\Aut(X)$-invariant polarization in following setting:
$F$ a projective variety with an ample line bundle $H$ and $X \in |H^{\otimes d} |$ for some $d >0$.

\begin{prop}\label{prop:torsionfree}
Let $X$ be a projective variety with torsion-free Picard group, and $H$ be an ample line bundle on $X$. Suppose that $\omega_X= H^{\otimes a}$ for some non-zero integer $a$, then $H$ is $\Aut(X)$-invariant.  
\end{prop}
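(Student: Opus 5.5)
The approach is to use that the canonical sheaf is intrinsic, so every automorphism preserves it, and then to recover $H$ from $\omega_X$ using torsion-freeness of $\Pic(X)$.

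Fix $g \in \Aut(X)$. Since $g$ is an automorphism of the scheme $X$, it preserves the canonical sheaf. If $X$ is smooth, or more generally Gorenstein, this is the statement $g^*\omega_X \simeq \omega_X$. If $X$ is only a projective variety, I will interpret $\omega_X$ as the dualizing sheaf. By uniqueness of the dualizing sheaf, and because $g$ is an isomorphism (so $g^! = g^*$), we again get $g^*\omega_X \simeq \omega_X$. The hypothesis $\omega_X \simeq H^{\otimes a}$ already forces $\omega_X$ to be a line bundle, so this identification takes place in $\Pic(X)$.

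Write $L := g^*H \otimes H^{-1} \in \Pic(X)$. Since pullback is a group homomorphism on $\Pic(X)$, we have
\[
L^{\otimes a} \simeq g^*(H^{\otimes a}) \otimes H^{\otimes(-a)} \simeq g^*\omega_X \otimes \omega_X^{-1} \simeq \OO_X .
\]
So $L$ is a torsion element of $\Pic(X)$; the assumption $a \neq 0$ is used exactly here. Because $\Pic(X)$ is torsion-free, $L \simeq \OO_X$, that is, $g^*H \simeq H$. As $g$ was arbitrary, $H$ is $\Aut(X)$-invariant. Ampleness of $H$ plays no role in the argument; it is only part of the definition of a polarization.

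The only delicate point is the first step: justifying $g^*\omega_X \simeq \omega_X$ for a possibly singular $X$. I would handle it by the intrinsic characterization of the dualizing sheaf, namely its representing property for $\operatorname{Hom}(H^n(X,-),\CC)$, which is transported by any automorphism. Everything after that is formal group theory in $\Pic(X)$.
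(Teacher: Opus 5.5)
Your proof is correct and is essentially the paper's argument: set $L=g^*H\otimes H^{-1}$, note that $L^{\otimes a}\simeq g^*\omega_X\otimes\omega_X^{-1}\simeq\OO_X$, and conclude $L\simeq\OO_X$ from torsion-freeness (the paper phrases this as a contradiction rather than directly). Your justification of $g^*\omega_X\simeq\omega_X$ for singular $X$, via uniqueness of the dualizing sheaf, fills in a step that the paper only asserts.
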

\begin{proof}
    Let $g\in\Aut(X)$ and suppose that $g^*H\not\simeq H$.
Set $L := g^*H\otimes H^{-1}.$
Then $L$ is a nontrivial element of $\Pic(X)$. However,
\[
L^{\otimes a}
= g^*H^{\otimes a}\otimes H^{-\otimes a}
= g^*\omega_X \otimes \omega_X^{-1}
\simeq \OO_X,
\]
since automorphisms preserve the canonical bundle.
Thus $L$ is a torsion element of $\Pic(X)$, which contradicts the fact
that $\Pic(X)$ is torsion-free. Therefore
$
g^*H\simeq H
$
for every $g\in\Aut(X)$.
\end{proof}

We recall some results that will be helpful.

\begin{thm} \label{homotopy}\cite[Theorem 3.1.21]{Lazarsfeld}(Lefschetz theorem for homotopy groups). Let $F$ be a non-singular irreducible complex projective variety of dimension $n$, and let $X$ be
 any effective ample divisor on $F$. Then
 $\pi_i(F,X) =0 \text{ for } i<n.$
 In particular, the homomorphism
 $\pi_i(X) \to \pi_i(F)$
 induced by inclusion is bijective if $i \leq n-2$ and surjective if $i = n -1$.
    
\end{thm}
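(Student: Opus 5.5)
We will follow the classical Morse-theoretic route of Andreotti--Frankel and Bott, as presented in \cite[Section 3.1]{Lazarsfeld}. The idea is that $F\setminus X$ is a smooth affine variety, hence ``small'' in the topological sense. Dually, $F$ is obtained from a small neighbourhood of $X$ by attaching only cells of high dimension.

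\textbf{Step 1: reduce to a very ample section with affine complement.} Since $X$ is an effective ample divisor, some multiple $mX$ is very ample. So $mX$ is the pullback of a hyperplane section under an embedding $F\hookrightarrow\mathbb{P}^M$. Consequently $U:=F\setminus X=F\setminus \operatorname{Supp}(mX)$ is a closed subvariety of $\mathbb{P}^M\setminus\{\text{hyperplane}\}\simeq\mathbb{C}^M$. Thus $U$ is a smooth affine variety of complex dimension $n$. Only the support of $X$ matters for the homotopy groups, so we may work with $mX$ freely.

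\textbf{Step 2: Morse theory on the affine part.} Embed $U\subset\mathbb{C}^M$ and choose a general point $p\in\mathbb{C}^M$. Consider the function
\[
\psi(z)=-\|z-p\|^2,
\]
or equivalently a strictly plurisubharmonic exhaustion $\varphi$ of $U$ with $\psi=-\varphi$. For general $p$ this is a Morse function on $U$. Since $\varphi$ is strictly plurisubharmonic, at each critical point the Hessian of $\varphi$ has index at most $n$; this is the Andreotti--Frankel estimate, coming from the fact that the complex Hessian is positive definite. Hence every critical point of $\psi$ has index at least $2n-n=n$.

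Near $X$, the function $\psi$ tends to $-\infty$, so its sublevel sets $\{\psi\le -c\}$ for $c\gg0$, together with $X$, form a neighbourhood $N$ of $X$ in $F$. By standard Morse theory, $F$ is homotopy equivalent to $N$ with finitely many cells attached, each of real dimension at least $n$. Cellular approximation then gives $\pi_i(F,N)=0$ for $i<n$.

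\textbf{Step 3: replace $N$ by $X$ and conclude.} We need $X\hookrightarrow N$ to be a homotopy equivalence for $N$ small. Triangulate the pair $(F,X)$ using \L ojasiewicz's theorem; this is valid even though $X$ may be singular or non-reduced, since we use only its support. Then $X$ has a regular neighbourhood deformation retracting onto it. We arrange the sublevel neighbourhood from Step 2 to be such a neighbourhood, choosing the exhaustion compatibly with the triangulation, or comparing two cofinal neighbourhood systems. This yields $\pi_i(F,X)=\pi_i(F,N)=0$ for $i<n$.

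The final assertion follows from the long exact homotopy sequence
\[
\pi_{i+1}(F,X)\to\pi_i(X)\to\pi_i(F)\to\pi_i(F,X).
\]
The map $\pi_i(X)\to\pi_i(F)$ is bijective when both flanking relative groups vanish, i.e. $i\le n-2$. It is surjective when only the right-hand one vanishes, i.e. $i=n-1$.

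\textbf{Main obstacle.} The delicate point is Step 2: proving the index bound and controlling the behaviour of $\psi$ near $X$. The index bound amounts to showing that the real Hessian of a strictly plurisubharmonic function restricted to an $n$-dimensional complex manifold has a negative eigenspace of dimension at most $n$. We would verify it by noting that on any complex line in a negative eigenspace $W$, the restriction of the Hessian would be negative definite, contradicting plurisubharmonicity. Hence $W\cap JW=0$, which forces $\dim_{\mathbb{R}}W\le n$.
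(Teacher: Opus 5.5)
Your proposal is correct and is essentially the standard Andreotti--Frankel/Bott Morse-theoretic argument of the cited source \cite[Theorem 3.1.21]{Lazarsfeld}; the paper quotes that result and does not prove it itself. Step 3 is the only soft spot, and it is easier than your proposal makes it: you do not need $X\hookrightarrow N$ to be a homotopy equivalence, because if $R$ is a regular neighbourhood retracting onto $X$ and $c\gg 0$ is chosen so that $X\subset N\subset R$, then the bijection $\pi_i(F,X)\to\pi_i(F,R)$ factors through $\pi_i(F,N)=0$ for $i<n$.
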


\begin{lem}\cite[Lemma 2.5]{EsserAutoWPS} \label{torsionfree}
Let $V$ be a connected scheme of finite type over $\mathbb{C}$. If $\pi_1(V)=1$, then $\Pic(V)$ is
 torsion-free.
\end{lem}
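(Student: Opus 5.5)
The plan is to derive the statement directly from Lemma~\ref{torsionfree}, by checking that its hypothesis $\pi_1(V)=1$ holds in our setting. Lemma~\ref{torsionfree} itself I would take as proved in \cite[Lemma 2.5]{EsserAutoWPS}; if a self-contained argument were wanted, I would argue as follows. Suppose $L\in\Pic(V)$ satisfies $L^{\otimes r}\simeq\OO_V$ for some $r>1$, and fix a trivialization $s\colon \OO_V\xrightarrow{\sim} L^{\otimes r}$.

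The first step is the classical cyclic cover construction. Form the $\OO_V$-algebra
\[
\mathcal{A}=\bigoplus_{i=0}^{r-1} L^{\otimes -i},
\]
with multiplication $L^{\otimes -r}\to\OO_V$ given by $s^{-1}$, and set $\pi\colon W=\operatorname{Spec}_V\mathcal{A}\to V$. Since we are over $\mathbb{C}$, $r$ is invertible, so $\pi$ is finite étale of degree $r$; locally it is $z^r=u$ with $u$ a unit.

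The second step uses $\pi_1(V)=1$. Finite étale covers of a connected scheme of finite type over $\mathbb{C}$ correspond to finite sets with a $\pi_1^{\mathrm{et}}(V)$-action. By Riemann existence, $\pi_1^{\mathrm{et}}(V)$ is the profinite completion of $\pi_1(V^{an})$, so it is trivial. Hence $W\simeq\bigsqcup_{j=1}^r V$ and $\pi$ admits a section $\sigma\colon V\to W$.

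The third step turns this section into a trivialization of $L$. A section of $\pi$ is an $\OO_V$-algebra map $\mathcal{A}\to\OO_V$. Its restriction to the summand $L^{-1}$ gives a map $t\colon L^{-1}\to\OO_V$ with $t^{\otimes r}=s^{-1}$. Since $s^{-1}$ is an isomorphism, $t$ is surjective at every point, hence an isomorphism of invertible sheaves. Therefore $L\simeq\OO_V$, and $\Pic(V)$ is torsion-free.

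I expect the main obstacle to be the second step: passing from the topological hypothesis on $\pi_1(V)$ to the triviality of étale covers, which requires the Riemann existence theorem (the comparison $\pi_1^{\mathrm{et}}=\widehat{\pi_1}$). Connectedness of $V$ is needed here, and the argument also implicitly uses that $V$ has no nontrivial nilpotent-related subtleties. The latter is harmless, because étale covers of $V$ and of $V_{\mathrm{red}}$ agree and $\pi_1$ is insensitive to nilpotents.
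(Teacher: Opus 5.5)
The paper gives no proof of this lemma; it is quoted from \cite[Lemma 2.5]{EsserAutoWPS}. So the only question is whether your argument stands on its own, and it does.

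\begin{itemize}
\item Locally, the cyclic algebra $\mathcal{A}=\bigoplus_{i=0}^{r-1}L^{\otimes -i}$ is $\OO_U[z]/(z^r-u)$ with $u$ a unit. Since $r$ is invertible in $\mathbb{C}$, $W\to V$ is finite étale.
\item Riemann existence (SGA1, Exp.~XII) holds for any scheme locally of finite type over $\mathbb{C}$, reduced or not. Since $V^{an}$ is connected because $V$ is, $\pi_1(V)=1$ gives $\pi_1^{\mathrm{et}}(V)=\widehat{\pi_1(V^{an})}=1$, so $W\to V$ splits.
\item A section gives $t\colon L^{-1}\to\OO_V$ whose $r$-th tensor power is the structure isomorphism $L^{\otimes -r}\simeq\OO_V$. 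Locally $t$ is multiplication by an $r$-th root of a unit, hence an isomorphism, so $L\simeq\OO_V$.
\end{itemize}

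This is the geometric form of the standard Kummer-theory proof. There, the Kummer sequence gives a surjection $H^1_{\mathrm{et}}(V,\mu_r)\to\Pic(V)[r]$, and $H^1_{\mathrm{et}}(V,\mu_r)\simeq\operatorname{Hom}(\pi_1^{\mathrm{et}}(V),\mathbb{Z}/r)=0$. Your version avoids étale cohomology and needs only the equivalence between finite étale covers and finite topological covers.

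Two points of presentation. First, your opening sentence proposes to derive the statement ``directly from Lemma~\ref{torsionfree}''. That lemma is the statement itself, so as framed this is circular. Drop it and present the cyclic-cover argument as the proof. Second, the closing remark about nilpotents and $V_{\mathrm{red}}$ is unnecessary, because nothing in your argument uses reducedness. It is better not to hint at a reduction to $V_{\mathrm{red}}$ at all, since $\Pic(V)\to\Pic(V_{\mathrm{red}})$ need not be injective.
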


As a consequence, we obtain:

\begin{prop}\label{prop:Hautinvariant}
Let $F$ be a smooth simply connected projective variety with $\dim F \ge 3$.
Suppose $H$ be an ample line bundle on $F$ such that $\omega_F=H^{\otimes r}$ for some integer
$r$. Let $X\in |H^{\otimes d}|$ be a smooth element, for $d>0$ and $d \neq -r$. Then, $H|_X$ is
$\Aut(X)$-invariant.
\end{prop}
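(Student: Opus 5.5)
The plan is to apply Proposition \ref{prop:torsionfree} to the variety $X$ with the ample line bundle $H|_X$. That requires two inputs: that $\Pic(X)$ is torsion-free, and that $\omega_X$ is a nonzero power of $H|_X$.

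For torsion-freeness, I will use that $X$ is a smooth effective ample divisor on the smooth projective variety $F$, since $H^{\otimes d}$ is ample for $d>0$. Theorem \ref{homotopy} then says that $\pi_1(X)\to\pi_1(F)$ is bijective as soon as $1\le n-2$, where $n=\dim F\ge 3$. Since $F$ is simply connected, this gives $\pi_1(X)=1$. Because $X$ is smooth and ample, it is connected, so Lemma \ref{torsionfree} applies and shows that $\Pic(X)$ is torsion-free.

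For the canonical bundle, I will use adjunction, which is legitimate because $X$ is a smooth Cartier divisor in the smooth variety $F$:
\[
\omega_X\simeq (\omega_F\otimes \OO_F(X))|_X \simeq (H^{\otimes r}\otimes H^{\otimes d})|_X = (H|_X)^{\otimes (r+d)}.
\]
The hypothesis $d\neq -r$ gives $a:=r+d\neq 0$. Since $H$ is ample on $F$, the restriction $H|_X$ is ample on $X$. Proposition \ref{prop:torsionfree} then shows that $H|_X$ is $\Aut(X)$-invariant.

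I do not expect a serious obstacle. The only points needing care are the dimension bound in the Lefschetz step, since $\dim X=n-1\ge 2$ is exactly what makes $\pi_1$ fall in the bijective range, and the connectedness of $X$, which follows from ampleness (for instance via the surjectivity part of Lefschetz on $\pi_0$).
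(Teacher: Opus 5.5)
Your proposal is correct and follows essentially the same route as the paper. Adjunction gives $\omega_X\simeq (H|_X)^{\otimes(r+d)}$ with $r+d\neq 0$, Lefschetz gives $\pi_1(X)=1$, Lemma \ref{torsionfree} gives torsion-freeness of $\Pic(X)$, and Proposition \ref{prop:torsionfree} concludes. The only difference is how you get connectedness of $X$. The paper uses the restriction sequence together with Kodaira vanishing and Serre duality to show $H^1(F,H^{\otimes -d})=0$, hence $H^0(X,\OO_X)=\CC$. You use the vanishing of $\pi_1(F,X)$ (equivalently, the standard fact that effective ample divisors are connected), which is equally valid.
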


\begin{proof}
By the adjunction formula, we have $\omega_X=H|^{\otimes r+d}_X$ and since
$d \neq -r$, $r+d \neq 0$. We show that $\Pic(X)$ is torsion free.
Since $F$ is simply connected, we have $\pi_1(F)=1$. As $\dim F\ge3$,
Theorem \ref{homotopy} implies that $\pi_1(X)=1$. Moreover, we have the following exact sequence
$$0 \to H^{0}(F,H^{\otimes -d}) ) \to H^{0}(F,\OO_F ) \to H^{0}(X,\OO_X) \to H^{1}(F,H^{\otimes -d})$$
Since $H^{\otimes d}$ is ample, $H^{0}(F,H^{\otimes -d}) )=0$ and we can apply Kodaira vanishing theorem and Serre's duality to obtain $$0=h^{n-1}(F, H^{\otimes d} \otimes \omega_F) = h^1(F, H^{\otimes -d})$$

Hence, 
$H^0(X, \OO_X)\simeq H^0(F,\OO_F)=\CC$, so $X$ is connected. By Lemma \ref{torsionfree}, the group $\Pic(X)$ is torsion free and we can apply Proposition \ref{prop:torsionfree} to conclude our result.

\end{proof}

We see what happens when $F$ has mild singularities. We start recalling a version of adjunction formula in more general settings.

\begin{thm}\cite[Proposition 5.73]{KollarMori1998} \label{Gorenstein} Let $F$ be a Cohen-Macaulay projective variety  and $X \subset F$  an effective Cartier divisor. Then the adjunction formula
\[
\omega_X \simeq
\left(\omega_F \otimes \mathcal{O}_F(X)\right)\big|_X
\ \text { holds.}  \]
   
\end{thm}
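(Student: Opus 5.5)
The plan is to deduce the formula from Grothendieck duality for the closed immersion $i\colon X\hookrightarrow F$, and then to compute the relevant $\mathcal{E}xt$ sheaf from the short exact sequence defining $X$. Let $n=\dim F$. Since $F$ is Cohen--Macaulay and projective, its normalized dualizing complex is quasi-isomorphic to a single sheaf $\omega_F[n]$, so $\omega_F$ is the dualizing sheaf.

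\textbf{Step 1: $X$ is Cohen--Macaulay.} Locally $X$ is cut out by one equation $s$. Because $F$ is integral and $X$ is an effective Cartier divisor, $s$ is a nonzerodivisor. Quotienting a Cohen--Macaulay local ring by a nonzerodivisor gives a Cohen--Macaulay ring. Hence $X$ is Cohen--Macaulay of pure dimension $n-1$, and its dualizing complex is also a single sheaf $\omega_X[n-1]$.

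\textbf{Step 2: duality for the finite morphism $i$.} For $i$ finite we have $i^!(-)=R\mathcal{H}om_{\mathcal{O}_F}(i_*\mathcal{O}_X,-)$. Applied to the dualizing complex this gives
\[
i_*\omega_X[n-1]\simeq R\mathcal{H}om_{\mathcal{O}_F}(\mathcal{O}_X,\omega_F[n]).
\]
Taking cohomology in degree $-(n-1)$ yields
\[
i_*\omega_X\simeq \mathcal{E}xt^1_{\mathcal{O}_F}(\mathcal{O}_X,\omega_F).
\]
By Step 1 the other $\mathcal{E}xt^j$ vanish, though we do not need this separately.

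\textbf{Step 3: compute the $\mathcal{E}xt$.} Apply $\mathcal{H}om_{\mathcal{O}_F}(-,\omega_F)$ to
\[
0\to\mathcal{O}_F(-X)\to\mathcal{O}_F\to\mathcal{O}_X\to 0 .
\]
This gives the exact sequence
\[
0\to\mathcal{H}om(\mathcal{O}_X,\omega_F)\to\omega_F\xrightarrow{\cdot s}\omega_F\otimes\mathcal{O}_F(X)\to\mathcal{E}xt^1(\mathcal{O}_X,\omega_F)\to\mathcal{E}xt^1(\mathcal{O}_F,\omega_F)=0 .
\]
The first term vanishes because $\omega_F$ is a maximal Cohen--Macaulay sheaf with full support on the integral scheme $F$. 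Such a sheaf has no associated points on $X$, so local sections of $\omega_F$ killed by $s$ are zero.

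It follows that $\mathcal{E}xt^1(\mathcal{O}_X,\omega_F)$ is the cokernel of multiplication by $s$ on the line-bundle twist, namely $(\omega_F\otimes\mathcal{O}_F(X))\otimes\mathcal{O}_X$. Combined with Step 2, this gives $\omega_X\simeq(\omega_F\otimes\mathcal{O}_F(X))|_X$.

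\textbf{Main obstacle.} I expect the key point to be Step 2. One must justify that $i^!$ of the dualizing complex of $F$ is the dualizing complex of $X$, with the correct shift, and that the Cohen--Macaulay hypothesis concentrates both complexes in a single degree. Once this is granted, the remaining steps are the short local computation above.
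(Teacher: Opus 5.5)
Your argument is correct. The paper gives no proof of this statement; it quotes it from Koll\'ar--Mori, Proposition 5.73. So you are supplying a proof of an imported result, not reproducing one from the paper.

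The standard textbook route, in the setting of Hartshorne, Chapter III, \S7, runs as follows. Embed $F\subset\mathbb{P}^N$ with codimension $c$. Write $\omega_F\simeq\mathcal{E}xt^{c}_{\mathbb{P}^N}(\mathcal{O}_F,\omega_{\mathbb{P}^N})$ and $\omega_X\simeq\mathcal{E}xt^{c+1}_{\mathbb{P}^N}(\mathcal{O}_X,\omega_{\mathbb{P}^N})$. Then apply $\mathcal{E}xt_{\mathbb{P}^N}(-,\omega_{\mathbb{P}^N})$ to $0\to\mathcal{O}_F(-X)\to\mathcal{O}_F\to\mathcal{O}_X\to0$. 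The Cohen--Macaulay hypothesis on $F$ kills $\mathcal{E}xt^{c+1}_{\mathbb{P}^N}(\mathcal{O}_F,\omega_{\mathbb{P}^N})$, and the codimension of $X$ kills $\mathcal{E}xt^{c}_{\mathbb{P}^N}(\mathcal{O}_X,\omega_{\mathbb{P}^N})$. One then reads off $0\to\omega_F\to\omega_F(X)\to\omega_X\to0$.

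You do the same computation relative to $F$ instead of the ambient projective space. Duality for the finite morphism $i$ reduces everything to the single sheaf $\mathcal{E}xt^1_{\mathcal{O}_F}(\mathcal{O}_X,\omega_F)$. This needs no embedding and works verbatim for proper $F$. The cost is quoting the $i^!$ formalism, namely $i_*i^!\simeq R\mathcal{H}om(i_*\mathcal{O}_X,-)$ and $(f\circ i)^!\simeq i^!f^!$, which you rightly single out as the input to cite.

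Two small remarks on your write-up:
\begin{enumerate}
\item The final identification uses only right exactness of $\omega_F\xrightarrow{\cdot s}\omega_F(X)\to\mathcal{E}xt^1(\mathcal{O}_X,\omega_F)\to0$. So the vanishing of $\mathcal{H}om(\mathcal{O}_X,\omega_F)$ is not needed for the isomorphism.
\item Conversely, that vanishing, together with $\mathcal{E}xt^{j}(\mathcal{O}_X,\omega_F)=0$ for $j\geq2$ (from the length-one resolution of $\mathcal{O}_X$), shows directly that $i^!(\omega_F[n])$ is a single sheaf in degree $-(n-1)$. So the Cohen--Macaulayness of $X$ in your Step 1 already follows from your own computation.
\end{enumerate}
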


We recall a generalization of the Grothendieck–Lefschetz theorem due to Ravindra and Srinivas, in a form simplified for our purposes.

\begin{thm} \label{thm:GeneralizedLefschetz} \cite[Theorem 1]{RavindraSrinivas2005}
Let $F$ be a normal projective variety %regular in codimension $1$. 
Let $H$ be an ample and base-point-free line bundle on $F$. Let $X \in |H|$ be a general member. Then the restriction map
\[
\operatorname{Cl}(F) \longrightarrow \operatorname{Cl}(X)
\]
is an isomorphism if $\dim F \geq 4$. If $\dim F=3$, then the
restriction map is injective and has finitely generated cokernel.
\end{thm}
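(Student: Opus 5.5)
The plan is to reduce the statement to a Grothendieck--Lefschetz statement for Picard groups on the smooth loci, and then run Grothendieck's formal-completion argument from SGA2, with Bertini-type genericity taking care of the singularities. I expect the vanishing needed in the second step, over the non-complete smooth locus, to be the main obstacle.

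\textbf{Step 1: reduction to Picard groups.} Since $H$ is ample and base-point-free, Bertini's theorem (in Seidenberg's form for normality) shows that a general $X\in|H|$ is a normal projective variety with
\[
\operatorname{Sing}(X)\subseteq \operatorname{Sing}(F)\cap X .
\]
Because $F$ is normal, $\operatorname{Sing}(F)$ has codimension at least $2$. A general member of a base-point-free system meets each stratum of $\operatorname{Sing}(F)$ properly, so $\operatorname{Sing}(F)\cap X$ still has codimension at least $2$ in $X$. Set $U:=F\setminus\operatorname{Sing}(F)$ and $U_X:=U\cap X$; the latter is smooth and its complement in $X$ has codimension at least $2$. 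Removing a closed subset of codimension $\ge 2$ does not change the class group, so
\[
\operatorname{Cl}(F)\simeq\operatorname{Pic}(U), \qquad \operatorname{Cl}(X)\simeq\operatorname{Pic}(U_X),
\]
and under these identifications the restriction map becomes $\operatorname{Pic}(U)\to\operatorname{Pic}(U_X)$. It therefore suffices to prove the Lefschetz-type statement for this map.

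\textbf{Step 2: passage through the formal completion.} Let $\widehat U$ denote the formal completion of $U$ along $U_X$. I would factor the restriction map as
\[
\operatorname{Pic}(U)\to\operatorname{Pic}(\widehat U)\to\operatorname{Pic}(U_X).
\]

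For the second arrow, the obstructions and ambiguities in lifting a line bundle through the infinitesimal thickenings $X_k$ of $U_X$ lie in $H^1$ and $H^2$ of $\mathcal{O}_{U_X}\otimes H^{-k}$ for $k\geq 1$. I would show these vanish by combining two inputs:
\begin{itemize}
\item the depth of $\mathcal{O}_F$ along $\operatorname{Sing}(F)$ together with local cohomology, to compare cohomology on $U_X$ with cohomology on $X$;
\item Serre vanishing and duality for the negative twists $H^{-k}$ on the normal projective variety $X$.
\end{itemize}
For $\dim F\ge 4$ this should make $\operatorname{Pic}(\widehat U)\to\operatorname{Pic}(U_X)$ an isomorphism. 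In dimension $3$ the $H^2$ term need not vanish; instead it is controlled by finite-dimensional data, which is where the finitely generated cokernel comes from.

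For the first arrow, I would use Grothendieck's conditions $\mathrm{Lef}$ and $\mathrm{Leff}$ for the pair $(F,X)$ with $X$ ample. These follow from the depth of $\mathcal{O}_F$ at points of $X$ being at least $3$ away from a small set, again after discarding codimension $\geq 2$ loci. The conclusion I want is that a formal line bundle extends to an open neighbourhood of $U_X$ in $U$. It then extends further, as a reflexive rank-one sheaf, to all of $U$, because the complement of $X$ is affine-like relative to $H$ and the extension is unique by the $S_2$ property.

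\textbf{Step 3: injectivity in all dimensions $\geq 3$.} Suppose $D$ is a divisor on $F$ whose restriction to $X$ is principal. I would twist by powers of $H$ and use the exact sequences
\[
0\to\mathcal{O}_F(D)\otimes H^{-1}\to\mathcal{O}_F(D)\to\mathcal{O}_X(D|_X)\to 0 .
\]
The vanishing of $H^1$ of the negative twists $\mathcal{O}_F(\pm D)\otimes H^{-k}$, which holds for general $X$ by a Serre-vanishing and genericity argument, lets me lift the nowhere-vanishing section of $\mathcal{O}_X(D|_X)$ and of $\mathcal{O}_X(-D|_X)$ to $F$. This shows $D$ is principal.

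\textbf{Expected obstacle.} The hardest point is the vanishing in Step 2 over the non-complete scheme $U_X$. Ordinary Kodaira-type vanishing is not available there, and with only normality one does not a priori have enough depth along $\operatorname{Sing}(F)$. My first attempt would be to choose $X$ general, so that it cuts the non-Cohen--Macaulay locus properly. I would then use local cohomology, together with the requirement $\dim F\ge 4$, to push the bad loci into codimension where they do not affect $H^1$ and $H^2$. If that fails, the fallback is Ravindra--Srinivas's own route: a spreading-out and monodromy argument over the universal family of members of $|H|$, which is where the hypothesis "general" is genuinely used.
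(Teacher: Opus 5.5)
The paper gives no proof of this statement; it is quoted from Ravindra--Srinivas. So your argument has to stand on its own, and it does not.

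Step~1 is fine. Bertini and Seidenberg give a normal $X$ with $\operatorname{Sing}(X)=\operatorname{Sing}(F)\cap X$ of codimension $\geq 2$, and the problem becomes $\operatorname{Pic}(U)\to\operatorname{Pic}(U_X)$.

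The gap is Step~2, which carries the entire content of the theorem. The vanishing of $H^1$ and $H^2$ of $H^{-k}|_{U_X}$, $k\geq 1$, is not merely hard on the non-complete $U_X$; it is false in general. So no refinement of the depth/local-cohomology comparison can give it. With $Z=X\cap\operatorname{Sing}(F)$, the comparison runs through $H^i(X,\mathcal L)\to H^i(U_X,\mathcal L)\to H^{i+1}_Z(X,\mathcal L)\to H^{i+1}(X,\mathcal L)$. Normality only guarantees depth $2$ along $Z$, and generality of $X$ does not change $\operatorname{codim}_X Z=\operatorname{codim}_F\operatorname{Sing}(F)$.

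Concretely, let $S$ be a normal projective surface with one singular point $p$, and $H=\mathcal O_S(1)\boxtimes\mathcal O(1)$. The theorem holds for both varieties below, so it is the method that fails.
\begin{itemize}
\item For $F=S\times\mathbb P^1$, a general $X$ is isomorphic near the single point of $Z$ to $S$ near $p$. So $H^2_Z(X,H^{-k})\simeq H^2_p(S,\mathcal O_S)$ is infinite-dimensional, and hence so is $H^1(U_X,H^{-k})$ for every $k$.
\item For $F=S\times\mathbb P^2$, near $Z\simeq\mathbb P^1$ the divisor $X$ is a product $V\times\mathbb P^1$, with $V$ a neighbourhood of $p$, on which $H^{-k}$ is $\mathcal O_V\boxtimes\mathcal O(-k)$. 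So $H^3_Z(X,H^{-k})\simeq H^2_p(S,\mathcal O_S)\otimes H^1(\mathbb P^1,\mathcal O(-k))$ is infinite-dimensional for $k\geq 2$. Therefore so is $H^2(U_X,H^{-k})$, although $\dim F=4$.
\end{itemize}

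The algebraization step is not covered either. Grothendieck's Lef/Leff concern locally free sheaves on the completion $\widehat F$ of the \emph{proper} $F$ along $X$. They need depth hypotheses at closed points (depth $\geq 3$ for Leff) that normality does not supply and that cannot be arranged by discarding loci. A line bundle on $\widehat U$ is a different object: over $Z$ it gives at best a rank-one reflexive, possibly non-invertible, sheaf on $\widehat F$. Your fallback is the cited theorem itself.

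Step~3 does not compensate. The group $H^1(F,\mathcal O_F(\pm D)\otimes H^{-1})$ depends only on $F$ and $D$, so the generality of $X$ is irrelevant to it. You cannot show it vanishes before knowing that $D$ is principal. Serre (Enriques--Severi) vanishing only kills $H^1(F,\mathcal O_F(D)\otimes H^{-k})$ for $k\gg0$ depending on $D$. Exploiting that would require $D$ to be trivial on the thickening $kX$, which is again the formal step.

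What is missing is a model on which the infinitesimal vanishing really holds. On a resolution $\pi\colon\tilde F\to F$, a general $\tilde X=\pi^{-1}(X)$ is a smooth resolution of $X$. Kawamata--Viehweg for the big and nef $\pi^*H|_{\tilde X}$ gives $H^i(\tilde X,\pi^*H^{-k}|_{\tilde X})=0$ for $i<\dim\tilde X$ and $k\geq1$. That covers both $H^1$ and $H^2$ when $\dim F\geq 4$, but only $H^1$ when $\dim F=3$, matching the weaker conclusion there. Two further steps are still needed:
\begin{itemize}
\item algebraize formal line bundles along $\tilde X$, which is not ample in $\tilde F$, so SGA2 does not apply directly;
\item keep track of exceptional divisors via $\operatorname{Cl}(F)=\operatorname{Pic}(\tilde F)/\langle E_i\rangle$ and $\operatorname{Cl}(X)=\operatorname{Pic}(\tilde X)$ modulo the exceptional divisors of $\tilde X\to X$.
\end{itemize}
None of this is in your proposal.
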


\begin{prop}
    \label{prop:singhigherdim}

Let $F$ be a normal Gorenstein projective variety %regular in codimension 1 
and assume that $\operatorname{Cl}(F)$ is torsion-free and $\dim F \geq 4$.
Let $H$ be ample and base-point-free line bundle on $F$ such that $\omega_F \simeq H^{\otimes r}$
for some $r\in\mathbb Z$. Let $X\in |H^{\otimes d}|$ be a  general member, where
$d>0$ and $d\neq -r$. Then, $H|_X$ is
$\Aut(X)$-invariant.
\end{prop}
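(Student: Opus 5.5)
The plan is to imitate the proof of Proposition \ref{prop:Hautinvariant}, with the Lefschetz homotopy theorem replaced by the generalized Grothendieck--Lefschetz theorem of Ravindra--Srinivas. The aim is to reduce to Proposition \ref{prop:torsionfree}. For that we need three things: $X$ is a projective variety, $\Pic(X)$ is torsion-free, and $\omega_X$ is a nonzero power of $H|_X$.

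\textbf{Step 1 (the canonical sheaf).} Since $F$ is Gorenstein, it is Cohen--Macaulay and $\omega_F$ is a line bundle. Since $H^{\otimes d}$ is base-point-free, a general $X\in|H^{\otimes d}|$ is an effective Cartier divisor. Theorem \ref{Gorenstein} then gives
\[
\omega_X\simeq (\omega_F\otimes H^{\otimes d})|_X \simeq H|_X^{\otimes(r+d)},
\]
and $r+d\neq 0$ because $d\neq -r$. Note that $H|_X$ is ample and $\omega_X$ is invertible.

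\textbf{Step 2 (irreducibility and normality of $X$).} A general member of a base-point-free ample linear system on a normal variety of dimension at least $2$ is irreducible, by Bertini's theorem, since $\dim F\geq 4$. Reducedness, and indeed normality in the sense needed, also follow from Bertini-type results: $X$ is Cartier in the normal, hence $S_2$, variety $F$, so $X$ is $S_1$, and a general member is generically reduced. So $X$ is a projective variety in the paper's sense. Alternatively, connectedness follows from $H^1(F,H^{\otimes -d})=0$ by Kodaira vanishing on Cohen--Macaulay varieties, using Serre duality, as in Proposition \ref{prop:Hautinvariant}. I would cite Bertini (Jouanolou) for irreducibility.

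\textbf{Step 3 (torsion-freeness of $\Pic(X)$).} Apply Theorem \ref{thm:GeneralizedLefschetz} to the ample, base-point-free line bundle $H^{\otimes d}$. Since $\dim F\geq 4$, this gives an isomorphism $\operatorname{Cl}(F)\simeq\operatorname{Cl}(X)$, so $\operatorname{Cl}(X)$ is torsion-free. Next, $X$ is normal, since a general member of a base-point-free system on a normal variety is normal by Seidenberg's theorem. For a normal integral scheme the map $\Pic(X)\to\operatorname{Cl}(X)$ is injective, because a Cartier divisor that is principal as a Weil divisor is principal. Hence $\Pic(X)$ is torsion-free. Proposition \ref{prop:torsionfree} then shows that $H|_X$ is $\Aut(X)$-invariant.

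The main obstacle is Step 3, specifically the justification that a general $X$ is normal. This is what allows us to pass from $\operatorname{Cl}$ to $\Pic$. I would rely on Seidenberg's Bertini theorem for normality of general members of base-point-free linear systems. The Picard–class group comparison then works exactly as above.
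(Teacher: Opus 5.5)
Your proposal is correct and follows essentially the same route as the paper. Adjunction on the Cohen--Macaulay variety $F$ gives $\omega_X\simeq H|_X^{\otimes(r+d)}$ with $r+d\neq 0$, the Ravindra--Srinivas theorem applied to $H^{\otimes d}$ makes $\operatorname{Cl}(X)\simeq\operatorname{Cl}(F)$ torsion-free, and Proposition \ref{prop:torsionfree} concludes; your use of Seidenberg's theorem to make $X$ normal, so that $\Pic(X)\hookrightarrow\operatorname{Cl}(X)$, supplies the step from $\operatorname{Cl}$ to $\Pic$ that the paper's two-line proof leaves implicit.
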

\begin{proof}
Since $F$ is Gorenstein and therefore Cohen-Macaulay, we can apply Theorem \ref{Gorenstein} to obtain $\omega_X=H|^{\otimes r+d}_X$. Then, the result follows from Theorem \ref{thm:GeneralizedLefschetz} and
Proposition \ref{prop:torsionfree}.
\end{proof}

When $\dim F=3$, Bruzzo, Grassi and Lopez obtained that the 
map  $\operatorname{Cl}(F) \to \operatorname{Cl}(X) $ is in fact an isomorphism under more restrictive hypotheses.

\begin{thm}\label{AAU}\cite[Proposition 3.2]{BruzzoGrassiLopez2021}, based on \cite{RavindraSrinivas2009}. Let $F$ be a normal  $\mathbb{Q}$-factorial threefold with rational singularities, and let $H$ be a very ample line bundle on $F$. 
Assume that $\omega_F \otimes H$ is globally generated. Then:

\begin{enumerate}
    \item $\mathrm{Cl}(F) \simeq \mathrm{Cl}(X)$ for a very general $X \in |H|$.
   \item $\rho(X) = \rho(F)$ for a very general $X \in |H|$.
\end{enumerate}
    
\end{thm}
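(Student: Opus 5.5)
The statement is imported from \cite[Proposition 3.2]{BruzzoGrassiLopez2021}. The plan is to derive it from the Noether--Lefschetz theorem for class groups of Ravindra--Srinivas \cite{RavindraSrinivas2009}, and then pass from class groups to Picard numbers using $\mathbb{Q}$-factoriality.

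\textbf{Step 1: the class group statement (1).}
\begin{enumerate}
\item By Theorem \ref{thm:GeneralizedLefschetz}, for $\dim F=3$ and $X\in|H|$ general, the restriction $\operatorname{Cl}(F)\to\operatorname{Cl}(X)$ is already injective with finitely generated cokernel. So only surjectivity is at stake.
\item Surjectivity is an infinitesimal Noether--Lefschetz argument. Consider the universal family $\mathcal{X}\to U\subset|H|$ of general members. A class on a very general fibre that does not come from $F$ would give a nontrivial locally constant section of the relative class-group sheaf over $U$.
\item One rules this out by showing that the relevant infinitesimal period map is surjective. Via Koszul and Jacobian-ring type computations, this reduces to a surjectivity of multiplication maps of the shape
\[
H^0(F,\omega_F\otimes H)\otimes H^0(F,H)\to H^0(F,\omega_F\otimes H^{\otimes 2}).
\]
\item This multiplication map is where the hypotheses enter. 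Global generation of $\omega_F\otimes H$ together with very ampleness of $H$ give it, via Castelnuovo--Mumford regularity or a Green-type argument.
\item Rational singularities are used to work on a resolution $\tilde F\to F$ without changing $H^i(\mathcal{O})$ or $\omega$. There one compares Hodge structures, and the divisor classes are Hodge classes on $H^2$ of the resolved very general surface.
\end{enumerate}
I would invoke \cite{RavindraSrinivas2009} for this step rather than redo the Hodge theory. The only checking needed is that their hypotheses (normal threefold, $H$ very ample, $\omega_F\otimes H$ globally generated) are exactly those stated here.

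\textbf{Step 2: the Picard number statement (2).}
\begin{enumerate}
\item The singular locus of $F$ has dimension at most $1$. A general $X\in|H|$ therefore meets it in finitely many points, and $X$ is a normal surface.
\item Its singularities are general hyperplane sections of rational threefold singularities, hence rational. Rational surface singularities are $\mathbb{Q}$-factorial, so $\operatorname{Cl}(X)\otimes\mathbb{Q}=\operatorname{Pic}(X)\otimes\mathbb{Q}$. Similarly $\operatorname{Cl}(F)\otimes\mathbb{Q}=\operatorname{Pic}(F)\otimes\mathbb{Q}$ by hypothesis.
\item Tensoring (1) with $\mathbb{Q}$ gives $\operatorname{Pic}(F)_\mathbb{Q}\simeq\operatorname{Pic}(X)_\mathbb{Q}$.
\item To pass to Néron--Severi groups, compare $\operatorname{Pic}^0$. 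From $0\to H^{-1}\to\mathcal{O}_F\to\mathcal{O}_X\to0$ and the vanishing $H^1(F,H^{-1})=H^2(F,H^{-1})=0$, one gets $H^1(F,\mathcal{O}_F)\simeq H^1(X,\mathcal{O}_X)$.
\item The vanishing follows from Kawamata--Viehweg on a resolution, combined with rationality of the singularities.
\item Hence $\operatorname{Pic}^0(F)\to\operatorname{Pic}^0(X)$ is an isogeny, and $\rho(X)=\rho(F)$.
\end{enumerate}

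\textbf{Main obstacle.} I expect the hard part to be the surjectivity in Step 1, specifically verifying the infinitesimal Noether--Lefschetz condition on a singular $F$. This is the content of \cite{RavindraSrinivas2009}, and the global generation of $\omega_F\otimes H$ is exactly the input making the relevant multiplication map surjective. The remaining steps are routine consequences of rationality and $\mathbb{Q}$-factoriality.
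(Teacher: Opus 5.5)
Your plan matches the paper, which gives no proof of its own but imports the statement from Bruzzo--Grassi--Lopez, resting on Ravindra--Srinivas for (1). Your derivation of (2) from (1) is sound: a general $X$ has rational, hence $\mathbb{Q}$-factorial (Lipman), singularities, and $H^1(F,\mathcal{O}_F)\simeq H^1(X,\mathcal{O}_X)$ (Kawamata--Viehweg plus rationality) makes $\operatorname{Pic}^0(F)\to\operatorname{Pic}^0(X)$ an isogeny. However, drop the heuristic in Step 1 and in your ``main obstacle'' that global generation of $\omega_F\otimes H$ and very ampleness of $H$ make $H^0(F,\omega_F\otimes H)\otimes H^0(F,H)\to H^0(F,\omega_F\otimes H^{\otimes 2})$ surjective. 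This fails under the stated hypotheses: take the abelian threefold $F=E\times S$ with $H=\mathrm{pr}_1^*\mathcal{O}_E(3p)\otimes\mathrm{pr}_2^*M$, where $M$ is a very ample $(1,5)$-polarization; the image then has dimension at most $6\cdot 15<6\cdot 20=h^0(F,H^{\otimes 2})$. Step 1 should therefore rest purely on the citation.
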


As a consequence, we obtain the following corollary.

\begin{prop} \label{prop:singular3fold} Let $F$ be a normal, $\mathbb{Q}$-factorial Gorenstein threefold with rational singularities, and assume that $\operatorname{Cl}(F)$ is torsion-free. Let $H$ be a very ample line bundle on $F$ such that $\omega_F \simeq H^{\otimes r}$
for some $r\in \mathbb{Z}$. Let $X\in |H^{\otimes d}|$ be a very general member, where
$d>0$ and $d+r>0$. Then, $H|_X$ is
$\Aut(X)$-invariant.
    
\end{prop}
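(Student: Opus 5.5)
The proof will run parallel to that of Proposition \ref{prop:singhigherdim}, with Theorem \ref{AAU} taking the place of Theorem \ref{thm:GeneralizedLefschetz}. The goal is to apply Proposition \ref{prop:torsionfree} to $X$ with the ample line bundle $H|_X$. For that we need two facts: $\omega_X$ is a nonzero power of $H|_X$, and $\Pic(X)$ is torsion-free.

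\textbf{Step 1: adjunction.} Since $F$ is Gorenstein, it is Cohen--Macaulay. Moreover $X\in|H^{\otimes d}|$ is an effective Cartier divisor, so Theorem \ref{Gorenstein} gives
\[
\omega_X\simeq (\omega_F\otimes H^{\otimes d})|_X\simeq H|_X^{\otimes (r+d)},
\]
and $r+d\neq 0$ by hypothesis.

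\textbf{Step 2: apply Theorem \ref{AAU} to $H':=H^{\otimes d}$.} The divisor $X$ is a very general member of $|H'|$, and $H'$ is very ample because $H$ is and $d>0$. We must check that $\omega_F\otimes H'\simeq H^{\otimes(r+d)}$ is globally generated. This is exactly where the hypothesis $d+r>0$ enters: $H^{\otimes(r+d)}$ is a positive power of a very ample line bundle, so it is very ample and in particular globally generated. The remaining hypotheses of Theorem \ref{AAU} (normal, $\mathbb{Q}$-factorial, rational singularities) are assumed. Theorem \ref{AAU} then gives $\operatorname{Cl}(X)\simeq\operatorname{Cl}(F)$, which is torsion-free.

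\textbf{Step 3: pass from $\operatorname{Cl}(X)$ to $\Pic(X)$.} This is the step I expect to be the main obstacle, because $X$ may be singular. I would argue as follows. A very general $X\in|H'|$ avoids nothing in particular, but by Bertini it meets $\Sing F$ properly and is normal. Its singularities are then isolated, so $X$ is a normal surface, and $\Pic(X)\hookrightarrow\operatorname{Cl}(X)$ since Cartier divisors on a normal integral scheme inject into Weil divisor classes. Alternatively, it may suffice to note that $X$ is integral (it is a projective variety by convention), so the map $\Pic(X)\to\operatorname{Cl}(X)$ is injective whenever $X$ is normal. A subgroup of a torsion-free group is torsion-free, so $\Pic(X)$ is torsion-free.

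Combining Steps 1--3, Proposition \ref{prop:torsionfree} applies with $a=r+d\neq 0$, and therefore $H|_X$ is $\Aut(X)$-invariant.
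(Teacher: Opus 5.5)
Your proposal is correct and is essentially the paper's argument: adjunction via Theorem \ref{Gorenstein}, then Theorem \ref{AAU} applied to the very ample bundle $H^{\otimes d}$, with $d+r>0$ giving that $\omega_F\otimes H^{\otimes d}\simeq H^{\otimes(r+d)}$ is globally generated, and finally Proposition \ref{prop:torsionfree}. Your Step 3 usefully makes explicit the passage from torsion-freeness of $\operatorname{Cl}(X)$ to that of $\Pic(X)$, which the paper leaves implicit; note, however, that isolated singularities alone do not make $X$ normal, and normality should instead come either from Seidenberg's Bertini theorem or from Serre's criterion, since $X$ is Cohen--Macaulay (a Cartier divisor in the Gorenstein $F$) with finite singular locus.
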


\begin{proof} Since $H$ is very ample, 
 $H^{\otimes (r +d)}=\omega_F \otimes H^{\otimes d}$ is globally generated. Then the assertion follows from Theorem \ref{Gorenstein}, Theorem \ref{AAU} and Proposition \ref{prop:torsionfree}.
\end{proof}

\section{Extending automorphisms via section rings}
\label{sec.Extaut}
In this section, we present the main result of this paper, Theorem \ref{thm:main}. Let $F$ be a projective variety and $H$ a normally generated line bundle on $F$. Given $X\in |H^{\otimes d}|$, we establish conditions under which every automorphism of $X$ is induced by an automorphism of $F$. We then derive several corollaries providing more concrete settings in which these hypotheses are satisfied. Before proving the main result, we first establish conditions under which the section ring $R(X,H|_X)$ can be described in terms of $R(F,H)$. This will be a fundamental step in the proof of our main theorem.

\begin{prop} \label{coxring}
Let $F$ be a projective variety, $H$ an ample line bundle, and
$R(F,H)$ its section ring. For a homogeneous nonzero divisor
$f\in R(F,H)_d \setminus\{0\}$, let $X=V(f)\subset F$. If
$H^1(F,H^{\otimes(m-d)})=0$ for all $m\geq0$, then
$R(F,H)/(f)\simeq R(X, H|_X)$
as graded rings.
\end{prop}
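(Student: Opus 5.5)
The plan is to use the multiplication-by-$f$ exact sequence of sheaves on $F$ in every degree, and to assemble the resulting maps on global sections into a graded ring homomorphism. Since $f\in H^0(F,H^{\otimes d})$ is a nonzero section on the integral scheme $F$, it defines an effective Cartier divisor $X=V(f)$. For each $m\in\mathbb{Z}$ we therefore have the short exact sequence
\[
0\longrightarrow H^{\otimes(m-d)}\xrightarrow{\;\cdot f\;} H^{\otimes m}\longrightarrow H|_X^{\otimes m}\longrightarrow 0 .
\]
Taking cohomology gives
\[
0\to H^0(F,H^{\otimes(m-d)})\xrightarrow{\cdot f} H^0(F,H^{\otimes m})\xrightarrow{\rho_m} H^0(X,H|_X^{\otimes m})\to H^1(F,H^{\otimes(m-d)}).
\]
By hypothesis the last term vanishes for all $m\ge 0$, so each $\rho_m$ is surjective. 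Its kernel is the image of multiplication by $f$, namely $f\cdot R(F,H)_{m-d}$, which is exactly $(f)_m$; here we use the convention $R(F,H)_{k}=0$ for $k<0$, which is consistent because $H^0(F,H^{\otimes k})=0$ for $k<0$ by ampleness.

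Next we assemble $\rho=\bigoplus_m\rho_m\colon R(F,H)\to R(X,H|_X)$. This map is restriction of sections, so it is compatible with the multiplication maps $H^0(H^{\otimes m})\otimes H^0(H^{\otimes n})\to H^0(H^{\otimes(m+n)})$ on $F$ and on $X$. It is therefore a graded $\mathbb{C}$-algebra homomorphism. By the previous paragraph it is surjective in each degree with kernel $(f)$. Hence it induces an isomorphism of graded rings
\[
R(F,H)/(f)\xrightarrow{\;\sim\;}R(X,H|_X).
\]

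I do not expect a serious obstacle. The step needing the most care is identifying $\ker\rho_m$ with the degree-$m$ part of the ideal $(f)$, that is, checking $(f)_m=f\cdot R(F,H)_{m-d}$ including the low-degree cases $m<d$. In those degrees $(f)_m=0$, and $\rho_m$ is an isomorphism because $H^0(F,H^{\otimes(m-d)})=0$. The remaining point is that the restriction to $X$ uses the scheme structure cut out by $f$, so that $\mathcal{O}_X=\mathcal{O}_F/f\mathcal{O}_F(-d)$ after twisting; this is what makes the sheaf sequence exact. The assumption that $f$ is a nonzerodivisor is automatic here since $F$ is integral, and it guarantees injectivity on the left.
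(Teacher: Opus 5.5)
Your proposal is correct and takes essentially the same route as the paper. The paper also compares, degree by degree, the sequence $0\to R_{m-d}\xrightarrow{\cdot f}R_m\to (R/(f))_m\to 0$ with the cohomology sequence of $0\to H^{\otimes(m-d)}\to H^{\otimes m}\to H|_X^{\otimes m}\to 0$, and uses the vanishing of $H^1(F,H^{\otimes(m-d)})$ to get surjectivity. You are slightly more careful than the written proof in two places: you check that restriction is multiplicative, and you check that $(f)_m=f\cdot R_{m-d}$ in degrees $m<d$, which relies on $H^0(F,H^{\otimes k})=0$ for $k<0$ and hence implicitly on $\dim F\geq 1$.
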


\begin{proof}
In order to simplify notation,
we fix $R:=R(F,H)$. Since $f$ is a nonzero divisor, we have the following exact sequence of graded modules:

        $$0 \rightarrow R(-d) \xrightarrow{\cdot f} R\rightarrow R/(f)\rightarrow 0. $$

Since $R(m)$ is a free module of rank 1 over $R$, therefore flat,
we obtain the following exact sequence:

  $$0 \rightarrow R(m-d) \xrightarrow{\cdot f} R (m)\rightarrow R/(f) (m)\rightarrow 0. $$

Denoting by $R_m$ the group of homogeneous elements of $R$ of degree
$m$ and noticing that $R(m)_0=R_m$
and using the analogous equalities and notation for the other rings, we have the following exact sequence, considering the degree 0 components,

  $$0 \rightarrow R_{m-d} \xrightarrow{\cdot f} R_{m} \rightarrow (R/(f))_m\rightarrow 0. $$

After tensorizing the standard exact sequence associated to the divisor $X$, we also obtain
the following exact sequence: $$0 \rightarrow H^{\otimes m-d} \rightarrow H^{\otimes m} \rightarrow (H|_X)^{\otimes m} \rightarrow 0. $$

From the hypotheses, for all $m \geq 0$ we have the following diagram:
\begin{center}

\begin{tikzcd}
0 \arrow[r] \arrow[d] &  R_{m-d} \arrow[r, "\cdot f"] \arrow[d,"="]              &  R_{m} \arrow[r] \arrow[d,"="]                &  (R/(f))_m \arrow[r]              & 0 \arrow[d] \\
0 \arrow[r]           & {H^{0}(F,H^{\otimes m - d} )} \arrow[r,"\cdot f" ] & {H^{0}(F,H^{\otimes m} )} \arrow[r] & {H^{0}(X,(H|_X)^{\otimes m})} \arrow[r] & 0          
\end{tikzcd}
\end{center}

Thus, $(R/(f))_m \simeq R_m / \ker (\cdot f) \simeq  H^{0}(X, (H|_X)^{\otimes m})$, concluding our result.

\end{proof}

\begin{thm}\cite[Thm 7.80]{SchiffmanSommese} \label{vanishing} Let $F$ be a normal  projective algebraic variety and let $H$ be an ample invertible sheaf. Then, \begin{enumerate}

   \item  IF $F$ is Cohen-Macaulay $H^b
(F, \omega_F \otimes H) = 0 \text{, for }b > \dim \Sing (F)$.
    
\item If $F$ has rational singularities, then $H^b
(F, \omega_F \otimes H) = 0 \text{ for } b > 0$.

\end{enumerate}
\end{thm}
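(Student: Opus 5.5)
Both parts should follow by comparing $\omega_F\otimes H$ with the pushforward of the canonical bundle of a resolution, where Kawamata--Viehweg vanishing is available. Fix a resolution of singularities $\pi\colon F'\to F$ that is an isomorphism over $F\setminus\Sing(F)$, and set $n=\dim F$ and $s=\dim\Sing(F)$. Since $H$ is ample on $F$ and $\pi$ is birational, $\pi^*H$ is nef and big on the smooth projective variety $F'$. Kawamata--Viehweg vanishing then gives
\[
H^{j}(F',\omega_{F'}\otimes\pi^*H)=0 \qquad \text{for } j>0 .
\]

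\emph{Step 1: descend this vanishing to $F$.} By Grauert--Riemenschneider vanishing, $R^q\pi_*\omega_{F'}=0$ for $q>0$. Combining the projection formula with the Leray spectral sequence, whose higher rows now vanish, we get
\[
H^p(F,\pi_*\omega_{F'}\otimes H)\simeq H^p(F',\omega_{F'}\otimes\pi^*H)=0 \qquad \text{for } p>0 .
\]

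\emph{Step 2: compare $\pi_*\omega_{F'}$ with $\omega_F$.} There is a natural trace map $\pi_*\omega_{F'}\to\omega_F$, and it is an isomorphism over the smooth locus. Because $F$ is normal and Cohen--Macaulay, $\omega_F$ is the dualizing sheaf and is torsion free (indeed reflexive). The sheaf $\pi_*\omega_{F'}$ is also torsion free, so the trace map is injective. This gives a short exact sequence
\[
0\to\pi_*\omega_{F'}\to\omega_F\to Q\to 0, \qquad \operatorname{Supp}Q\subseteq\Sing(F).
\]
Twist this sequence by $H$ and take cohomology. The sheaf $Q\otimes H$ is supported in dimension at most $s$, so $H^b(F,Q\otimes H)=0$ for $b>s$. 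Together with Step 1, the long exact sequence gives $H^b(F,\omega_F\otimes H)=0$ for $b>s$, which proves part (1).

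\emph{Step 3: rational singularities.} If $F$ has rational singularities, then $F$ is Cohen--Macaulay and $\pi_*\omega_{F'}\simeq\omega_F$ by Kempf's criterion. Hence $Q=0$, and Step 1 gives vanishing for all $b>0$, which is part (2).

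The main obstacle is Step 2, namely justifying that the trace map is injective with cokernel supported on the singular locus. This is where normality and the Cohen--Macaulay hypothesis enter: they are needed so that $\omega_F$ is the dualizing sheaf and is torsion free. The input for part (2) is Kempf's identification $\pi_*\omega_{F'}=\omega_F$. The remaining steps are formal consequences of the standard vanishing theorems.
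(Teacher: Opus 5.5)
The paper does not prove this statement; it imports it directly from Shiffman--Sommese, so there is no internal proof to compare yours with. Your argument is correct and self-contained. Kawamata--Viehweg plus Grauert--Riemenschneider give $H^p(F,\pi_*\omega_{F'}\otimes H)=0$ for $p>0$. The trace map $\pi_*\omega_{F'}\hookrightarrow\omega_F$ is injective with cokernel supported on $\Sing(F)$, so Grothendieck vanishing handles $b>\dim\Sing(F)$. In the rational case, Kempf's criterion kills that cokernel.

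What your route adds over the bare citation is clarity about the hypotheses:

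\begin{enumerate}
\item Strictly, you prove vanishing for $b>\max\{0,\dim\Sing(F)\}$. This is the correct reading of (1): for smooth $F$, the literal range would include $b=0$, where the claim is false. Only $b\geq 1$ is used in Proposition \ref{h1vanish}.
\item The Cohen--Macaulay hypothesis is inessential in your Step 2. On any normal variety, $\omega_F\simeq\mathcal{O}_F(K_F)$ is reflexive, hence torsion free, and it receives the trace map. So the same argument gives (1) for every normal $F$. In the paper, the CM hypothesis is really needed for the Serre duality of Theorem \ref{duality}, which is used together with this vanishing in Proposition \ref{h1vanish}.
\end{enumerate}
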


We recall the well-known Serre duality theorem.

\begin{thm}\cite[Corollary 7.7]{Hartshorne} \label{duality}
    Let $F$ be a projective Cohen-Macaulay variety. Then for any locally free sheaf $\mathcal{L}$ on $F$, there are natural isomorphisms
$$ H^i(F, \mathcal{L})^ \vee\simeq H^{n-i}( F, \mathcal{L}^\vee \otimes \omega_F).  $$
\end{thm}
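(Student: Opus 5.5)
The plan is to follow the classical route: reduce to duality on projective space, and use the Cohen--Macaulay hypothesis to make the relevant $\mathcal{E}xt$ sheaves vanish outside a single degree. Fix a closed embedding $F\hookrightarrow \mathbb{P}^N$, say by a very ample line bundle, and let $n=\dim F$ and $r=N-n$ its codimension. Set
\[
\omega_F:=\mathcal{E}xt^{r}_{\mathbb{P}^N}(\OO_F,\omega_{\mathbb{P}^N}).
\]
I would take as known that this is the dualizing sheaf of $F$ in the sense of a representing object for $\mathcal{F}\mapsto H^n(F,\mathcal{F})^\vee$.

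\textbf{Step 1: duality on $\mathbb{P}^N$.} First establish that for every coherent sheaf $\mathcal{G}$ on $\mathbb{P}^N$ the Yoneda pairing
\[
\operatorname{Ext}^{i}(\mathcal{G},\omega_{\mathbb{P}^N})\times H^{N-i}(\mathbb{P}^N,\mathcal{G})\to H^N(\mathbb{P}^N,\omega_{\mathbb{P}^N})\simeq\CC
\]
is perfect. The case $\mathcal{G}=\OO(q)$ is the explicit Čech computation of $H^\ast(\mathbb{P}^N,\OO(q))$. A general $\mathcal{G}$ has a presentation by sums of $\OO(-q)$ with $q\gg0$; both sides are contravariant $\delta$-functors in $\mathcal{G}$, and for $i<N$ they are coeffaceable, since $H^{N-i}(\OO(-q))=0$ when $q\gg0$. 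Hence they are universal and agree in all degrees.

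\textbf{Step 2: local vanishing from Cohen--Macaulayness.} Show that $\mathcal{E}xt^{j}_{\mathbb{P}^N}(\OO_F,\omega_{\mathbb{P}^N})=0$ for $j\neq r$. This is local. At a point $x\in F$, the local ring $\OO_{F,x}$ is Cohen--Macaulay of dimension $n$ over the regular local ring $\OO_{\mathbb{P}^N,x}$ of dimension $N$. By Auslander--Buchsbaum it has projective dimension $r$, so the Ext vanishes for $j>r$. Local duality, or a regular-sequence induction on depth, gives vanishing for $j<r$. This step is exactly where the hypothesis enters, and I expect it to be the main obstacle: the commutative algebra (depth, projective dimension, and the Ext characterization of depth) must be assembled carefully.

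\textbf{Step 3: transfer to $F$ and twist by $\mathcal{L}$.} Let $\mathcal{F}$ be an $\OO_F$-module. By Step 2, the spectral sequence $\operatorname{Ext}^p_F(\mathcal{F},\mathcal{E}xt^q_{\mathbb{P}^N}(\OO_F,\omega_{\mathbb{P}^N}))\Rightarrow\operatorname{Ext}^{p+q}_{\mathbb{P}^N}(\mathcal{F},\omega_{\mathbb{P}^N})$ degenerates. This gives
\[
\operatorname{Ext}^{i}_F(\mathcal{F},\omega_F)\simeq \operatorname{Ext}^{i+r}_{\mathbb{P}^N}(\mathcal{F},\omega_{\mathbb{P}^N})\simeq H^{n-i}(F,\mathcal{F})^\vee,
\]
where the second isomorphism is Step 1. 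Now take $\mathcal{F}=\mathcal{L}$ locally free. Then $\operatorname{Ext}^{i}_F(\mathcal{L},\omega_F)\simeq \operatorname{Ext}^i_F(\OO_F,\mathcal{L}^\vee\otimes\omega_F)=H^{i}(F,\mathcal{L}^\vee\otimes\omega_F)$. Replacing $i$ by $n-i$ yields $H^i(F,\mathcal{L})^\vee\simeq H^{n-i}(F,\mathcal{L}^\vee\otimes\omega_F)$. Naturality follows because every identification used is functorial in $\mathcal{L}$.
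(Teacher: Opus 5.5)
The paper does not prove this statement; it only cites Hartshorne, Corollary III.7.7. Your argument is correct and shares the architecture of that source:
\begin{itemize}
\item duality on $\mathbb{P}^N$ (Theorem III.7.1);
\item the definition $\omega_F=\mathcal{E}xt^r_{\mathbb{P}^N}(\OO_F,\omega_{\mathbb{P}^N})$;
\item the Cohen--Macaulay hypothesis entering only through Auslander--Buchsbaum, to kill $\mathcal{E}xt^q$ for $q>r$.
\end{itemize}

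The genuine difference is the last step. Hartshorne (Theorem III.7.6) extends $\theta^0\colon\operatorname{Hom}_F(\mathcal{F},\omega_F)\to H^n(F,\mathcal{F})^\vee$ to a morphism of $\delta$-functors on $\operatorname{Coh}(F)$. He shows it is an isomorphism by proving that $H^{n-i}(F,-)^\vee$ is coeffaceable for $i>0$, i.e.\ that $H^{n-i}(F,\OO_F(-q))=0$ for $q\gg0$. He gets this vanishing from duality on $\mathbb{P}^N$ together with the vanishing of $\mathcal{E}xt^{r+i}_{\mathbb{P}^N}(\OO_F,\omega_{\mathbb{P}^N})$. The vanishing for $q<r$ is used separately (Lemmas III.7.3--7.4), only to show that $\mathcal{E}xt^r$ is dualizing.

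You instead feed both vanishings at once into the change-of-rings spectral sequence. This gives $\operatorname{Ext}^i_F(\mathcal{F},\omega_F)\simeq\operatorname{Ext}^{i+r}_{\mathbb{P}^N}(\mathcal{F},\omega_{\mathbb{P}^N})$ uniformly in $i$. As a consequence, your ``take as known'' is superfluous: the case $i=0$ is exactly the dualizing property.

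Your route is shorter and is the $i^{!}$ formalism in miniature, at the price of the Grothendieck spectral sequence. Hartshorne's stays inside $\delta$-functor theory and also yields the converse: these isomorphisms characterize Cohen--Macaulay equidimensional schemes.

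There is one slip in Step 1. Coeffaceability is needed for $i>0$, not $i<N$; for $i=0$ one has $H^N(\mathbb{P}^N,\OO(-q))\neq 0$ once $q\geq N+1$. You must also check coeffaceability on the $\operatorname{Ext}$ side, via $\operatorname{Ext}^i(\OO(-q),\omega_{\mathbb{P}^N})\simeq H^i(\mathbb{P}^N,\omega_{\mathbb{P}^N}(q))=0$ for $i>0$ and $q\gg0$.
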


In the next proposition, we see some sufficient conditions for the required vanishing in Proposition \ref{coxring}.

\begin{prop} \label{h1vanish}
Let $F$ be a Gorenstein, normal projective variety and $\dim F\geq2$. Assume that either $F$ has only isolated singularities or $F$ has rational singularities. Let
$H$ be an ample line bundle on $F$. 
\begin{enumerate}
\item $\omega_F\simeq\mathcal{O}_F$ and $H^1(F, O_F) = 0$ , or
\item $F$ is a Fano and $\omega_F\simeq H^{\otimes r}$ for some integer $r<0$.
\end{enumerate}

Then, for any positive integer $d$, $H^1(F,H^{\otimes(m-d)})=0$ for all $m\geq0$.
\end{prop}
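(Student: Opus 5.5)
We write $k := m-d$, so that $k$ ranges over all integers $\geq -d$. The claim is that $H^1(F,H^{\otimes k})=0$ for every such $k$. We split into three ranges: $k<0$, $k=0$ and $k>0$. In each range we reduce, via Serre duality (Theorem \ref{duality}, available since Gorenstein implies Cohen--Macaulay and $H^{\otimes k}$ is locally free) or directly, to a vanishing of the form $H^b(F,\omega_F\otimes L)=0$ with $L$ ample. That vanishing is supplied by Theorem \ref{vanishing}: part (2) in the rational singularities case, and part (1) when singularities are isolated, since then $\dim \Sing(F)\leq 0$ and every $b>0$ is allowed.

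\emph{Case $k=0$.} We need $H^1(F,\OO_F)=0$. In situation (1) this is an assumption. In situation (2), write $\OO_F=\omega_F\otimes H^{\otimes(-r)}$. Since $-r>0$, the sheaf $H^{\otimes(-r)}$ is ample, and Theorem \ref{vanishing} gives $H^1(F,\OO_F)=0$.

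\emph{Case $k>0$.} Write $H^{\otimes k}=\omega_F\otimes(\omega_F^{-1}\otimes H^{\otimes k})$. In situation (1) the twist $\omega_F^{-1}\otimes H^{\otimes k}=H^{\otimes k}$ is ample. In situation (2) it equals $H^{\otimes(k-r)}$ with $k-r>0$, so it is again ample. Theorem \ref{vanishing} with $b=1$ then gives the vanishing.

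\emph{Case $k<0$.} This case is nonempty when $d\geq1$. Set $n=\dim F\geq 2$. Serre duality gives
\[
H^1(F,H^{\otimes k})^\vee \simeq H^{n-1}(F,\omega_F\otimes H^{\otimes(-k)}),
\]
where $H^{\otimes(-k)}$ is ample. Since $n\geq 2$ we have $b=n-1\geq 1$, so Theorem \ref{vanishing} applies. This is the only place where the hypothesis $\dim F\geq 2$ is used: for a curve this degree would be $0$.

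The main point requiring care is checking that Theorem \ref{vanishing} really applies. It needs $F$ normal, which is assumed. In the isolated-singularities branch it needs the Cohen--Macaulay property, which follows from Gorenstein, together with $b>\dim\Sing F$. If $F$ is smooth we read $\dim\Sing F=-1$, and the statement is just Kodaira vanishing. Beyond this bookkeeping, the argument is a direct case check.
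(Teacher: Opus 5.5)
Your proof is correct and follows the paper's argument. Both reduce every case to Theorem \ref{vanishing}: directly for $\omega_F\otimes(\text{ample})$, and after Serre duality (Theorem \ref{duality}) in degree $n-1\geq 1$ for negative twists, with $H^1(F,\OO_F)=0$ as the extra input when $\omega_F$ is trivial. The paper splits the range of $k=m-d$ at $k=r$ rather than at $k=0$, so in the Fano case for $r<k<0$ it uses the direct vanishing where you use duality; both are valid.
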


\begin{proof}
   
Let $\omega_F\simeq H^{\otimes r}$, with $r \leq 0$.
Suppose first that $m-d>r$. Writing $m-d=n+r$ with $n>0$, we have $H^{\otimes(m-d)}\simeq \omega_F\otimes H^{\otimes n}.$
Since $H^{\otimes n}$ is ample, Theorem \ref{vanishing} gives $H^1(F,H^{\otimes(m-d)})=0.$
Suppose next that $m-d<r$. Then $d-m>-r\geq0$,
so $H^{\otimes(d-m)}$ is ample. By Theorems \ref{duality} and \ref{vanishing},$$H^1(F,H^{\otimes(m-d)})^\vee
\simeq H^{N-1}\bigl(F,\omega_F\otimes H^{\otimes(d-m)}\bigr)
=0,$$
\noindent where $N=\dim F$. Finally, suppose that $m-d=r$. If $F$ is Fano, then $r<0$, so
$H^{\otimes(d-m)}=H^{\otimes(-r)}$ is ample, and the same argument applies. If $\omega_F\simeq\mathcal O_F$,
then $r=0$, and hence $H^1(F,H^{\otimes(m-d)})=H^1(F,\mathcal O_F)=0.$

\end{proof}

\begin{nt}
Given $X\xhookrightarrow{i} F$ projective varieties, and $H$ an ample line bundle on $F$, we denote by $$\Aut(F;X):=\{g \in \Aut(F) \mid g(X) = X \}$$
the group of automorphisms of $F$
preserving $X$.
\end{nt}
\begin{lem}\label{lem:autFXFH}
Let $F$ be a normal projective variety with $\dim F\geq 3$, and let
$H$ be an ample and base-point-free line bundle on $F$. Let
$X\in |H^{\otimes d}|$ be a general projective variety. Assume that $H|_X$ is
$\Aut(X)$-invariant. Then $$\Aut(F;X)\subseteq \Aut(F,H).$$
The same conclusion holds if $F$ is smooth and $X$ is a smooth
member of $|H^{\otimes d}|$.
\end{lem}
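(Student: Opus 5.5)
Given $g\in\Aut(F;X)$, the plan is to show $g^*H\simeq H$ by restricting to $X$ and using injectivity of the restriction map on Picard (or class) groups. Since $g(X)=X$, the restriction $g|_X$ is an automorphism of $X$. By hypothesis $H|_X$ is $\Aut(X)$-invariant, so $(g|_X)^*(H|_X)\simeq H|_X$. Moreover $(g^*H)|_X\simeq (g|_X)^*(H|_X)$, because $g\circ i = i\circ g|_X$ for the inclusion $i\colon X\hookrightarrow F$. Hence the line bundle $L:=g^*H\otimes H^{-1}$ on $F$ satisfies $L|_X\simeq\OO_X$. It then suffices to show that $L$ is trivial.

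\textbf{Normal case.} Let $F$ be normal with $\dim F\geq 3$, $H$ ample and base-point-free, and $X\in|H^{\otimes d}|$ general. Then $H^{\otimes d}$ is also ample and base-point-free, so Theorem \ref{thm:GeneralizedLefschetz} applies to the linear system $|H^{\otimes d}|$. For general $X$ it gives that $\operatorname{Cl}(F)\to\operatorname{Cl}(X)$ is injective, even when $\dim F=3$. Since $F$ is normal, $\Pic(F)\hookrightarrow\operatorname{Cl}(F)$ via Cartier divisors modulo linear equivalence. A general $X$ is normal: it is a general member of a base-point-free system on a normal variety, so Bertini applies. Thus $\Pic(X)\hookrightarrow\operatorname{Cl}(X)$ as well, and these maps are compatible with restriction. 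The class of $L$ maps to the class of $L|_X$, which is trivial, so $L$ is trivial in $\operatorname{Cl}(F)$ and hence in $\Pic(F)$. Therefore $g^*H\simeq H$.

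\textbf{Smooth case.} For $F$ smooth and $X$ an arbitrary smooth member, I would instead use the Lefschetz hyperplane theorem for the ample divisor $X$. Theorem \ref{homotopy} gives $\pi_i(F,X)=0$ for $i<n=\dim F$, and hence $H^2(F,\mathbb{Z})\to H^2(X,\mathbb{Z})$ is injective (indeed bijective for $n\geq 4$, injective for $n=3$). Since $c_1(L|_X)=0$, we get $c_1(L)=0$, so $L\in\Pic^0(F)$. To conclude we also need injectivity of $H^1(F,\OO_F)\to H^1(X,\OO_X)$. Its kernel is the image of $H^1(F,H^{-d})$, which vanishes by Kodaira vanishing and Serre duality, as in the proof of Proposition \ref{prop:Hautinvariant}, since $\dim F\geq 2$. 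Comparing the exponential sequences of $F$ and $X$ then shows that $\Pic(F)\to\Pic(X)$ is injective, so $L\simeq\OO_F$.

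\textbf{Main obstacle.} The delicate step is the normal case. One must make sure the class-group injectivity applies to $|H^{\otimes d}|$ and not only to $|H|$. One must also check that ``general'' still allows passing from $\operatorname{Cl}$ to $\Pic$ on $X$, which requires the normality of a general $X$. If the normality of $X$ is problematic, the fallback is to compare only Cartier classes: $\Pic(F)\to\Pic(X)\to\operatorname{Cl}(X)$ factors $\Pic(F)\to\operatorname{Cl}(F)\to\operatorname{Cl}(X)$ on the locus where $X$ meets the regular locus of $F$ in codimension one.
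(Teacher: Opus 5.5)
Your proposal is correct and takes essentially the paper's approach: show that $g^*H\otimes H^{-1}$ is trivial on $X$ using the $\Aut(X)$-invariance of $H|_X$, then conclude by injectivity of $\operatorname{Cl}(F)\to\operatorname{Cl}(X)$ (Ravindra--Srinivas, applied to $|H^{\otimes d}|$) in the normal case and of $\Pic(F)\to\Pic(X)$ (classical Lefschetz) in the smooth case. Your additions fill in details the paper leaves implicit: the inclusion $\Pic\subseteq\operatorname{Cl}$ for normal $F$ and a normal general $X$, and an exponential-sequence proof of the Lefschetz injectivity. In that proof the four-lemma step also needs $H^1(F,\mathbb{Z})\to H^1(X,\mathbb{Z})$ to be surjective, which follows from the same vanishing $\pi_i(F,X)=0$ for $i<n$ that you already cite.
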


\begin{proof}
   Let $g\in \Aut(F;X)$,
 $X \xhookrightarrow{i} F$ be the inclusion, then $g|_{X}^*(i^*H) \simeq i^*g^*H.$ Since $i^*H := H|_X$, and $g|_X^*(H|_X) \simeq H|_X $, we have $i^*g^*H \simeq i^*H.$
As $\dim F \ge 3$, the map $\operatorname{Cl}(F) \xhookrightarrow{i^*} \operatorname{Cl}(X)$ is injective by Theorem \ref{thm:GeneralizedLefschetz},
hence $g^*H \simeq H $. In the smooth case, we use the classical form of Lefschetz theorem to obtain $\operatorname{Pic}(F) \xhookrightarrow{i^*} \operatorname{Pic}(X)$ is injective.
\end{proof}

\begin{rmk} We defined $t_1(A)$ for a finitely generated $\mathbb{C}$-algebra $A$ in Section \ref{2}.
Here, we stablish the convention that when
$A= \mathbb{C}[T_0, \dots, T_N]$, we set $t_1(\mathbb{C}[T_0, \dots, T_N])=1$.
\end{rmk}

We are now ready to state our main theorem.

\begin{thm} \label{thm:main}
 Let $F$ be a projective variety and $H$ normally generated line bundle on $F$ and $R(F,H)$ its section ring. Let $X \in |H^{\otimes d}|$ a projective variety, for $d >t_1(R(F,H))$. Suppose 
 \begin{enumerate}
     \item $H^1(F,H^{\otimes(m-d)})=0$ for all $m\geq0$,
     \item   $H|_X$ is $\Aut(X)$-invariant,
    
     \item $\Aut(F;X)\subseteq \Aut(F,H).$
 \end{enumerate}

 Then,
$$\Aut(X)\simeq \Aut(F;X) \simeq\Aut(\mathbb{P}^N;X)$$
where $i_H \colon F \hookrightarrow \mathbb{P}^N$ is the embedding induced by the complete linear system $|H|$.
\end{thm}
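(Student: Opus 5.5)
The plan is to assemble the four steps outlined in the introduction into one chain of isomorphisms, each compatible with restriction. Write $X=V(f)$ with $f\in R(F,H)_d=H^0(F,H^{\otimes d})$ nonzero. Since $F$ is integral, $R(F,H)$ is a domain, so $f$ is a nonzero divisor.

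\textbf{Step 1: the section ring of $X$.} By hypothesis (1) and Proposition \ref{coxring}, we have $R(X,H|_X)\simeq R(F,H)/(f)$ as graded rings. By Proposition \ref{restrictionNG}(2), $H|_X$ is normally generated. Hypothesis (2) gives $\Aut(X)=\Aut(X,H|_X)$, so Corollary \ref{cor:aut=autgr} yields
\[
\Aut(X)\simeq \Autgr(R(F,H)/(f))/\CC^*.
\]

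\textbf{Step 2: lifting to $R(F,H)$ and identifying with $\Aut(F;X)$.} Write $R(F,H)=\CC[T_0,\dots,T_N]/I$ with the $T_i$ a basis of $H^0(F,H)$. This is possible because $H$ is normally generated. Then $I_1=0$, i.e.\ $\beta_{1,1}=0$. When $I=0$ the convention $t_1=1$ still gives $d>1\geq$ all generator degrees. Since $d>t_1(R(F,H))$, Proposition \ref{prop:lift2} gives
\[
\Autgr(R(F,H),R(F,H)/(f))\simeq \Autgr(R(F,H)/(f)),
\]
and this isomorphism is compatible with the scalar $\CC^*$-actions of Remark \ref{C}.

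Proposition \ref{prop:gradedaut_autY} identifies $\Autgr(R(F,H))/\CC^*$ with $\Aut(F,H)$. I then check that under this identification the subgroup preserving $(f)$ corresponds to $\Aut(F,H)\cap\Aut(F;X)$. A graded automorphism $\varphi$ with $\varphi(f)=\lambda f$ induces a map on $\Proj$ sending $V(f)$ to $V(f)$. Conversely, if $g\in\Aut(F,H)$ preserves $X$, then the constructed $\varphi_d(f)=\alpha_d(h^*f)$ vanishes exactly on $h^{-1}(X)=X$. Because $X$ is an integral Cartier divisor, this section is a scalar multiple of $f$. By hypothesis (3), $\Aut(F,H)\cap\Aut(F;X)=\Aut(F;X)$. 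So we obtain
\[
\Aut(X)\simeq\Autgr(R(F,H),R(F,H)/(f))/\CC^*\simeq\Aut(F;X).
\]

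I must also verify that the composite isomorphism is the restriction map. The lifted graded automorphism and the original one agree modulo $f$. Taking $\Proj$ of the quotient map $R(F,H)\to R(F,H)/(f)$ gives the closed embedding $X\hookrightarrow F$. Compatibility of $\Proj$ with this surjection then shows that the induced automorphism of $F$ restricts to the given automorphism of $X$.

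\textbf{Step 3: passing to $\mathbb{P}^N$.} Here I use the same argument with $L=\CC[T_0,\dots,T_N]$ in place of $R(F,H)$. Proposition \ref{prop:lift2} already gives
\[
\Autgr(\CC[T_0,\dots,T_N],R(F,H)/(f))\simeq\Autgr(R(F,H)/(f)),
\]
meaning linear automorphisms preserving $I+(\tilde f)$. Modulo scalars these are the elements of $\mathrm{PGL}(N+1)$ preserving $X\subset\mathbb{P}^N$. Here I note that, since $H|_X$ is normally generated and $i_H(X)$ is nondegenerate, the homogeneous ideal of $X$ in $\mathbb{P}^N$ is exactly $I+(\tilde f)$. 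Such a transformation automatically preserves $I$ by the proof of Proposition \ref{prop:lift2}, hence preserves $F$. Lemma \ref{helpfullemma} makes the map to $\Aut(X)$ injective.

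I expect the main obstacle to be this bookkeeping: checking that the abstract isomorphisms of graded groups really correspond to the geometric restriction maps, and in particular the precise matching of the stabilizer of $(f)$ with $\Aut(F;X)$.
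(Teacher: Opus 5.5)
Your proposal is correct and follows essentially the same route as the paper. It uses Proposition~\ref{coxring} with Corollary~\ref{cor:aut=autgr} for $\Aut(X)$, Proposition~\ref{prop:lift2} for lifting to $R(F,H)$ and to $\mathbb{C}[T_0,\dots,T_N]$, and Proposition~\ref{prop:gradedaut_autY} together with hypothesis (3) to match the stabilizer of $(f)$ with $\Aut(F;X)$. The one minor difference is in showing that each $g\in\Aut(F;X)$ lifts: you check directly that the graded automorphism attached to $g$ sends $f$ to a scalar multiple of itself, since both sections have divisor $X$. The paper instead restricts $g$ to $X$, lifts the induced automorphism of $R(F,H)/(f)$ via Proposition~\ref{prop:lift2}, and appeals to uniqueness of the extension in $\Aut(F,H)$; your version is slightly more direct.
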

\begin{proof}
Consider $f$ to be the homogeneous element of $R(F,H)_d$ such that $X= V(f) \subset F$ and denote by $R(X, H|_X)$
the section ring of $H|_X$.
Then, by Proposition \ref{coxring},
$R(X, H|_X)\simeq R(F,H)/(f) .$ 
Since $H|_X$ is $\Aut(X)$-invariant, by Corollary \ref{cor:aut=autgr}, and normally generated by Proposition \ref{restrictionNG}, hence
\begin{equation} \label{eq3}
    \Aut(X)\simeq \frac{\Autgr\bigl(R(F,H)/(f)\bigr)}{\mathbb{C}^*}.
\end{equation}

Suppose $F \neq \mathbb{P}^N$, in this case, since $H$ is normally generated and hence very ample by Remark \ref{ngisvample}, the image of $F$ under the embedding $i_{H}\colon F \hookrightarrow\mathbb{P}^N$ is nondegenerate. Therefore, its homogeneous ideal $I_F \subset \mathbb{C}[T_0, \dots, T_N]$ contains no linear forms, consequently $\beta_{1,1}(R(F,H))=0$. Thus, Propositions \ref{prop:lift1} and \ref{prop:lift2} apply for every $f$ with degree $d > t_1(R(F,H))$ and yield: 
\begin{equation}\label{eq4}
\frac{\Autgr\bigl(R(F,H)/(f)\bigr)}{\mathbb{C}^*}\simeq \frac{\Autgr\bigl(R(F,H), R(F,H)/(f)\bigr)}{ \mathbb{C}^*}\simeq \frac{\Autgr\bigl( \mathbb{C}[T_0, \dots, T_N], R(F,H)/(f)\bigr)}{\mathbb{C}^*}.
\end{equation}

Suppose $F= \mathbb{P}^N$, in this case
$R(X, H|_X)= \mathbb{C}[T_0, \dots, T_N]/(f)$ and Proposition \ref{prop:lift1} can only be applied if
$\beta_{1,1}(R(X,H|_X))=0$, that is if $d=\deg f >1$. By our convention, this happens when $d> t_1(R(F,H))= t_1( \mathbb{C}[T_0, \dots,T_n])=1. $

$$ \text{Since }
\Aut(\mathbb{P}^N; X) = \{ g \in \operatorname{PGL}(N+1) \mid g(I_X)= I_X \}
\text{ and } R(F,H)/(f)=R(X, H|_X) \simeq \mathbb{C}[T_0, \dots,T_N]/I_X,$$
we obtain
\begin{equation} \label{eq5}
\Aut(\mathbb{P}^N;X)\simeq \dfrac{\Autgr\bigl( \mathbb{C}[T_0, \dots, T_N], R(F,H)/(f)\bigr)}{\mathbb{C}^*}.
\end{equation}

We conclude our result by proving $\dfrac{\Autgr\bigl(R(F,H), R(F,H)/(f)\bigr)}{ \mathbb{C}^*}\simeq \Aut(F;X). $
For that, we define the following map \begin{align*}
\Psi \colon \Autgr\bigl(R(F,H), R(F,H)/(f)\bigr)&\to \Aut(F,H) \\
\varphi &\mapsto g \colon \Proj R(F,H) \to \Proj R(F,H)
\end{align*} 

Since $ \Autgr\bigl(R(F,H), R(F,H)/(f)\bigr) \leq \Autgr\bigl(R(F,H) \bigr) $ and 
$\Psi$ is the restriction of  $\Phi_{(F.H)}$ (see Proposition \ref{prop:gradedaut_autY} for notation) to $\Autgr\bigl(R(F,H), R(F,H)/(f)\bigr)$, by Proposition \ref{prop:gradedaut_autY},
$\ker \Psi = \mathbb{C}^*$.

We show $\im \Psi = \Aut(F;X)$. For proving $\im \Psi \subseteq \Aut(F;X)$,
we notice that any commutative diagram\begin{center}
\begin{equation}
\label{diagram1}
 \begin{tikzcd}
R(F,H) \arrow[rr, "i^*", twoheadrightarrow] \arrow[d, "\tilde{\varphi}"] &  &  R(F,H)/(f)  \arrow[d, "\varphi"] \\
R(F,H) \arrow[rr, "i^*", twoheadrightarrow] &  & R(F,H)/(f)                      
\end{tikzcd}
\end{equation}
\end{center} 
yields the following one\begin{center}
\begin{equation}
\label{diagram2}    
\begin{tikzcd}
\text{Proj } R(F,H)/(f)= X \arrow[rr, "i", hook] \arrow[d, "\Phi_{(X,H_X)}(\varphi)"] &  & F= \text{Proj }   R(F,H)\arrow[d, "\Phi_{(F,H)}(\tilde{\varphi})"] \\
\text{Proj } R(F,H)/(f) = X \arrow[rr, "i", hook]                &  & F = \text{Proj }  R(F,H)            .       
\end{tikzcd}
\end{equation}
\end{center}

We prove now the reverse inclusion. 
Let $g \in \Aut(F;X)$, and $g|_X$ the restriction. Since $H|_X$ is $\Aut(X)$-invariant, by Proposition \ref{prop:gradedaut_autY} there is a unique (up to scalar) $$\varphi \colon R(F,H)/(f) \to R(F, H)/(f) \text{ such that } \Phi_{(X,H|_X)}^{-1}(g|_X)= \varphi.$$ 
By  Proposition \ref{prop:lift2},
there is a unique graded automorphism
$\tilde{\varphi} \colon R(F,H) \to R(F,H)$ making the diagram \eqref{diagram1} commute.

By Proposition \ref{prop:gradedaut_autY}, $ \Phi_{(F,H)}(\tilde{\varphi})$ is the unique map in $\Aut(F,H)$ making the diagram \eqref{diagram2} commute. 
Since $g \in \Aut(F,H)$
and 
$\Phi_{(X,H|_X)}(\varphi)=g|_X$, we conclude $$\Phi_{(F,H)}(\tilde{\varphi})= g.$$
Moreover, $\tilde{\varphi}\in \Autgr\bigl(R(F,H), R(F,H)/(f)\bigr)$, thus $g= \Psi(\tilde{\varphi})$ and
$\Aut(F;X) \subseteq \im \Psi$. Finally,
\begin{equation} \label{eq6}
\frac{\Autgr\bigl(R(F,H), R(F,H)/(f)\bigr)}{ \mathbb{C}^*}\simeq \Aut(F;X).\end{equation}

Combining equations \eqref{eq3}, \eqref{eq4},
\eqref{eq5} and \eqref{eq6}, we obtain the desired result.

\end{proof}

\begin{cor}
    \label{cor:main}

Let $F$ be a smooth simply connected projective variety with  $\dim F  \geq 3$ and $H$ a normally generated line bundle on $F$.  Assume that either $\omega_F\simeq\mathcal{O}_F$ or $F$ is a Fano variety and $\omega_F\simeq H^{\otimes r}$ for some integer $r<0$. Let $X \in |H^{\otimes d}|$ a smooth projective variety, for $d > t_1(R(F,H)) \text{ and } d \neq -r .$ Then, $$\Aut(X)\simeq \Aut(F;X) \simeq\Aut(\mathbb{P}^N;X),$$ 
 where $i_H \colon F \hookrightarrow \mathbb{P}^N$ is the embedding induced by the complete linear system $|H|$.
\end{cor}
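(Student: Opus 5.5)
The plan is to deduce Corollary \ref{cor:main} from Theorem \ref{thm:main} by checking its three hypotheses in turn. The standing assumption $d > t_1(R(F,H))$ is given directly. Since $H$ is normally generated, it is very ample by Remark \ref{ngisvample}, hence ample and base-point-free. We take $f \in R(F,H)_d$ cutting out $X$. Because $F$ is integral, $R(F,H)$ is a domain, so $f$ is a nonzero divisor.

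\textbf{Hypothesis (1).} A smooth variety is Gorenstein and normal, and it has rational singularities. So Proposition \ref{h1vanish} applies as soon as its alternative (1) or (2) holds.
\begin{enumerate}
\item In the Fano case, $\omega_F \simeq H^{\otimes r}$ with $r<0$, which is alternative (2) verbatim.
\item In the case $\omega_F \simeq \OO_F$, we need $H^1(F,\OO_F)=0$. Since $F$ is simply connected, $H^1(F,\mathbb{Z})=0$. Hodge theory then gives $H^1(F,\OO_F)=0$, because $h^{0,1}=\tfrac12 b_1=0$.
\end{enumerate}
Proposition \ref{h1vanish} therefore gives $H^1(F,H^{\otimes(m-d)})=0$ for all $m\ge 0$.

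\textbf{Hypothesis (2).} In both cases $\omega_F\simeq H^{\otimes r}$, with $r=0$ in the Calabi--Yau case. Proposition \ref{prop:Hautinvariant} applies since $F$ is smooth and simply connected, $\dim F\ge 3$, $X$ is a smooth member of $|H^{\otimes d}|$, $d>0$ and $d\neq -r$. It yields that $H|_X$ is $\Aut(X)$-invariant.

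In the case $r=0$, the condition $d\neq -r$ just says $d\neq 0$. Here the adjunction exponent $r+d=d$ is nonzero, so Proposition \ref{prop:torsionfree} still applies: $\omega_X=H|_X^{\otimes d}$ is a nonzero power of an ample bundle.

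\textbf{Hypothesis (3).} The smooth case of Lemma \ref{lem:autFXFH} applies, since $F$ is smooth, $H$ is ample and base-point-free, and $X$ is a smooth member of $|H^{\otimes d}|$ with $H|_X$ being $\Aut(X)$-invariant. It gives $\Aut(F;X)\subseteq\Aut(F,H)$.

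All hypotheses of Theorem \ref{thm:main} now hold, and it gives $\Aut(X)\simeq\Aut(F;X)\simeq\Aut(\mathbb{P}^N;X)$.

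\textbf{Main obstacle.} The only nonformal point is the vanishing $H^1(F,\OO_F)=0$ in the trivial canonical case. The route above uses simple connectivity through Hodge symmetry on the smooth projective variety $F$. A secondary point is that Proposition \ref{prop:Hautinvariant} needs $X$ connected; its proof supplies this via Kodaira vanishing, which holds in both cases.
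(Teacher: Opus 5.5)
Your proposal is correct and follows the paper's proof: you verify the three hypotheses of Theorem~\ref{thm:main} using Proposition~\ref{h1vanish}, Proposition~\ref{prop:Hautinvariant} and Lemma~\ref{lem:autFXFH}, exactly as the paper does. Your Hodge-theoretic derivation of $H^1(F,\mathcal{O}_F)=0$ from simple connectivity, via $b_1=0$ and $h^{0,1}=\tfrac12 b_1$, fills in a step the paper leaves implicit when it invokes alternative (1) of Proposition~\ref{h1vanish} in the case $\omega_F\simeq\mathcal{O}_F$.
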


\begin{proof}
By Proposition \ref{h1vanish}, $H^1(F,H^{\otimes(m-d)})=0$ for all $m\geq0$, by Proposition \ref{prop:Hautinvariant}, $H|_X$ is $\Aut(X)$-invariant and $\Aut(F;X)\subseteq \Aut(F,H)$ by Lemma \ref{lem:autFXFH}, then Theorem \ref{thm:main} applies.

\end{proof}
\begin{rmk}\label{rmk:pic1}
    Suppose that $F$ is a Fano variety with Picard number $1$ and
    $\dim F \geq 4$. As a consequence of Theorem
    \ref{thm:GeneralizedLefschetz}, $X$ also has Picard number $1$.
    Since every $g\in\Aut(X)$ induces an automorphism of
    $\Pic(X)\simeq\mathbb{Z}$ and preserves the ample cone, we have $g^*H \simeq H.$ 
    Hence, $H|_X$ is $\Aut(X)$-invariant. Therefore, in this case, the
    condition $d\neq -r$ can be omitted.
\end{rmk}

\begin{cor} \label{cor:Sing3}

Let $F$ be a normal, $\mathbb{Q}$-factorial Gorenstein projective threefold with rational singularities and  $\operatorname{Cl}(F)$ is torsion-free. Let $H$ be a normally generated line bundle on $F$. Assume that either
\begin{enumerate}
\item$\omega_F\simeq\mathcal{O}_F$ and $H^1(F, \OO_F)=0$; or
\item  $F$ is a Fano and $\omega_F\simeq H^{\otimes r}$ for some integer $r<0$.
    
\end{enumerate}

Moreover, let $X\in |H^{\otimes d}|$ be a very general projective variety, where
$d>t_1(R(F,H))$ and $d+r>0$. Then, $$\Aut(X)\simeq \Aut(F;X) \simeq\Aut(\mathbb{P}^N;X),$$ 
 where $i_H \colon F \hookrightarrow \mathbb{P}^N$ is the embedding induced by the complete linear system $|H|$.
\end{cor}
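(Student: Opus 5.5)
The plan is to verify the three hypotheses of Theorem \ref{thm:main} and then apply it directly, in the same way as the proof of Corollary \ref{cor:main}. The singular threefold results of Section \ref{5} replace the smooth Lefschetz arguments.

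\textbf{Hypothesis (1).} I would invoke Proposition \ref{h1vanish}. Here $F$ is normal and Gorenstein with rational singularities, and $\dim F=3\geq 2$. In case (1) we have $\omega_F\simeq\mathcal{O}_F$ and $H^1(F,\mathcal{O}_F)=0$, and in case (2) $F$ is Fano with $\omega_F\simeq H^{\otimes r}$, $r<0$; these are exactly the two alternatives of that proposition. It gives $H^1(F,H^{\otimes(m-d)})=0$ for all $m\geq 0$ and every $d>0$. Note that $d>t_1(R(F,H))\geq 1$, so $d>0$. In case (1) we set $r=0$, so $\omega_F\simeq H^{\otimes 0}$ and the condition $d+r>0$ is automatic.

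\textbf{Hypothesis (2).} $H$ is normally generated, hence very ample by Remark \ref{ngisvample}. Together with the standing assumptions ($\mathbb{Q}$-factorial, Gorenstein, rational singularities, $\operatorname{Cl}(F)$ torsion-free, $\omega_F\simeq H^{\otimes r}$, $d>0$, $d+r>0$), Proposition \ref{prop:singular3fold} shows that $H|_X$ is $\Aut(X)$-invariant for very general $X\in|H^{\otimes d}|$.

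\textbf{Hypothesis (3).} This is the main obstacle, because Lemma \ref{lem:autFXFH} as stated uses Theorem \ref{thm:GeneralizedLefschetz}. That theorem gives injectivity of $\operatorname{Cl}(F)\to\operatorname{Cl}(X)$ for general $X\in|L|$ with $L$ ample and base-point-free; in dimension $3$ it gives injectivity only, but injectivity is all we need. I would apply it with $L=H^{\otimes d}$, which is ample and base-point-free since $H$ is very ample. Alternatively, Theorem \ref{AAU} applied to the very ample bundle $H^{\otimes d}$ gives an isomorphism $\operatorname{Cl}(F)\simeq\operatorname{Cl}(X)$ for very general $X$. For this I need $\omega_F\otimes H^{\otimes d}=H^{\otimes(r+d)}$ to be globally generated, which holds because $r+d>0$ and $H$ is very ample.

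In either version the argument of Lemma \ref{lem:autFXFH} goes through unchanged. For $g\in\Aut(F;X)$, invariance of $H|_X$ gives $i^*g^*H\simeq i^*H$. Injectivity of restriction on class groups then forces $g^*H\simeq H$; since $F$ is normal, Cartier classes embed in $\operatorname{Cl}(F)$, so this is an isomorphism of line bundles. Hence $\Aut(F;X)\subseteq\Aut(F,H)$.

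Finally, a very general $X$ lies in the intersection of the countably many open-dense conditions used above. For such $X$, Theorem \ref{thm:main} applies and yields $\Aut(X)\simeq\Aut(F;X)\simeq\Aut(\mathbb{P}^N;X)$.
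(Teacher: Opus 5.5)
Your proposal is correct and is essentially the paper's proof: it gets hypothesis (1) of Theorem \ref{thm:main} from Proposition \ref{h1vanish}, hypothesis (2) from Proposition \ref{prop:singular3fold}, and hypothesis (3) from Lemma \ref{lem:autFXFH}. Hypothesis (3) is not really an obstacle, since Lemma \ref{lem:autFXFH} applies verbatim ($F$ is normal of dimension $3$, $H$ is very ample, and a very general $X$ is in particular general), so your argument for it simply unpacks that lemma's proof.
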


\begin{proof} As a consequence of Proposition \ref{h1vanish},  by Proposition \ref{prop:singular3fold} and   Lemma \ref{lem:autFXFH}, Theorem \ref{thm:main} applies.
 
\end{proof}

\begin{cor}
Let $F$ be a normal Gorenstein projective variety with $\operatorname{Cl}(F)$ is torsion-free and $\dim F \geq 4$. Assume that either $F$ has only isolated singularities or $F$ has rational singularities.
Let $H$ be a normally generated line bundle on $F$. Assume that either
\begin{enumerate}
\item$\omega_F\simeq\mathcal{O}_F$ and $H^1(F, \OO_F)=0$; or
\item  $F$ is a Fano and $\omega_F\simeq H^{\otimes r}$ for some integer $r<0$.
    
\end{enumerate}

Moreover, let $X\in |H^{\otimes d}|$ be a general projective variety, where
$d>t_1(R(F,H))$ and $d\neq -r$. Then, $$\Aut(X)\simeq \Aut(F;X) \simeq\Aut(\mathbb{P}^N;X)$$

where $i_H \colon F \hookrightarrow \mathbb{P}^N$ is the embedding induced by the complete linear system $|H|$.
\end{cor}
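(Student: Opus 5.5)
The plan is to reduce the statement to Theorem \ref{thm:main} by checking its three hypotheses, as in the proofs of Corollaries \ref{cor:main} and \ref{cor:Sing3}. The only new feature is the higher-dimensional singular setting, where we replace the smooth Lefschetz arguments by Proposition \ref{prop:singhigherdim} and Theorem \ref{thm:GeneralizedLefschetz}.

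First, hypothesis (1) of Theorem \ref{thm:main} follows from Proposition \ref{h1vanish}. Its assumptions match ours directly: $F$ is normal and Gorenstein with $\dim F\geq 4\geq 2$, it has isolated or rational singularities, and either $\omega_F\simeq\OO_F$ with $H^1(F,\OO_F)=0$ or $F$ is Fano with $\omega_F\simeq H^{\otimes r}$, $r<0$. So we get $H^1(F,H^{\otimes(m-d)})=0$ for all $m\geq 0$ and every $d>0$. In the Calabi--Yau case we set $r=0$, so the condition $d\neq -r$ just says $d\neq 0$, which holds automatically.

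Second, for hypothesis (2) we apply Proposition \ref{prop:singhigherdim}. Here $H$ is normally generated, hence very ample by Remark \ref{ngisvample}, and in particular ample and base-point-free. Together with $\operatorname{Cl}(F)$ torsion-free, $\dim F\geq 4$, $\omega_F\simeq H^{\otimes r}$, $d>0$ and $d\neq -r$, this shows that $H|_X$ is $\Aut(X)$-invariant for a general $X\in|H^{\otimes d}|$. One point needs care. Theorem \ref{thm:GeneralizedLefschetz} is stated for a general member of $|H|$, while we need it for $|H^{\otimes d}|$. We handle this by applying it to the ample, base-point-free bundle $H^{\otimes d}$, which gives $\operatorname{Cl}(F)\simeq\operatorname{Cl}(X)$. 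For Proposition \ref{prop:torsionfree} we also need $\Pic(X)$ torsion-free. We plan to get this from the isomorphism of class groups and the injectivity $\Pic(X)\hookrightarrow\operatorname{Cl}(X)$, which holds because $X$ is normal. Normality of a general $X$ follows from Bertini, since $F$ is normal and $H^{\otimes d}$ is base-point-free.

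Third, hypothesis (3), $\Aut(F;X)\subseteq\Aut(F,H)$, is exactly Lemma \ref{lem:autFXFH}. This applies because $F$ is normal with $\dim F\geq 3$, $H$ is ample and base-point-free, $X$ is general, and $H|_X$ is $\Aut(X)$-invariant by the previous step. With all three hypotheses verified and $d>t_1(R(F,H))$, Theorem \ref{thm:main} gives $\Aut(X)\simeq\Aut(F;X)\simeq\Aut(\mathbb{P}^N;X)$.

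The main obstacle is the second step. We need to make sure the generality conditions coming from Theorem \ref{thm:GeneralizedLefschetz} (used for $H^{\otimes d}$), from Bertini, and from Lemma \ref{lem:autFXFH} can be imposed simultaneously. Since each is a dense open condition, a general $X$ satisfies all of them at once. We also need that $X$ is Cartier in a Cohen--Macaulay $F$, so that Theorem \ref{Gorenstein} gives $\omega_X\simeq H|_X^{\otimes(r+d)}$ with $r+d\neq 0$.
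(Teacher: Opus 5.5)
Your proposal is correct and follows the paper's proof: it verifies the three hypotheses of the main theorem using Proposition~\ref{h1vanish}, Proposition~\ref{prop:singhigherdim} and Lemma~\ref{lem:autFXFH}. Your extra care fills in details the paper leaves implicit: you apply Theorem~\ref{thm:GeneralizedLefschetz} to $H^{\otimes d}$ rather than $H$, and you deduce that $\Pic(X)$ is torsion-free from $\Pic(X)\hookrightarrow\operatorname{Cl}(X)$ for a general (hence normal) $X$.
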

\begin{proof} As a consequence of Proposition \ref{h1vanish},  by Proposition \ref{prop:singhigherdim} and   Lemma \ref{lem:autFXFH}, Theorem \ref{thm:main} applies.
 
\end{proof}

\section{Applications}
\label{7}

Our main results in Section \ref{sec.Extaut} rely on the invariant 
$t_1(R(F,H))$. In this section, we collect cases for which this invariant was already known and establish upper bounds to it on other cases.

\subsection{Normally Presented Line Bundles}

We recall that a normally presented Line bundle $H$ on $F$ induces a projetively normal embedding $\phi_{H}\colon F \hookrightarrow \mathbb{P}^N$
($H$ is normally generated) such that $I_F$ is quadratic generated.

\begin{thm} \label{thm:NP}
Let $F$ be a smooth simply connected projective variety with  $\dim F  \geq 3$ and $H$ a normally presented line bundle on $F$.  Assume that either $\omega_F\simeq\mathcal{O}_F$ or $F$ is a Fano variety and $\omega_F\simeq H^{\otimes r}$ for some integer $r<0$. Let $X \in |H^{\otimes d}|$ a smooth element, for $d > 2 \text{ and } d \neq -r $. Then, $$\Aut(X)\simeq \Aut(F;X) \simeq\Aut(\mathbb{P}^N;X),$$ 
 where $i_H \colon F \hookrightarrow \mathbb{P}^N$ is the embedding induced by the complete linear system $|H|$.
\end{thm}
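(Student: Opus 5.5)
Theorem \ref{thm:NP} follows from Corollary \ref{cor:main} once we know that $t_1(R(F,H))\leq 2$. So the plan is to verify this bound and then invoke the corollary.

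The first step is to relate $H$ to the hypotheses of Corollary \ref{cor:main}. By definition, a normally presented line bundle is in particular normally generated. Hence $H$ satisfies the hypotheses of Corollary \ref{cor:main}, and by Remark \ref{ngisvample} it induces a projectively normal embedding $i_H\colon F\hookrightarrow\mathbb{P}^N$. Projective normality gives
\[
R(F,H)\simeq S(F)=\mathbb{C}[T_0,\dots,T_N]/I_*, \qquad I_*=\bigoplus_{m\geq 0}H^0(\mathbb{P}^N,I_F(m)).
\]
Thus $t_1(R(F,H))$ is the maximal degree of a minimal homogeneous generator of $I_*$.

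The second step is to bound that degree. The normal presentation condition says the multiplication map
\[
H^0(\mathbb{P}^N,\OO(m-2))\otimes H^0(\mathbb{P}^N,I_F(2))\to H^0(\mathbb{P}^N,I_F(m))
\]
is surjective for all $m\geq 2$. So every homogeneous element of $I_*$ of degree $\geq 2$ lies in the ideal generated by $(I_*)_2$. Two low degrees remain to be checked.
\begin{enumerate}
\item In degree $0$, $(I_*)_0=0$, since $F$ is nonempty.
\item In degree $1$, $(I_*)_1=0$. Indeed, $H^0(\mathbb{P}^N,\OO(1))\to H^0(F,H)$ is an isomorphism because $\mathbb{P}^N=\mathbb{P}(H^0(F,H))$, so $F$ is nondegenerate, exactly as in the proof of Theorem \ref{thm:main}.
\end{enumerate}
Therefore $I_*$ is generated by quadrics and $\beta_{1,j}(R(F,H))=0$ for $j\neq 2$. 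This gives $t_1(R(F,H))\leq 2$.

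There is a degenerate case: $I_*=0$, i.e. $F=\mathbb{P}^N$. Then the convention $t_1=1\leq 2$ applies. The hypotheses are consistent here: for $F=\mathbb{P}^N$ with $H=\OO(1)$ we have $r=-(N+1)$, and the excluded value $d=-r$ recovers the classical exception.

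The last step is to conclude. Since $d>2\geq t_1(R(F,H))$ and $d\neq -r$, all hypotheses of Corollary \ref{cor:main} are satisfied: $F$ is smooth and simply connected with $\dim F\geq 3$, it is Calabi--Yau or Fano with $\omega_F\simeq H^{\otimes r}$, $H$ is normally generated, and $X$ is smooth. The corollary then yields $\Aut(X)\simeq\Aut(F;X)\simeq\Aut(\mathbb{P}^N;X)$.

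The only point needing care is the degree-$1$ part of $I_*$, which is handled by nondegeneracy. The rest is bookkeeping with the definitions.
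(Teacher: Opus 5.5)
Your proof is correct and follows the paper's own argument, which likewise notes that normal presentation gives normal generation and quadric generation of the ideal, so that $t_1(R(F,H))\le 2<d$, and then applies Corollary \ref{cor:main}. Your extra care with the degree-$1$ part and with the case $F=\mathbb{P}^N$ is fine: there the convention gives $t_1=1$, so your ``$\leq 2$'' is slightly more accurate than the paper's ``$=2$''. One side remark is off: for $N\geq 4$ the excluded value $d=N+1$ is broader than the classical Matsumura--Monsky exception, not a recovery of it.
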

\begin{proof}
Since $H$ is normally presented, $H$ is normally generated and $t_1(R(F,H))=2$, thus the theorem is a direct consequence of Corollary \ref{cor:main}.
\end{proof}

We give some known examples of normally presented line bundles.

\begin{table}[ht]
\centering
\hspace*{-1.5cm}
\begin{tabular}{c|c|c|c} \label{N1table}
&\text{Variety }  & \text{Normally Presented Line bundle }  & \text{Reference} \\
\hline
1&$\mathbb{P}^{n_1} \times \mathbb{P}^{n_2} \times \dots
\times \mathbb{P}^{n_k}$
&
$\mathcal{O}_{\mathbb{P}}(1,\dots,1)$
&
\text{\cite[Corollary 1.5]{toric}}
\\[6pt]

2&$\mathbb{P}^{n_1} \times \mathbb{P}^{n_2} \times \dots
\times \mathbb{P}^{n_k}$
&
$\mathcal{O}_{\mathbb{P}}(a_1,\dots,a_k)$, $\min\{a_i \mid a_i \neq 0\} \geq 1$
&
\text{\cite[Corollary 1.5]{toric}}
\\[6pt]

3&$\text{Veronese variety }
\nu_d(\mathbb{P}^n)
\subset
\mathbb{P}\left(
H^0\left(\mathbb{P}^n,
\mathcal{O}_{\mathbb{P}^n}(d)\right)^\vee
\right)$

&$\mathcal{O}_{\mathbb{P}^n}(d)$
&
$\text{Classical}$
\\[6pt]

4&$\text{$n$-dimensional toric variety}$
&
$B^{\otimes m},
B\text{ ample and }m\geq n$
&
\text{\cite[Corollary 1.2]{toric}}
\\[6pt]

5&$\text{Quadric hypersurface in }\mathbb{P}^{n+1}$
&
$\mathcal{O}_{X}(1)$
&
$\text{Classical}$
\\[6pt]
6&\text{Grassmannian}
$\operatorname{Gr}(k,n)$
&
$\mathcal{O}_{\operatorname{Gr}(k,n)}(1)$
&
$\text{Classical, due to Hodge}$ 
\\[6pt]

7&$\text{Lagrangian Grassmannian}\operatorname{LG}(n,2n)$
&
$\mathcal{O}_{\operatorname{LG}(n,2n)}(1)$
&\cite[Theorem 3.11]{SchubertN1}

\\[6pt]

8&\text{Spinor Varieties }$S_n$
& $\mathcal{O}_{S_n}(1)$
&\cite[Theorem 3.11]{SchubertN1}

\\[6pt]

9&\text{Rational Homogeneous Varieties }$G/P$
& $L$ ample
&\cite[Theorem 3.11]{SchubertN1}

\end{tabular}
 
\end{table}

%\begin{rmk}

%We also mention a classical result of Ein and Lazarsfeld \cite[Theorem 1]{EinLazarsfeld} which states that if $F$ is a smooth complex projective variety of dimension $n$, and  $B$ is a very ample line bundle on $F$,then
%$$ K_F \otimes B^{\otimes m} \text{ is normally presented if } m \geq n+2.$$ Recently,
%Purnaprajna and Lacini \cite[Theorem 1.2]{adjointImproved} improved this result by replacing the very ample hypothesis on 
%$B$ with the weaker assumption that $B$ is ample and base point free. 
%\end{rmk}

\begin{rmk} Let $G$ be a simply connected semisimple algebraic group over an algebraically
closed. Let $B$ be a Borel subgroup and let $P \supset B$ a parabolic subgroup. A (generalized) Schubert variety in $G/P$ is the closure of a $B$-orbit in $G/P$.  We remark that the result \cite[Theorem 3.11]{SchubertN1} states that an ample line bundle $L$ on a Schubert variety, is normally presented. We notice that $G/P$ occurs as the maximal Schubert variety in  $G/P$ (see \cite[Example 2, Section 1.2.3]{Brion2005} ), and therefore the examples 6, 7, 8 and 9 in the table above are consequences of \cite[Theorem 3.11]{SchubertN1}.
    
\end{rmk}

\begin{thm} \label{rational}

Let $F$ be a rational homogeneous variety $G/P$ with $\Pic(G/P)= \mathbb{Z} \cdot H$ and $\dim F \geq 3$. For $d>2$, let $X \in |H^{\otimes d}|$ a smooth projective variety. If $\dim F=3$, assume also that $-\omega_F \not\simeq H^{\otimes d}$. Then, $$\Aut(X)\simeq \Aut(F;X) \simeq\Aut(\mathbb{P}^N;X),$$ 
 where $i \colon F \hookrightarrow \mathbb{P}^N$ is the embedding induced by the complete linear system $|H|$.  
\end{thm}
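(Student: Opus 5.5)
We plan to deduce the theorem from Theorem \ref{thm:NP}, or more precisely from its underlying Corollary \ref{cor:main}, together with Remark \ref{rmk:pic1} to handle the condition $d\neq -r$ in dimension at least $4$.

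\textbf{Checking the standing hypotheses.} First we verify the hypotheses on $(F,H)$.
\begin{enumerate}
\item A rational homogeneous variety $G/P$ is smooth, projective and simply connected; the last point holds since it is rational and smooth, or by the Bruhat decomposition.
\item It is Fano. Since $\Pic(F)=\mathbb{Z}\cdot H$ with $H$ ample, we can write $\omega_F\simeq H^{\otimes r}$ with $r<0$.
\item By entry 9 of the table, via \cite[Theorem 3.11]{SchubertN1} applied to $G/P$ as the maximal Schubert variety, $H$ is normally presented. Hence $H$ is normally generated and $t_1(R(F,H))=2$. The degenerate case $F=\mathbb{P}^N$, with $t_1=1$, is harmless, since $d>2>1$.
\end{enumerate}

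\textbf{Case $\dim F=3$.} Here the hypothesis $-\omega_F\not\simeq H^{\otimes d}$ means exactly $d\neq -r$. All hypotheses of Theorem \ref{thm:NP} then hold, and the conclusion follows directly.

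\textbf{Case $\dim F\geq 4$.} Now $d=-r$ is allowed, so Proposition \ref{prop:Hautinvariant} no longer gives the $\Aut(X)$-invariance of $H|_X$. Instead we go back to Theorem \ref{thm:main} and check its three conditions separately.
\begin{enumerate}
\item The vanishing $H^1(F,H^{\otimes(m-d)})=0$ follows from Proposition \ref{h1vanish}(2).
\item Condition (3) follows from Lemma \ref{lem:autFXFH} in its smooth form, using the classical Lefschetz injectivity $\Pic(F)\hookrightarrow\Pic(X)$, once condition (2) is known.
\item For condition (2), we use the Lefschetz hyperplane theorem for smooth ample $X$ with $\dim X\geq 3$. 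It gives $\Pic(X)\simeq\Pic(F)\simeq\mathbb{Z}$, generated by $H|_X$. Since $X$ is smooth and $\dim X\geq 3$, the relevant groups are $H^1(X,\mathcal{O}_X)=0$ and $H^2(X,\mathbb{Z})\simeq H^2(F,\mathbb{Z})$, and $\Pic(X)\simeq H^2(X,\mathbb{Z})$. Any $g\in\Aut(X)$ acts on $\Pic(X)\simeq\mathbb{Z}$ by $\pm1$ and preserves ampleness, so $g^*H|_X\simeq H|_X$. This is exactly the argument of Remark \ref{rmk:pic1}, carried out here with the smooth Lefschetz theorem instead of Theorem \ref{thm:GeneralizedLefschetz}, so that it applies to every smooth $X$ and not only a general one.
\end{enumerate}
With all three conditions in place, Theorem \ref{thm:main} gives the result.

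\textbf{Main obstacle.} The delicate point is the identification $\Pic(X)\simeq\mathbb{Z}\cdot H|_X$ for an arbitrary smooth $X$, rather than a general one. We will justify it via Lefschetz for $H^2$ together with the vanishing of $H^1(X,\mathcal{O}_X)$ and $H^2(X,\mathcal{O}_X)$. These vanishings come from the exact sequences of $\mathcal{O}_F(-dH)\to\mathcal{O}_F\to\mathcal{O}_X$ and from Kodaira vanishing on $F$, where $H^i(F,\mathcal{O}_F)=0$ for $i>0$. The remaining verifications are routine.
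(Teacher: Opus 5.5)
Your proposal is correct and follows essentially the paper's route: Theorem~\ref{thm:NP} (via Corollary~\ref{cor:main}), with normal presentation of $H$ from \cite[Theorem 3.11]{SchubertN1}, plus the Picard-number-one argument of Remark~\ref{rmk:pic1} to drop the condition $d\neq -r$ when $\dim F\geq 4$. Your use of the classical Lefschetz theorem for smooth ample divisors, in place of Theorem~\ref{thm:GeneralizedLefschetz}, is a good tightening: it makes the $\Aut(X)$-invariance of $H|_X$ hold for every smooth $X$, which is what the statement asserts, and not only for a general member.
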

\begin{proof}
    Consequence of Theorem \ref{thm:NP}, Remark \ref{rmk:pic1} and \cite[Theorem 3.11]{SchubertN1}.
\end{proof}

\begin{rmk}
We remark that our result for rational homogeneous varieties complements the
result of Benedetti and Manivel \cite{BenedettiManivel}. While their
result concerns the extension of automorphisms of hyperplane sections
of homogeneous varieties, our result treats hypersurfaces of arbitrary
degree $d\geq 3$.
\end{rmk}

\subsection{Anticanonical embeddings of Fano threefolds}
For a Fano threefold $F$, when $- \omega_F$ is very ample, as a consequence of Noether-Enriques-Petri on canonical curves, we always have that $\omega_F^{-1}$ is normally generated and $t_1(R(F,\omega_F^{-1})) \leq 3$ if $F$ is not quartic in $\mathbb{P}^4$.
We state here the version of theorem stated in \cite{trigonal}, for a more detailed proof, we refer to \cite[Proposition 3.8]{anticanonical}

\begin{thm} \cite[Theorem 2.14]{trigonal}, based on \cite{anticanonical}. \label{Fano}
Let $F\subset \mathbb{P}^n$
be an anticanonically embedded Fano threefold with canonical singularities,
that is,$$\omega_F^{-1}\simeq \mathcal{O}_{\mathbb{P}^n}(1)|_F
\qquad\text{and}\qquad
n=-\frac{1}{2}K_F^{3}+2.
$$
Then the following assertions hold.

\begin{enumerate}
    \item[(1)] $\omega_F^{-1}$ is a normally generated line bundle.

    \item[(2)] If $-K_F^3=4$
    then \(F\) is a quartic threefold in \(\mathbb{P}^4\).

    \item[(3)] If $-K_F^3\geq 6$, then the graded ideal $I_F$  is generated
    by its components of degrees \(2\) and \(3\).

    \item[(4)] If$-K_F^3\geq 6$
    then the graded ideal $I_F$ of is generated
    by its quadratic component, except when
    for a general linear subspace $\Pi\subset\mathbb{P}^n$ of codimension $2$, the curve $F\cap\Pi$ is either
    \begin{enumerate}
        \item [(a)] 
    a canonically embedded smooth trigonal curve; or
    \item [(b)] a
    canonically embedded smooth plane quintic, and
    $\deg(F\subset\mathbb{P}^n)=10.$

    \end{enumerate}

\end{enumerate}
\end{thm}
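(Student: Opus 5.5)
The plan is to reduce everything to the general curve section $C=F\cap\Pi$, where $\Pi\subset\mathbb{P}^n$ is a general linear subspace of codimension $2$, and to use the classical Noether--Enriques--Petri theorem for canonical curves. Two standard results supply the needed general sections. First, by Reid/Shokurov, a general $S\in|-K_F|$ is a K3 surface with at worst Du Val singularities, so $\omega_S\simeq\OO_S$ and $H^1(S,\OO_S)=0$. Second, since $F$ has canonical singularities, Bertini gives that a general $C\in|\OO_S(1)|$ is a smooth curve, and adjunction gives $\omega_C\simeq\OO_C(1)$. Hence $C\subset\Pi\simeq\mathbb{P}^{n-2}$ is a canonically embedded curve of genus $g$ with $2g-2=\deg C=-K_F^3$. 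This is consistent with $n-2=g-1$ and with the formula $n=-\tfrac12K_F^3+2$.

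The key steps, in order, are as follows.
\begin{enumerate}
\item Apply Noether's theorem to $C$: it is projectively normal.
\item Apply Petri's theorem to $C$: for $g\ge4$, equivalently $-K_F^3\ge6$, the ideal $I_C$ is generated in degrees $2$ and $3$, and it is generated by quadrics unless $C$ is trigonal or a smooth plane quintic (the latter forcing $g=6$, i.e.\ degree $10$).
\item If $g=3$, note that $C$ is a plane quartic. Then $n=4$, $\deg F=-K_F^3=4$, and $F$ is a quartic threefold in $\mathbb{P}^4$, which is assertion (2).
\item Lift the properties of $C$ successively from $C$ to $S$ and from $S$ to $F$ by the hyperplane section principle.
\end{enumerate}

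For the lifting in step 4, let $Y\subset\mathbb{P}^r$ be $F$ or $S$, and let $Y_h=Y\cap\{h=0\}$ be its general hyperplane section. The sequence
\[
0\to \mathcal{I}_Y(m-1)\xrightarrow{\cdot h}\mathcal{I}_Y(m)\to \mathcal{I}_{Y_h}(m)\to0
\]
shows that $H^1(\mathcal{I}_Y(m-1))=0$ together with $H^1(\mathcal{I}_{Y_h}(m))=0$ gives $H^1(\mathcal{I}_Y(m))=0$. Arguing by induction on $m$, starting from $H^1(\mathcal{I}_Y(0))=0$ (which holds because $Y$ is integral), and using Remark \ref{veryampleN0}, projective normality passes from $Y_h$ to $Y$. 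Here we also use $H^1(Y,\OO_Y(m))=0$ for all $m$, which holds by Kawamata--Viehweg vanishing on $F$ and by the K3 property on $S$; this guarantees that the hyperplane sequences for $\OO_Y$ are exact on global sections. This yields (1).

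The same vanishing $H^1(\mathcal{I}_Y(m-1))=0$ makes $I_Y\to I_{Y_h}$ surjective in every degree, modulo $h$. Since $h$ is a nonzerodivisor on $S(Y)$, which is a domain, we obtain $I_Y/hI_Y\simeq I_{Y_h}$ as graded modules over $\CC[T_0,\dots,T_r]/(h)$. By the graded Nakayama lemma, minimal generators of $I_{Y_h}$ lift to minimal generators of $I_Y$ in the same degrees, so $t_1(R(Y,\OO_Y(1)))=t_1(R(Y_h,\OO_{Y_h}(1)))$. Applying this twice transports Petri's conclusions from $C$ to $F$, giving (3) and (4).

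I expect the main obstacle to be the geometric input rather than the algebra. Specifically, one must ensure that the general elephant $S$ has only Du Val singularities and that the general curve section $C$ is smooth and irreducible even though $F$ is singular. One must also check that the vanishing $H^1(F,\OO_F(m))=0$ holds for every $m\in\mathbb{Z}$, including $m\le0$, which requires Serre duality on the Cohen--Macaulay variety $F$ together with $H^1(\OO_F)=0$. Once these are in place, the lifting argument is formal. I would first try to cite Reid's general elephant theorem for canonical Gorenstein Fano threefolds, and then use Bertini's theorem on the K3 surface $S$, whose singularities are isolated, to obtain the smoothness of $C$.
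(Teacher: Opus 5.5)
Your proposal is correct and is essentially the argument the paper relies on. The paper gives no proof of its own: it quotes the result from \cite[Theorem 2.14]{trigonal} and \cite[Proposition 3.8]{anticanonical}, noting only that it follows from Noether--Enriques--Petri for canonical curves, and that is exactly your reduction via a Du Val K3 elephant $S$, the canonical curve section $C=F\cap\Pi$, and the hyperplane-section lifting of projective normality and of minimal generator degrees (a minor remark: your induction on $H^1(\mathcal{I}_Y(m))$ needs only $H^1(\mathcal{I}_Y)=0$ and the projective normality of $Y_h$, so the vanishing $H^1(Y,\OO_Y(m))=0$ you invoke is harmless but not actually required there).
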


\begin{df}
A Fano threefold $F$ is called trigonal, if the linear system $|-K_F|$ is very ample, but the anticanonical image $F \hookrightarrow \mathbb{P}^N$ is not a an intersection of quadrics.  
\end{df}

  There are 69 families of trigonal Fano threefolds with canonical Gorenstein singularities and they were completely classified by Przyjalkowski, Cheltsov and Shramov in \cite{trigonal}. We display here only the smooth classification. The reference for the following table is Fanography \cite{Fanography} and \cite[Chapter 6,Table 3]{Prokhorov2025}.

\begin{table}[ht]
    \centering
    \begin{tabular}{c|c|c|l}
        \hline
     \text{Family} & $\rho(F)$ & $(-K_F)^3 $& \text{Description} \\
        \hline
         1.2& 1 & 4 &
        $X_4\subset \mathbb{P}^4$ \\[2pt]
        1.3 & 1 & 6 &
        $X_{2,3}\subset \mathbb{P}^5$ \\[2pt]
       
        2.4 & 2 & 10 &
        $\operatorname{Bl}_{C}\mathbb{P}^3,\quad
        C=(3,3)\subset\mathbb{P}^3$ \\[2pt]
        2.5 & 2 & 12 &
       $ \operatorname{Bl}_{C}X_3,\quad

        C\subset X_3\text{ a plane cubic}$ \\[2pt]
3.2 & 3& 14 & \text{the proper transform of a divisor in Q of
type (3, 2)} \\[2pt]

        8.1 & 8 & 18 &
       $ \operatorname{Bl}_6 \mathbb{P}^2 \times\mathbb{P}^1$\\[2pt]
        \hline
    \end{tabular}
    \caption{Smooth trigonal Fano threefolds }
    \label{tab:trigonal-fano}
\end{table}

 We recall that $\mathbb{Q}$-factorial Gorenstein Fano threefold $F$ with terminal singularities are factorial, hence 
$\operatorname{Cl}(F)$ is torsion-free, see \cite[Lemma 5.1]{Kaw88}. We therefore obtain the following theorem. 
\begin{thm} \label{Fano}
 Let $F$ be a normal, factorial, Fano threefold with terminal singularities and $\omega_F^{-1}$ is very ample. Let $X\in |\omega_F^{\otimes -d}|$ be a projective variety.
 Assume that \begin{enumerate}
    \item  $d>2$ if $F$ is not trigonal.
 \item $d>4$ if $F$ is a quartic in $\mathbb{P}^4$.
 \item $d>3$ remaining trigonal cases.
 \end{enumerate}
 Then, denoting $i \colon F\hookrightarrow\mathbb P^N$
the anticanonical embedding, we have
\begin{enumerate}
    \item[(i)] if \(F\) is smooth and \(X\) is smooth, then
    \[
    \operatorname{Aut}(X)
    \simeq
    \operatorname{Aut}(F;X)
    \simeq
    \operatorname{Aut}(\mathbb P^N;X);
    \]
    
    \item[(ii)] if \(F\) is singular, then the same conclusion holds
    for a very general \(X\in|\omega_F^{\otimes -d}|\).
\end{enumerate}

\end{thm}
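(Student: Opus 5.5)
We set $H:=\omega_F^{-1}$, so that $\omega_F\simeq H^{\otimes r}$ with $r=-1$, and $X\in|H^{\otimes d}|$. The plan is to reduce the statement to the general results of Section \ref{sec.Extaut}: Corollary \ref{cor:main} in the smooth case and Corollary \ref{cor:Sing3} in the singular case. The input needed is the bound $d>t_1(R(F,H))$, which we read off from the anticanonical Theorem \ref{Fano} quoted above from \cite{trigonal}.

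\textbf{Step 1: normal generation and $t_1$.} By part (1) of the quoted theorem, $H$ is normally generated. Hence $R(F,H)\simeq\mathbb{C}[T_0,\dots,T_N]/I_F$, and $t_1(R(F,H))$ is the maximal degree of a minimal generator of $I_F$. We split into three cases.
\begin{enumerate}
\item If $-K_F^3=4$, then $F$ is a quartic in $\mathbb{P}^4$, so $I_F=(q_4)$ and $t_1=4$.
\item If $-K_F^3\ge 6$ and $F$ is not trigonal, then $I_F$ is generated by quadrics. By definition of trigonality this gives $t_1=2$.
\item In the remaining trigonal cases, part (3) gives $t_1\le 3$.
\end{enumerate}
We also need that $-K_F^3\in\{2,4\}$ with $|-K_F|$ very ample forces the quartic case. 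We would note that $-K_F^3=2$ gives $N=3$, which is impossible for a threefold with very ample $-K_F$. In each case the hypothesis on $d$ then gives exactly $d>t_1(R(F,H))$.

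\textbf{Step 2: the remaining hypotheses.} Since $d>2\ge 1=-r$, we have $d\neq -r$ and $d+r>0$.

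For (i), $F$ is a smooth Fano threefold, hence simply connected. Corollary \ref{cor:main} applies directly with $\dim F=3$ and $r=-1<0$.

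For (ii), $F$ is terminal and Gorenstein, since $K_F$ is Cartier because $-K_F$ is very ample. Terminal singularities are rational, and $F$ is factorial, hence $\mathbb{Q}$-factorial. By \cite[Lemma 5.1]{Kaw88}, $\operatorname{Cl}(F)$ is torsion-free. Corollary \ref{cor:Sing3}(2) then applies to a very general $X$.

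\textbf{Main obstacle.} The main obstacle is bookkeeping rather than a new argument. We must check that the case division in the statement matches exactly the trichotomy of the quoted theorem: not trigonal means quadratically generated, and the quartic is counted as trigonal with $t_1=4$. We also need the factoriality and torsion-freeness input in the singular case. If Corollary \ref{cor:Sing3} requires $H$ very ample in addition to normally generated, this follows from Remark \ref{ngisvample}.
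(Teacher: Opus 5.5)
Your proposal is correct and is essentially the paper's proof. You obtain normal generation of $\omega_F^{-1}$ and the bound $d>t_1(R(F,\omega_F^{-1}))$ from the quoted anticanonical theorem of \cite{trigonal}, then apply Corollary \ref{cor:main} in the smooth case and Corollary \ref{cor:Sing3} in the singular case; your additional bookkeeping (ruling out $-K_F^3=2$, matching the case division with the cases of the quoted theorem, and checking Gorensteinness and torsion-freeness of $\operatorname{Cl}(F)$) only makes explicit what the paper's one-line proof leaves implicit.
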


\begin{proof}
Direct consequence of Theorem \ref{Fano} and Corollary \ref{cor:main} for the smooth case and \ref{cor:Sing3} for the more general case.

\end{proof}

\subsection{Upper bounds via Castelnuovo--Mumford regularity}
In this section, we compute upper bounds to $t_1(R(F,H))$ in the case that $H$ is a normally generated line bundle on variety. We recall that given an ample and globally generated line bundle $H$ on a projective variety, we have that $H^{\otimes m}$ is always normally generated if $m \geq \dim F +1$, as a consequence of recent results of Purnaprajna and Lacini \cite{adjointImproved}.

\begin{prop} \label{reg}

Let $F$ be $n$-dimensional smooth projective variety $F$ with trivial canonical divisor and $\dim F \geq 2$. 
Let $H$ be a normally generated line bundle on $F$, then $\operatorname{reg}(R(F,H))=n+1$.
    
\end{prop}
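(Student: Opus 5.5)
The plan is to reduce the computation to sheaf cohomology on $F$ via Proposition \ref{reg0}, and then compute the regularity of $\mathcal{O}_F$ using Kodaira vanishing and Serre duality.

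\textbf{Reduction.} Since $H$ is normally generated, it is very ample by Remark \ref{ngisvample}. It embeds $F$ projectively normally as $i_H\colon F\hookrightarrow \mathbb{P}^N$, and $S(F)\simeq R(F,H)$. Proposition \ref{reg0} then gives
\[
\operatorname{reg}(R(F,H))=\operatorname{reg}(S(F))=\operatorname{reg}(\mathcal{O}_F),
\]
where $\mathcal{O}_F$ means the sheaf $(i_H)_*\mathcal{O}_F$ on $\mathbb{P}^N$. Because $i_H$ is a closed embedding and $\mathcal{O}_{\mathbb{P}^N}(1)|_F\simeq H$, we have
\[
H^i\bigl(\mathbb{P}^N,\mathcal{O}_F(k)\bigr)\simeq H^i\bigl(F,H^{\otimes k}\bigr)
\]
for all $i$ and $k$. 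So $\mathcal{O}_F$ is $m$-regular exactly when $H^i(F,H^{\otimes(m-i)})=0$ for all $1\le i\le n$; the groups with $i>n$ vanish automatically.

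\textbf{Vanishing pattern.} Since $\omega_F\simeq\mathcal{O}_F$ and $F$ is smooth, Kodaira vanishing gives $H^i(F,H^{\otimes k})=H^i(F,\omega_F\otimes H^{\otimes k})=0$ for $i>0$ and $k>0$. Serre duality (Theorem \ref{duality}) gives $H^n(F,\mathcal{O}_F)\simeq H^0(F,\omega_F)^\vee\simeq\mathbb{C}\neq 0$.

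\textbf{Upper bound.} Take $m=n+1$. For each $1\le i\le n$ the twist is $k=n+1-i\ge 1$, so every group $H^i(F,H^{\otimes(n+1-i)})$ vanishes by Kodaira. Hence $\mathcal{O}_F$ is $(n+1)$-regular and $\operatorname{reg}(\mathcal{O}_F)\le n+1$.

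\textbf{Lower bound.} Suppose $\mathcal{O}_F$ were $m$-regular for some $m\le n$. By Castelnuovo--Mumford's lemma it would then be $n$-regular. But the case $i=n$ of $n$-regularity requires $H^n(F,H^{\otimes 0})=H^n(F,\mathcal{O}_F)=0$, contradicting $H^n(F,\mathcal{O}_F)\simeq\mathbb{C}$. Therefore $\operatorname{reg}(\mathcal{O}_F)=n+1$, and $\operatorname{reg}(R(F,H))=n+1$.

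The only delicate point is the bookkeeping in Proposition \ref{reg0}: one must make sure that the normalization of regularity for the module $S(F)$ matches the sheaf $\mathcal{O}_F$ without an index shift. This is exactly where projective normality enters, through $S(F)=R(F,H)$ and the vanishing of $H^1_{\mathfrak m}$. The cohomological computation itself is routine. Note that the intermediate groups $H^i(F,\mathcal{O}_F)$ for $0<i<n$, which may be nonzero (for instance for hyperkähler $F$), never enter at $m=n+1$, so no hypothesis on them is needed.
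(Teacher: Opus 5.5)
Your proposal is correct and follows the paper's proof: both reduce via $S(F)=R(F,H)$ and Proposition \ref{reg0} to computing $\operatorname{reg}(\mathcal{O}_F)$, get $(n+1)$-regularity from Kodaira vanishing with $\omega_F\simeq\mathcal{O}_F$ (the paper cites Theorem \ref{vanishing}), and get the lower bound from $H^n(F,\mathcal{O}_F)\simeq\mathbb{C}$ via Serre duality. You also make explicit two steps the paper leaves implicit: the appeal to Mumford's lemma in the lower bound, and the fact that projective normality is what makes the module and sheaf regularities agree.
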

\begin{proof}
  Since $H$ is normally generated, the homogeneous coordinate ring of $F$, $S(F)$ is equal to $R(F,H)$. Hence, by Proposition \ref{reg0}, $\operatorname{reg}(R(F,H))= \operatorname{reg} \OO_F$. By Theorem \ref{duality} (Serre's duality), $H^n(F, \OO_F) = H^0(F, \OO_F)=\mathbb{C}$, hence $\operatorname{reg} \OO_F \geq n+1$. On the other hand, $H^i(F, \OO_F(n+1-i))=0 \text{ for every $i >0$ }$ by Theorem \ref{vanishing}, concluding our assertion.

\end{proof}

\begin{thm} \label{trivialcanonical} Let $F$ be a smooth simply connected projective variety with  $n:=\dim F \geq 3$ and $H$ a normally generated line bundle on $F$.  Assume that $\omega_F\simeq\mathcal{O}_F$. For $d>n+2$, let $X \in |H^{\otimes d}|$ a smooth projective variety. Then
$$\Aut(X)\simeq \Aut(F;X) \simeq\Aut(\mathbb{P}^N;X),$$ 
 where $i \colon F \hookrightarrow \mathbb{P}^N$ is the embedding induced by the complete linear system $|H|$.  
    
\end{thm}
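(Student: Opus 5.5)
The plan is to deduce the statement from Corollary \ref{cor:main}, with the case $\omega_F\simeq\mathcal{O}_F$ (so $r=0$). That corollary requires $F$ smooth, simply connected, of dimension at least $3$, with $H$ normally generated. All of this is in our hypotheses. It also requires $X\in|H^{\otimes d}|$ smooth with $d>t_1(R(F,H))$ and $d\neq -r=0$. The condition $d\neq 0$ is automatic since $d>n+2>0$. So the whole argument reduces to the numerical bound
\[
t_1(R(F,H))\leq n+2 .
\]
Once we have it, the hypothesis $d>n+2$ gives $d>t_1(R(F,H))$, and Corollary \ref{cor:main} yields $\Aut(X)\simeq\Aut(F;X)\simeq\Aut(\mathbb{P}^N;X)$.

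To get the bound on $t_1$, I would use the inequality $t_1(R/I)\leq \operatorname{reg}(R/I)+1$ recorded after Definition \ref{t0}. I would combine it with Proposition \ref{reg}, which gives $\operatorname{reg}(R(F,H))=n+1$ for a smooth $n$-dimensional variety with trivial canonical bundle and normally generated $H$. Here we use that $R(F,H)\simeq S(F)=\mathbb{C}[T_0,\dots,T_N]/I_F$ because $H$ is normally generated, so these invariants are those of the homogeneous coordinate ring. This gives $t_1(R(F,H))\leq n+2$ directly.

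The inequality $t_1\leq\operatorname{reg}+1$ is immediate, since $\beta_{1,j}\neq0$ forces $j-1\leq\operatorname{reg}$. The only step needing care is the regularity computation in Proposition \ref{reg}, which I am allowed to assume. I would check that its normalization matches this use: $\operatorname{reg}$ of the algebra, with $\operatorname{reg}(I_F)=\operatorname{reg}(R/I_F)+1$ by Proposition \ref{reg0}. This means the generators of $I_F$ live in degrees at most $\operatorname{reg}(I_F)=n+2$. That agrees with the bound above. If the normalization were off by one, one would instead bound $t_1$ by $\operatorname{reg}(\mathcal{I}_F)$, which is $n+2$ either way. The conclusion $d>n+2$ suffices in both readings.

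Finally, I would note that $F\neq\mathbb{P}^N$ here, because $\omega_F$ is trivial. Hence we are in the branch of the proof of Theorem \ref{thm:main} where $\beta_{1,1}(R(F,H))=0$, and no special convention for $t_1$ is needed.
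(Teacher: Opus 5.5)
Your proposal is correct and follows the paper's proof. The paper likewise bounds $t_1(R(F,H))\le \operatorname{reg}(R(F,H))+1=n+2$ using Proposition \ref{reg}, then applies Corollary \ref{cor:main} with $\omega_F\simeq\mathcal{O}_F$ (so $r=0$); your checks that $d\neq -r$ holds automatically and that $F\neq\mathbb{P}^N$ are points the paper leaves implicit.
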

\begin{proof}
We recall that $t_1(R(F,H)) \leq \operatorname{reg}(R(F,H))+1 $
then it follows from Corollary \ref{cor:main}  and Proposition \ref{reg}.
\end{proof}

We note that the equality $t_1(R(F,H)) \leq \operatorname{reg}(R(F,H))+1 $ is not always sharp. An example of this behavior is our computation for a specific family of Calabi--Yau varieties,  Wehler varieties, that will be presented in the next section. Another example is given by the following result of Rios-Ortiz, Rojas, and Song \cite{ORS}. They follow the notation introduced by Debarre in \cite{Deb22}, where
$M_{2d}^{(\gamma)}$ denotes
the moduli space of polarized fourfolds $(F,H)$ of $K3^{[2]}$-type satisfying $q(H)=2d$ and divisibility $\gamma$ with respect to the  Beauville--Bogomolov--Fujiki form $q$ on $H^2(F,\mathbb{Z})$.

\begin{thm}\cite[Theorem E]{ORS}

\begin{enumerate}
    \item For a general polarized hyperk\"ahler fourfold $(F,H)\in M_4^{(1)}$, the embedding
    \[
    F\hookrightarrow
    \mathbb{P}\bigl(H^0(F,H)^\vee\bigr)=\mathbb{P}^9 \text{ is projectively normal.}
    \]
    In particular, the homogeneous ideal of $F$ in $\mathbb{P}^9$ is generated by $10$ cubics and $20$ quartics.

    \item For a general polarized hyperk\"ahler fourfold $(F,H)\in M_6^{(1)}$, the embedding \[F\hookrightarrow
    \mathbb{P}\bigl(H^0(F,H)^\vee\bigr)=\mathbb{P}^{14} \text{ is projectively normal.}
    \]
    The homogeneous ideal of $F$ in $\mathbb{P}^{14}$ is generated by $15$ quadrics and at most $55$ cubics. 
\end{enumerate}

\end{thm}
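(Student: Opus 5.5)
The statement is Theorem E of \cite{ORS}, which the paper quotes rather than proves. I will therefore only outline how I would establish it for a general member of each moduli space. In both cases the plan is to produce one explicit $K3^{[2]}$-type fourfold with the given polarization, verify the properties there, and spread them by semicontinuity.

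For $M_4^{(1)}$ I would use the Iliev--Manivel model: $H^0(F,H)$ has dimension $\chi(H)=\binom{q(H)/2+3}{2}=10$, and the general member is a double EPW-type cover, equivalently related to a Debarre--Voisin-type construction. For $M_6^{(1)}$, $h^0=\binom{6}{2}=15$.

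\textbf{Step 1: normal generation.} I need the maps $S^mH^0(F,H)\to H^0(F,H^m)$ to be surjective for all $m$. Since $H^i(F,H^m)=0$ for $i>0$ and $m>0$ (Kodaira, with $\omega_F$ trivial), the Riemann--Roch polynomial $\chi(H^m)=\binom{q(H)m^2/2+3}{2}$ gives the dimensions on the target side. I would prove surjectivity for small $m$ on the special model, for instance by a Macaulay2 computation or a geometric description. For large $m$ I would use Castelnuovo--Mumford regularity: $\mathcal{O}_F$ is $5$-regular as in Proposition \ref{reg}, so multiplication is surjective once $m\geq 5$, and only the finitely many degrees $m\le 4$ need checking. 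Surjectivity in each degree is an open condition on the family, so it then holds for the general member.

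\textbf{Step 2: generators of $I_F$.} Here I would compute the Hilbert function $h_{I}(m)=\binom{m+N}{N}-\chi(H^m)$.
\begin{itemize}
\item For $M_4^{(1)}$, with $N=9$: $h_I(2)=55-\binom{7}{2}=55-21=34$ would give quadrics. The claim that there are none means $H^0(\mathcal{O}(2))\to H^0(H^2)$ is injective; I need to recheck this count, since $\chi(H^2)=\binom{2\cdot4+3}{2}=55$. Then the degree-3 count gives $10$ cubics.
\item For $M_6^{(1)}$: $h_I(2)=120-\binom{15}{2}=15$, giving $15$ quadrics.
\end{itemize}
The exact numbers of minimal generators, $\beta_{1,j}$, come from the Koszul cohomology $K_{1,j}(F,H)$. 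By regularity these vanish for $j\ge 5$, that is $t_1\le \operatorname{reg}+1$, so only $j\le4$ remain. I would compute $K_{1,3}$ and $K_{1,4}$ on the special model using Green's duality and the vanishing of the relevant $K_{p,q}$. Upper semicontinuity of graded Betti numbers in flat families then yields \emph{at most} $55$ cubics for the general member, which matches the wording of the statement.

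\textbf{Main obstacle.} The hard step is exhibiting a special fourfold that is explicit enough to compute its Betti tables but still lies in the right component, with divisibility $\gamma=1$. I would first try to realize it as the Hilbert square of a K3 surface of genus $3$ or $4$, or in a degenerate form via an EPW sextic. I would then use Lefschetz-type and Koszul arguments, namely Voisin's theorem on syzygies of K3 surfaces, to deduce the vanishings needed for $F$.
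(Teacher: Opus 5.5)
The paper does not prove this statement. It quotes \cite[Theorem E]{ORS} as a black box for Corollary \ref{hyperkahler}, so your sketch has to stand on its own. Your strategy is sensible: check everything on one member, then spread by openness of surjectivity and upper semicontinuity of graded Betti numbers over the irreducible moduli space. However, the decisive input, a member on which everything is actually verified, is missing, and every candidate you name fails for $M_4^{(1)}$.

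\textbf{The special member for $M_4^{(1)}$.}
\begin{itemize}
\item Double EPW sextics are the general members of $M_2^{(1)}$, and Debarre--Voisin fourfolds lie in $M_{22}^{(2)}$. Moreover, if $\phi_H$ were a double cover, $H$ would not be very ample, contradicting the statement.
\item For $H=L_2-\delta$ on $S^{[2]}$ one has $q(H)=L^2-2$. So genus $3$ gives $q=2$, and $q=4$ requires genus $4$.
\item In genus $4$, $S=Q\cap C\subset\mathbb{P}^4$. Every line in the quadric threefold $Q$ meets $S$ in a length-$3$ scheme, so $S^{[2]}\to G(2,5)$ identifies the three pairs on each such line. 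Thus $H$ is ample but not very ample, hence not normally generated by Remark \ref{ngisvample}, and there is nothing to spread out.
\end{itemize}
So part (1) is left without its key input. For $M_6^{(1)}$ your framework does have a natural candidate: $S^{[2]}\subset G(2,6)\subset\mathbb{P}^{14}$ for a general genus-$5$ K3, which has no trisecant lines. This model accounts for the numbers. Once normal generation holds in degree $2$, the $15$ quadrics are the Plücker quadrics, with $\beta_{2,3}=35$. Then $h_I(3)=680-435=245$ leaves $245-(225-35)=55$ minimal cubics, which gives ``at most $55$'' for the general member by semicontinuity. But projective normality of this model is exactly what you have not proved.

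\textbf{The reduction to degree $\le 4$ is wrong.}
\begin{itemize}
\item Proposition \ref{reg} gives $\operatorname{reg}R(F,H)=5$, hence only $t_1\le 6$. This is precisely the bound $d>n+2$ of Theorem \ref{trivialcanonical}, and it is why the paper needs \cite{ORS} at all.
\item The indexing is $\beta_{1,j}=\dim K_{1,j-1}(F,H)$, not $K_{1,j}$. So beyond cubics and quartics you must still kill $K_{1,4}$ and $K_{1,5}$.
\item One way to do this is Green duality: $K_{1,5}^\vee\cong K_{N-5,0}=0$ and $K_{1,4}^\vee\cong K_{N-5,1}$, where the needed vanishings hold because $h^{0,3}(F)=0$. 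Green's $K_{p,1}$-theorem then has to rule out $F$ lying on a fivefold of minimal degree.
\item ``$20$ quartics'' is not a pure Hilbert-function count. From $h_I(4)=715-595=120$ you get $20+\beta_{2,4}$ quartic generators, so you must also show the ten cubics have no linear syzygies.
\item Step 1 has the same off-by-one. $5$-regularity of $\mathcal{O}_F$ gives surjectivity of $H^0(H^{k})\otimes H^0(H)\to H^0(H^{k+1})$ only for $k\ge5$, so degrees up to $5$, not $4$, must be checked directly.
\end{itemize}
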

\begin{cor} \label{hyperkahler}
Let $(F,H)$ be general polarized hyperk\"ahler fourfold in $M_{2m}^{(1)}$, and consider $ X \in |H^{\otimes d}| $ smooth projective variety.

\begin{enumerate}
\item If $m=2$, then for every $d>4$,
$$ \Aut(X) \simeq \Aut(F; X) \simeq \Aut(\mathbb{P}^9; X).$$
\item If $m=3$, then for every $d>3$,
 Then, $$ \Aut(X) \simeq \Aut(F; X) \simeq \Aut(\mathbb{P}^{14}; X).$$
\end{enumerate}
 
\end{cor}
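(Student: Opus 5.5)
The plan is to deduce the corollary directly from Corollary \ref{cor:main}, using the preceding result of Rios-Ortiz, Rojas and Song to supply both normal generation and the invariant $t_1$. A general polarized hyperk\"ahler fourfold $(F,H)$ of $K3^{[2]}$-type is a smooth projective variety of dimension $4\geq 3$. It has trivial canonical bundle $\omega_F\simeq\mathcal{O}_F$, since it carries a holomorphic symplectic form, and it is simply connected by definition of irreducible holomorphic symplectic manifold. So the ambient hypotheses of Corollary \ref{cor:main} hold in the first alternative, $\omega_F\simeq \mathcal{O}_F$. In that alternative we may take $r=0$, and the condition $d\neq -r$ is automatic for $d>0$.

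Next I would check that $H$ is normally generated and compute $t_1(R(F,H))$.

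\textbf{Normal generation.} Part (1), resp.\ (2), of \cite[Theorem E]{ORS} says the embedding by $|H|$ is projectively normal. By Remark \ref{veryampleN0} and the discussion of Section \ref{2}, this is equivalent to $H$ being normally generated, and then $S(F)\simeq R(F,H)$. In particular $R(F,H)\simeq \mathbb{C}[T_0,\dots,T_N]/I_F$ with $I_F$ the homogeneous ideal of $F\subset\mathbb{P}^N$.

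\textbf{The invariant $t_1$.} By definition, $t_1(R(F,H))$ is the maximal degree of a minimal homogeneous generator of $I_F$.
\begin{enumerate}
\item For $m=2$, $I_F$ is generated by $10$ cubics and $20$ quartics, so $t_1\leq 4$. Equality holds because the minimal generating set contains quartics (taking the statement of \cite{ORS} as describing a minimal set); only the inequality is needed anyway.
\item For $m=3$, $I_F$ is generated by quadrics and cubics, so $t_1\leq 3$.
\end{enumerate}
Hence $d>4$ in case (1), resp.\ $d>3$ in case (2), gives $d>t_1(R(F,H))$.

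With these inputs Corollary \ref{cor:main} applies to any smooth $X\in|H^{\otimes d}|$. It yields $\Aut(X)\simeq\Aut(F;X)\simeq\Aut(\mathbb{P}^N;X)$, with $N=9$, resp.\ $N=14$, which is $\dim H^0(F,H)-1$ as stated in \cite[Theorem E]{ORS}.

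The only real point to watch is that the vanishing and invariance inputs behind Corollary \ref{cor:main} hold here.
\begin{itemize}
\item Proposition \ref{h1vanish} needs $H^1(F,\mathcal{O}_F)=0$, which holds for hyperk\"ahler manifolds because they are simply connected.
\item Proposition \ref{prop:Hautinvariant} needs $d\neq -r=0$, which holds.
\end{itemize}
There is no genuine obstacle: the argument is bookkeeping, and the substantive input, the bound on the degrees of the generators, is imported from \cite{ORS}.
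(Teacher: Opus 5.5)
Your proposal is correct and follows the paper's own proof. The paper likewise deduces the corollary directly from Corollary \ref{cor:main}, with \cite[Theorem E]{ORS} supplying both normal generation and the bounds $t_1(R(F,H))\leq 4$ and $t_1(R(F,H))\leq 3$; your extra checks ($\omega_F\simeq\mathcal{O}_F$, simple connectedness, $H^1(F,\mathcal{O}_F)=0$, and $r=0$ so that $d\neq -r$ is automatic) only spell out hypotheses the paper leaves implicit.
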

\begin{proof}
   Consequence of \cite[Theorem E]{ORS} and
Corollary \ref{cor:main}.
\end{proof}
\subsection{Other examples}

\subsubsection{Hypersurfaces in \texorpdfstring{$\mathbb{P}^N$}{P^N}}

\begin{thm}
Let \(F \subset \mathbb{P}^N\) be a smooth hypersurface of degree \(D\), $\dim  F \geq 3$ such that $D\leq N+1$ and let \(X \subset F\) be a hypersurface of degree \(d > D\) and $d \neq N+1-D$. Then \[\Aut(X) \simeq \Aut(F; X) \simeq \Aut(\mathbb{P}^N;X).
\]
\end{thm}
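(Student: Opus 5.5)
The plan is to apply Corollary \ref{cor:main} with $H=\mathcal{O}_F(1)$, so we must check its hypotheses one by one.

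\textbf{Geometry of $F$.} Since $\dim F\geq 3$, the Lefschetz hyperplane theorem (Theorem \ref{homotopy} applied to $F\subset\mathbb{P}^N$) gives $\pi_1(F)=1$, so $F$ is smooth and simply connected. Adjunction gives $\omega_F\simeq \mathcal{O}_F(D-N-1)$, so $r=D-N-1\leq 0$. If $D=N+1$ then $\omega_F\simeq\mathcal{O}_F$. If $D\leq N$ then $F$ is Fano with $r<0$. The exceptional value $d\neq -r=N+1-D$ is exactly the hypothesis of the statement.

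\textbf{Normal generation of $H$.} From $0\to \mathcal{O}_{\mathbb{P}^N}(m-D)\to\mathcal{O}_{\mathbb{P}^N}(m)\to\mathcal{O}_F(m)\to 0$ and $H^1(\mathbb{P}^N,\mathcal{O}(m-D))=0$ (as $N\geq 4$), the restriction $H^0(\mathbb{P}^N,\mathcal{O}(m))\to H^0(F,\mathcal{O}_F(m))$ is surjective for all $m\geq 0$. Hence $\mathcal{O}_F(1)$ is normally generated; this is also Proposition \ref{restrictionNG}(1) with $\mathbb{P}^N$ as ambient. In addition $H^0(F,\mathcal{O}_F(1))=H^0(\mathbb{P}^N,\mathcal{O}(1))$ when $D\geq 2$, so $|H|$ gives exactly the inclusion $F\subset\mathbb{P}^N$.

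\textbf{Computing $t_1$.} We have $R(F,H)=\mathbb{C}[T_0,\dots,T_N]/(G)$ with $\deg G=D$. The minimal resolution is $0\to R(-D)\to R$, so $\beta_{1,j}=0$ for $j\neq D$ and $t_1(R(F,H))=D$. If $D=1$, then $F=\mathbb{P}^{N-1}$ and $t_1=1=D$ by the convention in the Remark before Theorem \ref{thm:main}. Thus $d>D$ is precisely $d>t_1(R(F,H))$.

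\textbf{Conclusion and main obstacle.} Corollary \ref{cor:main} now applies and yields the chain of isomorphisms. The one delicate point is that Corollary \ref{cor:main} asks for $X$ smooth, while the statement only calls $X$ a hypersurface. I would either read "hypersurface" as a smooth member of $|\mathcal{O}_F(d)|$, as elsewhere in the paper, or else verify conditions (1)--(3) of Theorem \ref{thm:main} directly:

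\begin{enumerate}
\item[(1)] The vanishing $H^1(F,\mathcal{O}_F(m-d))=0$ follows for all $m$ from the restriction sequence above, since $H^1(\mathbb{P}^N,\mathcal{O}(k))=H^2(\mathbb{P}^N,\mathcal{O}(k))=0$ for $N\geq 4$.
\item[(2)] The $\Aut(X)$-invariance of $H|_X$ is where smoothness, or generality via Proposition \ref{prop:singhigherdim}, is genuinely needed. It comes through Proposition \ref{prop:Hautinvariant}, using $\omega_X=\mathcal{O}_X(D-N-1+d)$ with nonzero exponent.
\item[(3)] This follows from Lemma \ref{lem:autFXFH}.
\end{enumerate}
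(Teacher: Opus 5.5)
Your proposal is correct and is essentially the paper's proof: the paper also identifies $R(F,\mathcal{O}_F(1))\simeq\mathbb{C}[T_0,\dots,T_N]/(f)$ (via Propositions \ref{coxring} and \ref{h1vanish}, which is your restriction sequence), so that $t_1=D$, and then applies Corollary \ref{cor:main}, tacitly taking $X$ smooth just as you point out. Your observation that $|\mathcal{O}_F(1)|$ recovers the given embedding only for $D\geq 2$ is a useful refinement: for $D=1$ the corollary gives $\Aut(\mathbb{P}^{N-1};X)$, whereas $\Aut(\mathbb{P}^N;X)$ also contains every perspectivity with axis the hyperplane $F$ and is strictly larger, so in that case the statement must be read with $\mathbb{P}^{N-1}$ as the ambient space.
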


\begin{proof}
As a consequence of the Proposition \ref{coxring} and Proposition \ref{h1vanish} the section ring of \(\mathcal{O}_F(1)\) is isomorphic, as a graded ring, to \[
\mathbb{C}[T_0, \dots, T_N]/(f),
\]
where \(f\) is the defining equation of \(F\). The result follows by applying Corollary \ref{cor:main} to the inclusion \(X \subset F\).
\end{proof}

\begin{rmk}
The finiteness of \(\Aut(X)\) for complete intersections is already known (see \cite{Autcompleteintersections}). Moreover, for a general \(X\), one has \(\Aut(X)=\{1\}\) (see \cite{Automorphismandcohomology}, Theorem 1.3 and Remark 1.4).
\end{rmk}

We now compute an upper bound to $t_1(R(F,H))$ in the case that $F$ is a particular type of Calabi--Yau, namely a Wehler variety.
Wehler varieties have appeared in the literature in several contexts. For example, Cantat and Oguiso \cite{OguisoCantat} showed that they provide the first family of Calabi--Yau varieties for which $\text{Bir}(X)$ is infinite and the Kawamata-Morrison movable cone conjecture is satisfied. More recently, Filip, Lesieutre and Tosatti \cite{volume}
use them to describe some unexpected behaviour of  the volume function of a divisor.

\subsubsection{Wehler Varieties}

\begin{df} A Wehler variety is a smooth hypersurface $F \subset (\mathbb{P}^1)^{n+1}$ of degree $(2, \dots,2)$.
\end{df}

\begin{prop} \label{Wehler} Consider $F$ to be Wehler of dimension $n$. Then, $t_1(R(F,\OO_F(1,\dots,1))) =2 $.  
\end{prop}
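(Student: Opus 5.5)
The plan is to compute the ideal of $F$ in the Segre embedding explicitly. The line bundle $\OO_F(1,\dots,1)$ is the restriction of $\OO_{\mathbb{P}}(1,\dots,1)$ on $P:=(\mathbb{P}^1)^{n+1}$. By Table entry 1 this bundle is normally presented on $P$: the Segre embedding $\sigma\colon P\hookrightarrow\mathbb{P}^{2^{n+1}-1}$ is projectively normal and its ideal $I_P$ is generated by quadrics (the $2\times 2$ minors). The section ring $R(P,\OO(1,\dots,1))=\bigoplus_m H^0(P,\OO(m,\dots,m))$ is the Segre ring $S_P$.

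First I will show that $R(F,\OO_F(1,\dots,1))\simeq S_P/(w)$, where $w\in H^0(P,\OO(2,\dots,2))=(S_P)_2$ is the defining equation of $F$. Apply Proposition \ref{coxring} with $d=2$ and $H=\OO(1,\dots,1)$ on $P$; this requires
\[
H^1(P,\OO(m-2,\dots,m-2))=0 \quad \text{for all } m\geq 0.
\]
By Künneth, $H^1$ is a sum of products where exactly one factor is $H^1(\mathbb{P}^1,\OO(m-2))$ and the others are $H^0(\mathbb{P}^1,\OO(m-2))$. For $m\ge 1$ we have $H^1(\mathbb{P}^1,\OO(m-2))=0$. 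For $m=0$ every $H^0(\mathbb{P}^1,\OO(-2))$ vanishes, and since $n+1\geq 2$ there is at least one $H^0$ factor, so the group vanishes. The element $w$ is a nonzero divisor because $S_P$ is a domain. Alternatively, this vanishing is Proposition \ref{h1vanish} applied to the Fano toric variety $P$ with $\omega_P=\OO(-2,\dots,-2)$. Combining this with Proposition \ref{restrictionNG}, $\OO_F(1,\dots,1)$ is normally generated.

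Next, write $S_P=\mathbb{C}[T_0,\dots,T_N]/I_P$ and lift $w$ to a quadric $\tilde w$. Then
\[
R(F,\OO_F(1,\dots,1))\simeq \mathbb{C}[T_0,\dots,T_N]/(I_P+(\tilde w)),
\]
so $I_F=I_P+(\tilde w)$ is generated in degree $2$, which gives $t_1\le 2$. For the reverse inequality, note that $F$ is nondegenerate: its restriction map in degree $1$ is an isomorphism by the degree-one part of the sequence in Proposition \ref{coxring}. Hence $I_F$ has no linear forms. It is also nonzero in degree $2$, since $\tilde w\notin I_P$ (and $I_P\neq 0$ when $n\ge 1$). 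Therefore $t_1=2$.

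I expect the main obstacle to be the bookkeeping of identifying the section ring quotient with the ideal sum. One must check that a minimal generating set of $I_F$ consists of degree-$2$ elements; this follows because any generating set of $I_P+(\tilde w)$ in degree $2$ contains a minimal one. The cohomology vanishing at $m=0$ is the only degree needing care, and it is handled above.
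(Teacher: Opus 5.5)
Your proposal is correct and follows essentially the same route as the paper. You use the $H^1$-vanishing on $(\mathbb{P}^1)^{n+1}$ to invoke Proposition \ref{coxring}; the quadratic generation of the Segre ideal together with the quadric lift $\tilde w$ gives $t_1\le 2$, and nondegeneracy gives $t_1\ge 2$. The differences are minor: you verify the vanishing by K\"unneth rather than by Kodaira vanishing and Serre duality, and you explicitly check $I_F\neq 0$, which the paper's nondegeneracy step uses without saying so.
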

\begin{proof}
We denote $\mathbb{P}:=(\mathbb{P}^1)^{n+1}$, to simplify our notation. Let $s \in H^0(\mathbb{P}, \OO_{\mathbb{P}}(2, \dots,2))$.For $m >0$, we can apply Kodaira vanishing, \[
H^1\!\left(\mathbb{P},
\mathcal{O}(m-2,\dots,m-2)\right) = 0 \text{, since } K_{\mathbb{P}}=\mathcal{O}(-2,\dots,-2). \]
For $m=0$, $H^1\!(\mathbb{P}, K_{\mathbb{P}}) = 0$, since $\mathbb{P}$ is Fano, as a consequence of Kodaira vanishing and Serre's duality.
By Proposition \ref{coxring}, 
$$
\bigoplus_{m \ge 0}H^0\!\left(  F,\mathcal{O}_{F}(m,\dots,m)\right)
\simeq
\frac{ \bigoplus_{m \ge 0}
H^0\!\left(\mathbb{P},
\mathcal{O}_{\mathbb{P}}(m,\dots,m)\right)
}{
(s) 
}$$
Since $\OO_{\mathbb{P}}(1, \dots, 1)$ is a normally presented line bundle by results on Table \ref{N1table}
$$A:=\bigoplus_{m \geq 0} H^0(\mathbb{P},\,
\mathcal{O}_{\mathbb{P}}(m,\dots,m))\big)
\xrightarrow[\simeq]{\alpha} \frac{\mathbb{C}[z_0, \dots, z_{N}]}{I_{SV}},
$$

\noindent where $\deg z_i =1$ and $I_{SV}$ is generated by degree 2 elements and $\alpha$ is the graded isomorphism induced by the Segre embedding $\mathbb{P}^1\times \dots \times \mathbb{P}^1 \xhookrightarrow{|O(1,\dots,1)|}\mathbb{P}^{N}$.
As $J = (s)$ is an ideal of $A$ generated by  an element of degree $2$, and $\alpha$ is an isomorphism of graded rings, we have that the degree of the generators of $\alpha(J)$ is  $2$.
Denoting by $I \subset \mathbb{C}[z_0, \dots, z_{N}] $ the ideal such that $\alpha(J) = I/I_{SV}$, we conclude that 
\[
\bigoplus_{m \geq 0}
H^0\!\left(F,\mathcal{O}_{F}(m,\dots,m)\right) \simeq
\frac{\frac{\mathbb{C}[z_0, \dots, z_{N}]}{I_{SV}}}{ \alpha(J) }
\simeq
\frac{\mathbb{C}[z_0, \dots, z_{N}]}{I}
\]

\noindent where  the maximum degree of the generators of $I$ is 2. Since $F$ is embedded in $\mathbb{P}^N$ by complete linear system $|\OO_F(1,\dots,1)|$, its image in $\mathbb{P}^N$ is nondegenerate, hence $t_1(R(F, \OO_F(1,\dots,1)))>1$, concluding our assertion.
\end{proof}

\begin{cor} \label{Wehler2}
Let $F$ be a Wehler variety with $\dim F  \geq 3$. For
$d>2$, let $X \in |\OO_F(d,\dots,d)|$ a smooth member. Then: $$\Aut(X)\simeq \Aut(F;X) \simeq\Aut(\mathbb{P}^N;X),$$ 
 where $i \colon F \hookrightarrow \mathbb{P}^N$ is the embedding induced by the complete linear system $|\OO_F(1,\dots,1)|$.    
\end{cor}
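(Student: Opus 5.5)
The plan is to deduce the statement from Corollary \ref{cor:main}, applied with $F$ the Wehler variety and $H=\OO_F(1,\dots,1)$. There are five hypotheses to check: $F$ is smooth and simply connected with $\dim F\geq 3$; $\omega_F\simeq\OO_F$; $H$ is normally generated; $d>t_1(R(F,H))$; and $d\neq -r$. Here $r=0$ because $\omega_F$ is trivial, so the last condition holds automatically for $d>0$.

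\textbf{Triviality of $\omega_F$.} Write $\mathbb{P}=(\mathbb{P}^1)^{n+1}$, so that $K_{\mathbb{P}}=\OO(-2,\dots,-2)$. Since $F\in|\OO_{\mathbb{P}}(2,\dots,2)|$ is smooth, adjunction (Theorem \ref{Gorenstein}) gives $\omega_F\simeq\OO_F$.

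\textbf{Simple connectivity.} The divisor $F$ is ample on $\mathbb{P}$, which is simply connected. Theorem \ref{homotopy} applies with $\dim\mathbb{P}=n+1\geq 4$, so $\pi_1(F)\simeq\pi_1(\mathbb{P})=1$.

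\textbf{Normal generation of $H$.} $\OO_{\mathbb{P}}(1,\dots,1)$ is normally generated; it is even normally presented, by entry 1 of the table. The vanishing $H^1(\mathbb{P},\OO(m-2,\dots,m-2))=0$ for all $m\geq 0$ was already established in the proof of Proposition \ref{Wehler}. This is exactly the hypothesis of Proposition \ref{restrictionNG}(2) with $d=2$ (also the degree used in Proposition \ref{coxring}), so $H=\OO_{\mathbb{P}}(1,\dots,1)|_F$ is normally generated.

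\textbf{The bound on $t_1$ and conclusion.} Proposition \ref{Wehler} gives $t_1(R(F,H))=2$, so the hypothesis $d>2$ is precisely $d>t_1(R(F,H))$. Corollary \ref{cor:main} then yields $\Aut(X)\simeq\Aut(F;X)\simeq\Aut(\mathbb{P}^N;X)$ for the embedding of $F$ by $|H|$.

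The only step that is not routine is the Lefschetz input. Corollary \ref{cor:main} uses it twice, through Proposition \ref{prop:Hautinvariant} and Lemma \ref{lem:autFXFH}, and both require $\dim F\geq 3$ so that $X$ is simply connected and $\Pic(F)\to\Pic(X)$ is injective. The assumption $\dim F\geq 3$ in the statement guarantees this, so I expect no real obstacle beyond recording these verifications.
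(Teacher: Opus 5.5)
Your proposal is correct and follows the paper's proof: $\OO_F(1,\dots,1)$ is normally generated by Proposition \ref{restrictionNG}, $t_1(R(F,H))=2$ by Proposition \ref{Wehler}, and Corollary \ref{cor:main} then gives the conclusion. You also explicitly check hypotheses that the paper leaves implicit, namely triviality of $\omega_F$ by adjunction and simple connectivity of $F$ by Lefschetz inside $(\mathbb{P}^1)^{n+1}$; these checks are correct.
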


\begin{proof} By Proposition \ref{restrictionNG}, $\OO_F(1,\dots,1)$ is normally generated as a consequence of the normal generation of $\OO_{\mathbb{P}}(1, \dots 1)$. Then, the result follows from Corollary \ref{cor:main} and Proposition \ref{Wehler}.
\end{proof}

\

In the last example of this paper, we compute an upper bound of $t_1(R(F,-K_F))$ 
for Fano hypersurfaces of type $X_{(a_1, \dots,a_k)} \subset \mathbb{P}^{n_1} \times \dots \times \mathbb{P}^{n_k}$. The motivation behind our choice is very simple: many higher-dimensional Fano varieties can be realized in this form.

\subsubsection{Fano hypersurfaces of type $X_{(a_1, \dots,a_k)} \subset \mathbb{P}^{n_1} \times \dots \times \mathbb{P}^{n_k}$}

\begin{prop} \label{hypersurfacesinproducts}
Let $D = \OO_{\mathbb{P}}(a_1,\dots,a_j)$ be a divisor on 
$\mathbb{P} := \mathbb{P}^{n_1}\times \dots \times \mathbb{P}^{n_j}$ with $a_i \ge 0$, at least one $a_i>0$. Suppose that $(\OO_{\mathbb{P}}(D)\otimes K_\mathbb{P})^{-1}$ is ample, and $M:= \left\lceil
\max_{1\leq i\leq j}
\frac{a_i}{n_i+1-a_i}
\right\rceil.$ Then,
$$
t_1(R(D,-K_D)) \leq \max\{2, M\}.$$
\end{prop}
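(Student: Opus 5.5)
Throughout, put $b_i := n_i+1-a_i$ and write $\mathcal{O}(c)$ for $\mathcal{O}_{\mathbb{P}}(c_1,\dots,c_j)$. By adjunction $-K_D=\mathcal{O}(b)|_D$, and the ampleness hypothesis on $(\mathcal{O}_{\mathbb{P}}(D)\otimes K_{\mathbb{P}})^{-1}$ says exactly that $b_i\geq 1$ for all $i$, i.e.\ $a_i\le n_i$. The plan is to present $R(D,-K_D)$ as a quotient of $A:=R(\mathbb{P},\mathcal{O}(b))$ and to bound the degrees of generators of the kernel.

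One cannot apply Proposition \ref{coxring} directly. The section $s$ defining $D$ has degree $a$, which in general is not a multiple of $b$, so $s$ is not a homogeneous element of $A$. I would instead redo its argument by hand. Twisting the ideal sheaf sequence of $D$ by $\mathcal{O}(mb)$ gives
\[
0\to \mathcal{O}(mb-a)\xrightarrow{\cdot s}\mathcal{O}(mb)\to \mathcal{O}_D(mb)\to 0 .
\]
I claim $H^1(\mathbb{P},\mathcal{O}(mb-a))=0$ for all $m\geq 0$. Each entry satisfies $mb_i-a_i\geq -a_i\geq -n_i>-n_i-1$, so on each factor $\mathbb{P}^{n_i}$ only $H^0$ can be nonzero. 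The Künneth formula then kills $H^1$. Hence
\[
R(D,-K_D)\simeq A/J,\qquad J:=\bigoplus_{m\geq0} s\cdot H^0(\mathbb{P},\mathcal{O}(mb-a)),
\]
and $J$ is an ideal of $A$. The same surjectivity of restriction maps, combined with Proposition \ref{restrictionNG}(1) and the normal generation of $\mathcal{O}(b)$ (Table, row 2), shows that $-K_D$ is normally generated. So $t_1$ makes sense and is computed by the polynomial presentation $A\simeq \mathbb{C}[z_0,\dots,z_N]/I_{\mathbb{P}}$.

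Next I would find the generators of $J$ as an $A$-module. We have $J_m\neq 0$ exactly when $mb_i\ge a_i$ for all $i$, i.e.\ when $m\geq \max_i a_i/b_i$, so the smallest such $m$ is $M$. For $m\ge M$ all entries of $mb-a$ are nonnegative. Multiplication
\[
H^0(\mathcal{O}(c))\otimes H^0(\mathcal{O}(b))\to H^0(\mathcal{O}(c+b)),\qquad c\ge 0,
\]
is surjective, since this is surjectivity of multiplication of multihomogeneous polynomials factor by factor. It follows that $J$ is generated by $J_M$. This surjectivity step is the main point to check carefully; I expect it to be the only real obstacle, and it reduces to the elementary statement for a single $\mathbb{P}^{n}$ that monomials of degree $c+b$ are products of monomials of degrees $c$ and $b$.

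Finally, let $I\subset\mathbb{C}[z_0,\dots,z_N]$ be the preimage of $J$, so that $R(D,-K_D)\simeq \mathbb{C}[z_0,\dots,z_N]/I$. Then $I$ is generated by generators of $I_{\mathbb{P}}$, which are quadrics because $\mathcal{O}(b)$ is normally presented (Table, row 2), together with lifts of a basis of $J_M$, which have degree $M$. A minimal generating set is extracted from this one, hence $t_1(R(D,-K_D))\le\max\{2,M\}$.
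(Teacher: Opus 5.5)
Your proposal is correct and follows the paper's proof essentially step by step. You restrict from $R(\mathbb{P},\mathcal{O}(b))$ using the vanishing of $H^1(\mathbb{P},\mathcal{O}(mb-a))$ (you get it from K\"unneth and the cohomology of $\mathbb{P}^{n_i}$, where the paper writes $\mathcal{O}(mb-a)=K_{\mathbb{P}}\otimes L^{\otimes(m+1)}$ and applies Kodaira), find that the first nonzero piece of the kernel is $J_M$, show $J$ is generated by $J_M$, and combine this with the quadratic generation of the Segre--Veronese ideal; your factor-by-factor monomial argument for the surjectivity of $H^0(\mathcal{O}(c))\otimes H^0(\mathcal{O}(b))\to H^0(\mathcal{O}(c+b))$ for $c\geq 0$ supplies a justification that the paper only asserts.
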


\begin{proof}
Let $s \in H^0(\mathbb{P}, \OO_{\mathbb{P}}(D))$ define $D$. Then we have the exact sequence
\[
0 \to \OO_{\mathbb{P}}(-D) \xrightarrow{\cdot s} \OO_{\mathbb{P}} \to \OO_D \to 0.
\]
For $m\geq 0$, tensoring this sequence by $(\OO_{\mathbb{P}}(D)\otimes K_\mathbb{P})^{\otimes -m}$ gives
\[
0 \to \OO_{\mathbb{P}}(-(m+1)D)\otimes K_\mathbb{P}^{\otimes -m}
\xrightarrow{\cdot s}
(\OO_{\mathbb{P}}(D)\otimes K_\mathbb{P})^{\otimes -m}
\to K_D^{\otimes -m} \to 0.
\]

Since $(\OO_{\mathbb{P}}(D)\otimes K_\mathbb{P})^{-1}$ is ample, Kodaira vanishing implies
\[
H^1\left(
\mathbb{P},
(\OO_{\mathbb{P}}(D)^{-1}\otimes K_\mathbb{P}^{-1})^{\otimes (m+1)}
\otimes K_\mathbb{P}
\right)=0
\qquad \text{for }m\geq 0.
\]
Thus,
\[
\bigoplus_{m \geq 0} H^0(D, K_D^{-\otimes m})
\simeq 
\frac{
\bigoplus_{m\geq 0} H^0(\mathbb{P}, 
(\OO_{\mathbb{P}}(D)\otimes K_\mathbb{P})^{\otimes -m})
}{
\bigoplus_{m\geq 0} s \cdot H^0(\mathbb{P},
\OO_{\mathbb{P}}(-(m+1)D)\otimes K_\mathbb{P}^{\otimes -m})
}.
\]

Now,
\[
K_{\mathbb{P}} = \OO_{\mathbb{P}}(-n_1-1,\dots,-n_j-1),
\]
so
\[
\OO_{\mathbb{P}}(-(m+1)D)\otimes K_\mathbb{P}^{\otimes -m}
=
\OO_{\mathbb{P}}\bigl(
m(n_1+1)-(m+1)a_1,\dots,
m(n_j+1)-(m+1)a_j
\bigr).
\]

Therefore,
\[
H^0\left(
\OO_{\mathbb{P}}(-(m+1)D)\otimes K_\mathbb{P}^{\otimes -m}
\right)\neq 0
\]
if and only if
\begin{equation} \label{inequality}
m \geq \frac{a_i}{n_i+1-a_i}
\qquad \text{for all }i.
\end{equation}

Let $M$ be the smallest integer satisfying \eqref{inequality} for all $i$. Then
\[
s \cdot H^0\left(
\mathbb{P},
\OO_{\mathbb{P}}(-(m+1)D)\otimes K_\mathbb{P}^{\otimes -m}
\right)
=
\begin{cases} 
0, & m < M, \\[4pt]
\subset H^0\left(
\mathbb{P},
\OO_{\mathbb{P}}(-mD)\otimes K_\mathbb{P}^{\otimes -m}
\right), & m \geq M.
\end{cases}
\]

We conclude that 
$$J =\bigoplus_{m\ge 0} s \cdot H^0(\mathbb{P}, \OO_{\mathbb{P}}(-(m+1)D)\otimes K_\mathbb{P}^{\otimes -m}) \subset\bigoplus_{m\ge M}  H^0(\mathbb{P}, (\OO_{\mathbb{P}}(D)\otimes K_\mathbb{P})^{\otimes -m}) $$ and so it is an ideal of 
$A:= \bigoplus_{m\ge 0} H^0(\mathbb{P}, (\OO_{\mathbb{P}}(D)\otimes K_\mathbb{P})^{\otimes -m})$.

Moreover,
\[
L:=(\OO_{\mathbb{P}}(D)\otimes K_\mathbb{P})^{-1}
=
\OO_{\mathbb{P}}(-a_1+n_1+1,\dots,-a_j+n_j+1).
\]Since $(\OO_{\mathbb{P}}(D)\otimes K_\mathbb{P})^{-1}$ is ample, we have
\[
-a_i+n_i+1>0
\]
for every $i$.
We have then that the ideal $J$ is generated by degree $M$ elements.

Indeed, since this is a consequence of the surjective of the following maps for
every $m \geq M$

$$ s \cdot H^0(
\mathbb{P},  \OO_{\mathbb{P }}(-D) \otimes L^{\otimes M}
) \otimes H^0(
\mathbb{P},
L^{\otimes (m -M)}
) \to s \cdot H^0(
\mathbb{P},
\OO_{\mathbb{P}}(-D) \otimes L^{\otimes m}).$$

Also, by the second line of the table \ref{N1table}, 
$L$ is a normally presented line bundle. Arguing as in the previous example, we obtain
\[
\bigoplus_{m \ge 0} H^0(D, K_D^{-\otimes m})
\simeq \frac{\mathbb{C}[z_0, \dots, z_n]}{I},
\]
where $\deg z_i=1$ and $I$ is generated by elements of degree $M$ and 2.
\end{proof}

\begin{cor} Let $F$ be a smooth
Fano hypersurface of type $X_{(a_1, \dots,a_k)} \subset \mathbb{P}^{n_1} \times \dots \times \mathbb{P}^{n_k}$. For 
$d > \max\{2,\left\lceil
\max_{1\leq i\leq j}
\frac{a_i}{n_i+1-a_i}
\right\rceil\}
$, let $ X \in|-dK_F|$ a smooth member. Then, 
\[\Aut(X) \simeq \Aut(F; X) \simeq \Aut(\mathbb{P}^N; X).
\]

\end{cor}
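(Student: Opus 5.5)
The plan is to apply Corollary \ref{cor:main} to the polarized variety $(F,H)$ with $H:=-K_F=\omega_F^{-1}$, taking the bound on $t_1$ from Proposition \ref{hypersurfacesinproducts}. There are four things to check: $F$ satisfies the standing hypotheses of Corollary \ref{cor:main}, $H$ is normally generated, $t_1(R(F,H))\le \max\{2,M\}$, and the degree condition $d\neq -r$ holds.

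First, the geometric hypotheses. By adjunction, $\omega_F\simeq(\OO_{\mathbb{P}}(a_1,\dots,a_k)\otimes K_{\mathbb{P}})|_F$, so $-K_F=\OO_F(n_1+1-a_1,\dots,n_k+1-a_k)$. This is ample by the Fano assumption, so $\omega_F\simeq H^{\otimes r}$ with $r=-1<0$. Simple connectedness of $F$ follows from Theorem \ref{homotopy}, since $\mathbb{P}$ is simply connected and $F$ is an ample divisor. This needs $\dim \mathbb{P}\ge 3$ (for $\pi_1$ the homotopy Lefschetz statement only needs $i\le n-2=1$); together with $\dim F\ge 3$, which I will impose as in Corollary \ref{cor:main}, this gives $\pi_1(F)=1$. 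The condition $d\neq -r=1$ is automatic because $d>2$.

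Second, normal generation of $H$. The line bundle $L=\OO_{\mathbb{P}}(n_1+1-a_1,\dots)$ has all entries positive, so it is normally presented by line 2 of Table \ref{N1table}, and in particular normally generated. The proof of Proposition \ref{hypersurfacesinproducts} shows, via Kodaira vanishing on $\mathbb{P}$, that the restriction maps $H^0(\mathbb{P},L^{\otimes m})\to H^0(F,H^{\otimes m})$ are surjective for all $m\ge0$. Part (1) of Proposition \ref{restrictionNG} then shows that $H=L|_F$ is normally generated.

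Third, Proposition \ref{hypersurfacesinproducts} gives $t_1(R(F,-K_F))\le \max\{2,M\}$, so the assumption $d>\max\{2,M\}$ gives $d>t_1(R(F,H))$. Corollary \ref{cor:main} then yields $\Aut(X)\simeq\Aut(F;X)\simeq\Aut(\mathbb{P}^N;X)$, where $\mathbb{P}^N$ is the anticanonical embedding of $F$.

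The step I expect to need the most care is the dimension and simple-connectedness input. The statement as written does not explicitly require $\dim F\ge3$, and Proposition \ref{prop:Hautinvariant} needs it to obtain the $\Aut(X)$-invariance of $H|_X$. I would therefore either add this hypothesis explicitly or, for low dimension, verify the invariance directly from Proposition \ref{prop:torsionfree} and Lemma \ref{lem:autFXFH}. A secondary point is that the bound in Proposition \ref{hypersurfacesinproducts} is stated with index $j$ rather than $k$; I would simply identify the two.
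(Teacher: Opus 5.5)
Your proposal is correct and is exactly the paper's argument: normal generation of $-K_F$ via Proposition \ref{restrictionNG}, the bound $t_1(R(F,-K_F))\le\max\{2,M\}$ from Proposition \ref{hypersurfacesinproducts}, and then Corollary \ref{cor:main}, with your checks of $r=-1$, $d\neq 1$, simple connectedness and the missing hypothesis $\dim F\ge 3$ making explicit what the paper leaves implicit. A few minor caveats: positivity of all $n_i+1-a_i$ is the standing hypothesis of Proposition \ref{hypersurfacesinproducts}, not a consequence of $F$ being Fano (a general divisor of bidegree $(3,1)$ in $\mathbb{P}^1\times\mathbb{P}^3$ is isomorphic to $\mathbb{P}^1\times\mathbb{P}^2$); Theorem \ref{homotopy} applies only when all $a_i>0$ (in general, use that smooth Fano varieties are simply connected); and your low-dimensional fallback would not work, since Lemma \ref{lem:autFXFH} needs $\dim F\ge 3$, and for $\dim F=2$ the curve $X$ has positive genus, so $\Pic(X)$ has torsion and Proposition \ref{prop:torsionfree} does not apply.
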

\begin{proof} By Proposition \ref{restrictionNG},
$-K_F$ is normally generated, so the result follows from Corollary \ref{cor:main}  and Proposition \ref{hypersurfacesinproducts}.
\end{proof}

\address{Dipartimento di Matematica - Università degli Studi di Ferrara, Via Nicolò Machiavelli, 30, 44121 Ferrara FE, Italia. ORCID: 0000-0002-1279-4077.}

\bibliographystyle{plain}

\bibliography{ref}

\end{document}